\renewcommand*{\@opargbegintheorem}[3]{\trivlist
	\item[\hskip \labelsep{\bfseries #1\ #2}] \textbf{(#3)}\ \itshape}
\newcommand{\va}[0]{\mathbf a}
\newcommand{\ve}[0]{\mathbf e}
\newcommand{\vX}[0]{\mathbf X}
\newcommand{\vZ}[0]{\mathbf Z}
\newcommand{\vzero}[0]{\mathbf 0}
\newcommand{\vone}[0]{\mathbf 1}
\newcommand{\mbf}[1]{\mbox{\boldmath$#1$}}
\newcommand{\Cov}[0]{\text{Cov}}
\newcommand{\Var}[0]{\text{Var}}
\newcommand{\calA}[0]{\mathcal{A}}
\newcommand{\calB}[0]{\mathcal{B}}
\newcommand{\calD}[0]{\mathcal{D}}
\newcommand{\calS}[0]{\mathcal{S}}
\newcommand{\E}[0]{\mathds{E}}
\newcommand{\N}[0]{\mathbb{N}}
\newcommand{\R}[0]{\mathbb{R}}
\newcommand{\Id}[0]{\text{Id}}
\newcommand{\sign}[0]{\text{sign}}
\newcommand{\argmax}[0]{\text{argmax}}
\newcommand{\ub}[0]{\underline{b}}
\newcommand{\us}[0]{\underline{s}}
\newcommand{\udelta}[0]{\underline{\delta}}
\newcommand*{\medcup}{\mathbin{\scalebox{1.5}{\ensuremath{\cup}}}}
\newcommand{\Prob}[0]{\mathds{P}}
\renewcommand{\vec}[0]{\text{vec}}
\renewcommand{\le}{\leqslant}
\renewcommand{\ge}{\geqslant}
\newtheorem{thm}{Theorem}[section]
\newtheorem{lem}[thm]{Lemma}
\newtheorem{cor}[thm]{Corollary}
\newtheorem{rmk}{Remark}
\title[Change point inference and identification]{Finite sample change point inference and identification for high-dimensional mean vectors}
\author[M. Yu and X. Chen]{Mengjia Yu and Xiaohui Chen}
\address{Department of Statistics, University of Illinois at Urbana-Champaign,
	725 S. Wright Street, Champaign, IL 61820,
	USA.}
\email{myu17@illinois.edu}
\begin{document}
	
	\begin{abstract}
		Cumulative sum (CUSUM) statistics are widely used in the change point inference and identification. 
		For the problem of testing for existence of a change point in an independent sample generated from the mean-shift model, we introduce a Gaussian multiplier bootstrap to calibrate critical values of the CUSUM test statistics in high dimensions. The proposed bootstrap CUSUM test is fully data-dependent and it has strong theoretical guarantees under arbitrary dependence structures and mild moment conditions. Specifically, we show that with a boundary removal parameter the bootstrap CUSUM test enjoys the uniform validity in size under the null and it achieves the minimax separation rate under the sparse alternatives when the dimension $p$ can be larger than the sample size $n$. 
		
		Once a change point is detected, we estimate the change point location by maximizing the $\ell^{\infty}$-norm of the generalized CUSUM statistics at two different weighting scales corresponding to covariance stationary and non-stationary CUSUM statistics. For both estimators, we derive their rates of convergence and show that dimension impacts the rates only through logarithmic factors, which implies that consistency of the CUSUM estimators is possible when $p$ is much larger than $n$.
		In the presence of multiple change points, we propose a principled bootstrap-assisted binary segmentation (BABS) algorithm to dynamically adjust the change point detection rule and recursively estimate their locations. We derive its rate of convergence under suitable signal separation and strength conditions.
		
		The results derived in this paper are non-asymptotic and we provide extensive simulation studies to assess the finite sample performance. The empirical evidence shows an encouraging agreement with our theoretical results.
	\end{abstract}
	\keywords{High-dimensional data, change point analysis, CUSUM, Gaussian approximation, bootstrap, binary segmentation.}

	\section{Introduction}
	\label{sec:introduction}
	
	This paper studies the problems of change point inference and identification for mean vectors of high-dimensional data in finite samples. High-dimensional data are now ubiquitous in many scientific and engineering fields and data heterogeneity is the rule rather than the exception. A central problem of studying data heterogeneity is to detect structural breaks in the underlying data generation process. Perhaps the two most fundamental questions for abrupt changes are: i) is there a change point in data? ii) if so, when does the change occur? In this work, we consider change point detection and identification for temporally independent data with cross-sectional dependence. Specifically, let $X_1^n = \{ X_1,\dots,X_n \}$ be a sequence of independent random vectors in $\R^p$ generated from the mean-shift model:
	\begin{equation}
	\label{eqn:mean_shifting_model}
	X_i := (X_{i1}, \ldots, X_{ip} )^\top = \mu + \delta_n \vone(i > m) + \xi_i, \qquad i = 1,\dots,n,
	\end{equation}
	where $\mu \in \R^p$ is the population mean parameter, $\delta_n \in \R^p$ is the mean-shift signal parameter, $m$ is the change point location, and $\xi_1,\dots,\xi_n$ are (temporally) independent and identically distributed (i.i.d.) mean-zero noise random vectors in $\R^p$ with common distribution function $F$. Let $\Sigma = \Cov(\xi_1)$ be the unknown noise covariance matrix that is not necessarily diagonal, and thus we allow cross-sectional (sometimes also referred as spatial) dependence among the components $X_{i1},\dots,X_{ip}$ for each $i=1,\dots,n$. Under the mean-shift model, if $\delta_n = 0$ or $m = n$, then $X_1,\dots,X_n$ form a sample of i.i.d.\ random vectors and no change point occurs. In this paper, our first goal is to test for whether or not there is a change point in the mean vectors $\mu_i = \E(X_i)$, i.e., to test for 
	\begin{equation}
	\label{eqn:change_point_mean_test}
	H_0: \delta_n = 0 \quad \text{and} \quad H_1 : \delta_n \neq 0 \text{ and there exists an } m \in \{1,\dots,n-1\},
	\end{equation}
	where the alternative hypothesis $H_1$ is parameterized by the change point signal $\delta_n$ and location $m$. If a change point is detected in the mean vectors (i.e., $H_0$ is rejected), then our second goal is to estimate the change point location $m$. 
	
	For i.i.d.\ Gaussian noise $\xi_i \sim N(0, \Sigma)$, the log-ratio of the maximized likelihoods between $H_{1}$ with a change point at $s = 1,\dots,n-1$ and $H_{0}$ without change point is given by 
	\begin{equation}
	\label{eqn:max_likelihood_ratio}
	\log(\Lambda_s) = Z_n(s)^\top \Sigma^{-1} Z_n(s) / 2,
	\end{equation}
	where
	\begin{equation}
	\label{eqn:cusum_mean_Rp}
	\qquad Z_n(s) = \sqrt{s(n-s) \over n} \Big({1\over s} \sum_{i=1}^s X_i - {1 \over n-s} \sum_{i=s+1}^n X_i \Big)
	\end{equation}
	is a sequence of the normalized mean differences before and after $s$. Then $H_0$ is rejected if $\max_{1 \le s < n} \log(\Lambda_s)$ is larger than a critical value. In literature, $\{Z_n(s)\}_{s=1}^{n-1}$ are often called the {\it cumulative sum} (CUSUM) statistics \citep{csorgohorvath1997}. Note that the log-ratio statistics of the maximized likelihoods in (\ref{eqn:max_likelihood_ratio}) require the knowledge or an estimate of the unknown covariance matrix $\Sigma$. In the high-dimensional setting where $p$ is larger (or even much larger) than $n$, estimation of $\Sigma$ itself becomes a challenging problem. Spectral norm consistency of $\Sigma$ (or the inverse $\Sigma^{-1}$) is possible under additional structural assumptions (such as sparsity or low-rankness) on the covariance matrix \citep{bickellevina2008a,bickellevina2008b,caizhangzhou2010a,caizhou2011a,chenxuwu2013a,BarigozziChoFryzlewicz2017}, which may not hold in practical applications. In contrast, tests based on the CUSUM statistics in (\ref{eqn:cusum_mean_Rp}) do not involve $\Sigma$ and they are more robust to the misspecification on covariance structures. Therefore, this motivates us to study the problems of change point testing and estimation based on the high-dimensional CUSUM statistics.
	
	To build a decision rule for change-point detection, we need to cautiously aggregate the (dependent) random vectors $Z_n(s), s=1,\dots,n-1$. \cite{enikeevaharchaoui2014} considers the change point detection on mean vectors under the mean-shift model (\ref{eqn:mean_shifting_model}) with i.i.d.\ $\xi_i \sim N(0, \sigma^2 I_p)$. They propose the linear and scan statistics based on the $\ell^2$-norm aggregation of the CUSUM statistics and derive the change point detection boundary. \cite{jirak2015} considers the $\ell^\infty$-norm aggregation of the CUSUM statistics and establishes a Gumbel limiting distribution under $H_0$. \cite{jirak2015} also considers the bootstrap approximations to improve the rate of convergence. \cite{wangsamworth2017} considers the estimation in the high-dimensional mean vectors in reduced dimensions by sparse projections and they derive the rate of convergence for estimating the change point location. In the three aforementioned papers, strong structural assumptions (i.e., cross-sectional sparsity in the sense that the components $\{X_{ij}\}_{j=1}^p$ of $X_{i}$ are independent or weakly dependent) are imposed to substantially reduce the intrinsic complexity.  \cite{chofryzlewicz2015} relaxes the sparsity assumption and consider the estimation problem in the (marginal) variances of high-dimensional time series under a multiplicative model. They propose a {\it sparsified binary segmentation} (SBS) performing the $\ell^1$-norm aggregation on a thresholded version of the CUSUM statistics where an additional sparsifying step with a tuning parameter is used to avoid noise accumulation in the aggregation. Since the SBS is sensitive to threshold tuning parameters, \cite{cho2016change} proposes a double CUSUM test procedure sorting the magnitudes of the $p$ components of CUSUM statistics and it may be viewed as a data-driven alternative for selecting the threshold in~\cite{chofryzlewicz2015}.
	
	In this paper, besides some mild moment conditions, we do not make any assumption on the cross-sectional dependence structure of the underlying data distribution. We consider the multivariate CUSUM statistics (\ref{eqn:cusum_mean_Rp}) in the $\ell^\infty$-norm aggregated form: 
	\begin{equation}
	\label{eqn:cusum_mean_L_inf}
	T_n = \max_{\us \le s \le n-\us} |Z_n(s)|_\infty := \max_{\us \le s \le n-\us} \max_{1 \le j \le p} |Z_{nj}(s)|,
	\end{equation}
	where $\us \in [1, n/2]$ is a user-specified {\it boundary removal} parameter. Removing boundary points is necessary in change point detection since the distributions of $|Z_n(s)|_\infty$ that are closer to endpoints are more difficult to approximate because of fewer data points. Then $H_0$ is rejected if $T_n$ is larger than a critical value such as the $(1-\alpha)$ quantile of $T_n$. Under $H_0$, $\{Z_n(s)\}_{s=1}^{n-1}$ is a centered and covariance stationary process in $\R^p$ (i.e., $\E[Z_n(s)] = 0$ and $\Cov(Z_n(s)) = \Sigma$). To approximate the distribution of $T_n$, extreme value theory is a common technique to derive the Gumbel-type limiting distributions \citep{LeadbetterLindgrenRootzen_1983,resnick1987a}. However, even for $p=1$, the convergence rate of maxima of the CUSUM process $\{Z_n(s)\}_{s=1}^{n-1}$ is known to be slow \citep{resnick1987a,hall1991}.
	
	\subsection{Our contributions} 	
	To overcome the fundamental difficulty in calibrating the distribution of $T_{n}$, we consider the bootstrap approximation to the {\it finite sample distribution} of $T_n$ without referring to a weak limit of $\{Z_n(s)\}_{s=1}^{n-1}$. In Section \ref{sec:model}, we propose a Gaussian multiplier bootstrap tailored to the CUSUM test statistics in (\ref{eqn:cusum_mean_Rp}). The proposed test is fully data-dependent and requires no tuning parameter (except for a pre-specified boundary removal parameter $\us$). This is in contrast with the thresholding-aggregation method in \cite{chofryzlewicz2015}, which requires further procedure to select threshold that is not easy to justify. We will show in Section \ref{subsec:validity_bootstrap_cp_test} that the proposed test is a uniformly valid inferential procedure under $H_0$ where $p$ can grow sub-exponentially fast in $n$ {\it and} no explicit condition on the dependence structure among $\{X_{ij}\}_{j=1}^p$ is needed. This is in contrast with \cite{enikeevaharchaoui2014,jirak2015} and \cite{wangsamworth2017} where the components are assumed to be either independent or weakly dependent, and with \cite{chofryzlewicz2015} and \cite{cho2016change} where the dimension can only grow polynomially fast in sample size. Moreover, we will show that, under a mild signal strength condition, our bootstrap CUSUM test is consistent in the sense that the sum of type I and type II errors is asymptotically vanishing \citep[Chapter 6.2]{ginenickl2016}. In addition, the requirement on the signal strength can achieve the minimax separation rate derived in \cite{enikeevaharchaoui2014} under the sparse alternative (i.e., the change occurs only in a few number of components).
	
	If a change point is detected, then we estimate its location by maximizing the $\ell^{\infty}$-norm of the generalized CUSUM statistics (\ref{eqn:generalized_cusum}) at two different weighting scales. The first estimator is based on the covariance stationary CUSUM statistics in (\ref{eqn:cusum_mean_Rp}). In Section \ref{subsec:rate_convergence_cp_location_estimator}, we show that it is consistent in estimating the location at the parametric rate $n^{-1/2}$ (up to logarithmic factors) for sub-exponential observations. The second estimator is a non-stationary CUSUM statistics assigning less weights on the boundary data points. In this case, we show that it achieves the best possible rate of convergence on the order $n^{-1}$ (up to logarithmic factors) under stronger side conditions. In both cases, dimension impacts the rate of convergence only through the logarithmic factors. Thus consistency of the CUSUM location estimators can be achieved when $p$ grows sub-exponentially fast in $n$. 
	
	Our bootstrap change point inference can be naturally extended to handle multiple change points via the generic binary segmentation (BS) technique. Once a change point is claimed by our bootstrap test and located by our estimator, BS continues the same testing and estimation procedure on the segments before and after the change until no further change point can be detected by the bootstrap test (cf.\ Algorithm \ref{alg:multi_cp} in Section \ref{subsec:methodology_mcp}). Thus, the bootstrap CUSUM test can dynamically adjust detection rule during the iterations. We derive the rate of convergence of this {\it bootstrap-assisted binary segmentation} (BABS) for recursively estimating the multiple change points under suitable signal separation and strength conditions.
	
	\subsection{Literature review}
	
	CUSUM statistics \citep{page1955} are originally introduced in the sequential testing problems to distinguish between the in-control hypothesis $\delta_n = 0$ and the out-control mean-shift hypothesis for a {\it given} $\delta_n \neq 0$ in model (\ref{eqn:mean_shifting_model}), aiming to minimize the expected average run length \citep{Lai2001_SS,Walk1945_AoMS,WaldWolfowitz1948_AoMS,page1955,chernoffzacks1964,hinkley1970,WoodallNcube2985_Technometrics,WaldWolfowitz1948_AoMS,Lorden1971_AoMS,qiu2013_SPC}. The current paper uses CUSUM statistics for fixed sampled size tests, as in many other statistical change point testing and estimation works \citep{yao1987_AoS,BrodskyDarkhovsky1993_Springer,csorgohorvath1997,kokoszkaleipus2000,bghk2009,fryzlewicz2014,carlstein1988,loader1996,HarizWylieZhang2007_AoS,harchaoui2010,frick2014, garreau2016,ombaovonsachsguo2005,auehormannhorvathreimher2009,bai1997,bhattacharya1987_JMA,zhang2010, harle2016}.
	There is a recent surge of literature on change point analysis for high-dimensional data: change point detection is considered in~\citep{enikeevaharchaoui2014,jirak2015}, estimation of the number and locations of change points is considered in~\citep{chofryzlewicz2015,jirak2015,wangsamworth2017,cho2016change}, and bootstrap inference is considered in~\citep{cho2016change} (without giving rigorous statistical guarantees).

	Finite sample approximations to the distribution of maxima for sums of {\it independent} mean-zero random vectors in high dimensions are studied in \cite{cck2013,cck2016a}. We highlight that validity of our bootstrap CUSUM test for the change point does not (at least directly) follow the Gaussian and bootstrap approximation results in \cite{cck2013,cck2016a}. The reason is that, in the change-point detection context, the extreme-value type test statistic $T_n$ defined in (\ref{eqn:cusum_mean_L_inf}) is the maximum of a sequence of {\it dependent} random vectors $Z_n(s), s = \us, \dots, n-\us$. Therefore, the distributional approximation results developed in \cite{cck2013,cck2016a} require considerable modifications tailored to the change point analysis. A main technical innovation of this work is that the CUSUM statistics are affine transformations of the independent data points in an augmented space so that we can make use of the high-dimensional Gaussian and bootstrap approximations without overpaying the price of the increased dimensionality in the embedded larger space.

	\subsection{Organization}
	
	The rest of this paper is organized as follows. The bootstrap change point test, change point location estimators and extension to multiple change points algorithm are described in Section \ref{sec:model}. In Section~\ref{sec:main_results}, we derive the size validity and power properties of the bootstrap test, the rate of convergence for two change point location estimators and the consistency of BABS. Section~\ref{sec:simulation} and \ref{sec:real_app} provide extensive simulation results and two real data examples, respectively. Discussions on detailed comparisons with literature, proofs of the main results, and additional simulation results are given in the Supplementary Material (SM). 
	
	\subsection{Notation}
	
	For $q > 0$ and a generic vector $x \in \R^p$, we denote $|x|_q = (\sum_{i=1}^p |x_i|^q)^{1/q}$ for the $\ell^q$ norm of $x$ and we write $|x| = |x|_2$. For a random variable $X$, denote $\|X\|_q = (\E|X|^q)^{1/q}$. For $\beta > 0$, let $\psi_\beta(x) = \exp(x^\beta) - 1$ be a function defined on $[0,\infty)$ and $L_{\psi_\beta}$ be the collection of all real-valued random variables $X$ such that $\E[\psi_\beta(|X| / C)] < \infty$ for some $C > 0$. For $X \in L_{\psi_\beta}$, define $\|X\|_{\psi_\beta} = \inf\{C > 0 : \E[\psi_\beta(|X| / C)]  \le 1 \}$. Then, for $\beta \in [1,\infty)$, $\|\cdot\|_{\psi_\beta}$ is an Orlicz norm and $(L_{\psi_\beta}, \|\cdot\|_{\psi_\beta})$ is a Banach space \citep{ledouxtalagrand1991}. For $\beta \in (0, 1)$, $\|\cdot\|_{\psi_\beta}$ is a quasi-norm, i.e., there exists a constant $C(\beta) > 0$ such that $\|X+Y\|_{\psi_\beta} \le C(\beta) (\|X\|_{\psi_\beta} + \|Y\|_{\psi_\beta})$ holds for all $X, Y \in L_{\psi_\beta}$ \citep{adamczak2008}. Let $\rho(X, Y) = \sup_{t \in \R} |\Prob(X \le t) - \Prob(Y \le t)|$ be the Kolmogorov distance between two random variables $X$ and $Y$. We shall use $C_1,C_2,\dots$ and $K_1,K_2,\dots$ to denote positive and finite constants that may have different values. We assume $n \ge 4$ and $p \ge 3$.

	\section{Methodology}
	\label{sec:model}
	
	\subsection{Bootstrap CUSUM test}
	\label{subsec:bootstrap_cp_test}
	
	We first introduce a bootstrap procedure to approximate the distribution of $T_n$. Let $e_1,\dots,e_n$ be i.i.d.\ $N(0,1)$ random variables independent of $X_1^n$. Let $\bar{X}_s^- = s^{-1} \sum_{i=1}^s X_i$ and $\bar{X}_s^+ = (n-s)^{-1} \sum_{i=s+1}^n X_i$ be the left and right sample averages at $s$. Define
	\begin{equation}
	\label{eqn:bootstrap_cusum}
	Z_n^*(s) = \sqrt{n-s \over ns} \sum_{i=1}^s e_i (X_i - \bar{X}_s^-) - \sqrt{s \over n(n-s)} \sum_{i=s+1}^n e_i (X_i - \bar{X}_s^+).
	\end{equation}
	Then the bootstrap test statistic is defined as
	\begin{equation}
	\label{eqn:bootstrap_cusum_mean_L_inf}
	T_n^* = \max_{\us \le s \le n-\us} |Z_n^*(s)|_\infty,
	\end{equation}
	and the $(1-\alpha)$ conditional quantile of $T_n^*$ given $X_1^n$, defined as $q_{T_{n}^{*} \mid X_{1}^{n}}(1-\alpha) = \inf\{ t \in \R : \Prob(T_n^* \le t | X_1^n) \ge 1-\alpha \}$, is used as critical values of the bootstrap test to approximate the quantiles of $T_n$. In particular, for any $\alpha \in (0,1)$, we reject $H_0$ if $T_n > q_{T_{n}^{*} \mid X_{1}^{n}}(1-\alpha)$. Note that the Gaussian multiplier bootstrap test statistic $T_n^*$ and its conditional quantile $q_{T_{n}^{*} \mid X_{1}^{n}}(1-\alpha)$ are {\it computable} since we can draw Monte Carlo samples by simulating the multiplier random variables $e_1,\dots,e_n$ to approximate the distribution of $T_{n}^{*}$. 
		
	\begin{rmk}[Comments on centering terms in the bootstrap CUSUM statistics]
		We may also consider the following version of bootstrap CUSUM statistics
		\[
		\tilde{Z}_n^*(s) = \sqrt{n-s \over ns} \sum_{i=1}^s e_i X_i - \sqrt{s \over n(n-s)} \sum_{i=s+1}^n e_i X_i
		\]
		without left and right centering by $\bar{X}_s^-$ and $\bar{X}_s^+$. It can be shown that the bootstrap CUSUM tests based on $Z_n^*(s)$ and $\tilde{Z}_n^*(s)$ have the same rate of convergence in the size and power analysis (Theorem~\ref{thm:bootstrap_approx_cusum_statistic}, Corollary~\ref{cor:validitiy_bootstrap_cusum_test}, and Theorem~\ref{thm:power_bootstrap_cusum_test}).
		Since $\Cov(Z_n^*(s) | X_{1}^{n}) \le \Cov(\tilde{Z}_n^*(s) | X_{1}^{n})$ as a matrix inequality,
		$\tilde{Z}_n^*(s)$ incurs a larger (conditional) covariance matrix than $Z_n^*(s)$ and it is recommended to use $Z_n^*(s)$ rather than $\tilde{Z}_n^*(s)$.
		\qed
	\end{rmk}
	
	\begin{rmk}[Generalization to time series: a block multiplier bootstrap CUSUM test]
	Since the CUSUM test statistics $Z_{n}(s)$ in (\ref{eqn:cusum_mean_Rp}) can be re-written as a block sum, the Gaussian multiplier bootstrap CUSUM test statistics in \eqref{eqn:bootstrap_cusum} and \eqref{eqn:bootstrap_cusum_mean_L_inf} can be modified to a block version to accommodate the temporal dependence for time series data. Let $M,B$ be positive integers such that $n = M B$. We divide the sample $X_{1}^{n}$ into $B$ blocks of size $M$. In particular, for $b=1,\dots,B$, let $L_{b} = \{(b-1)M+1,\dots,bM\}$ be the $b$-th block indices. Then, for $s=1,\dots,n-1$, we can write $Z_{n}(s)$ in (\ref{eqn:cusum_mean_Rp}) as 
	\[
	Z_{n}(s) = \sqrt{n-s \over ns} \sum_{b=1}^{B} \sum_{i \in L_{b}} X_{i} \vone(i \le s) - \sqrt{s \over n(n-s)} \sum_{b=1}^{B} \sum_{i \in L_{b}} X_{i} \vone(i > s).
	\]
	For any $\alpha \in (0,1)$, we reject $H_{0}$ if the test statistic $T_{n} = \max_{\us \le s \le n-\us} |Z_{n}(s)|_{\infty}$ is larger than a critical value. Since the distributions of $Z_{n}(s)$ under $H_{0}$ and $H_{1}$ for dependent error processes are different from the i.i.d.\ errors, we need to accommodate the dependence in calibrating the distributions of the test statistic $T_{n}$. The idea is to modify the Gaussian multiplier bootstrap $Z_{n}^{*}(s)$ in (\ref{eqn:bootstrap_cusum}) and the bootstrap CUSUM test statistic $T_{n}^{*}$ in (\ref{eqn:bootstrap_cusum_mean_L_inf}) to their block versions. Specifically, to approximate the (finite sample) distribution of $T_{n}$, we use a {\it block Gaussian multiplier bootstrap} tailored to the CUSUM statistics setting. Let $e_{1},\dots,e_{B}$ be i.i.d.\ standard Gaussian random variables. Define
	\[
	Z_{n}^{\sharp}(s) = \sqrt{n-s \over n s} \sum_{b=1}^{B} e_{b} V_{b}^{-}(s) - \sqrt{s \over n (n-s)} \sum_{b=1}^{B} e_{b} V_{b}^{+}(s),
	\]
	where $V_{b}^{-}(s) = \sum_{i \in L_{b}} (X_{i}-\bar{X}_{s}^{-}) \vone(i \le s) \text{ and } V_{b}^{+}(s) = \sum_{i \in L_{b}} (X_{i}-\bar{X}_{s}^{+}) \vone(i > s).$
	Then the distribution of $T_{n}$ is approximated by its bootstrap analog given by $T_{n}^{\sharp} = \max_{\us \le s \le n-\us} |Z_{n}^{\sharp}(s)|_{\infty}$ and we reject $H_{0}$ if $T_{n} > q_{T_{n}^{\sharp} \mid X_{1}^{n}}(1-\alpha)$, where $q_{T_{n}^{\sharp} \mid X_{1}^{n}}(1-\alpha)$ is the $(1-\alpha)$ conditional quantile of $T_{n}^{\sharp}$ given $X_{1}^{n}$. Note that if the block size $M = 1$ (i.e., $B = n$), then $Z_{n}^{\sharp}(s) = Z_{n}^{*}(s)$. Thus the bootstrap CUSUM test statistic for independent sequences is a special case of the block CUSUM test statistic. Generally, larger $M$ is needed for stronger temporal dependence, while this would reduce the effective sample size. Some empirical performance of the block bootstrap CUSUM test is assessed in the SM Section~\ref{subsec:time_series_block_bootstrap_CUSUM_test}.
	\qed
	\end{rmk}


	\subsection{Estimating the change point location under the alternative hypothesis}
	\label{subsec:identification_cp}
	
	If a change point is detected in the mean vectors (i.e., $H_0$ is rejected), then our next goal is to identify the change point location $m$. Specifically, we estimate $t_m = m / n, m=1,\dots,n,$ where the data $X_1^n$ are observed at evenly spaced time points and their index variables are normalized to $[0,1]$. We consider the change point location estimator based on the generalized CUSUM statistics \citep{HarizWylieZhang2007_AoS}:
	\begin{equation}
	\label{eqn:generalized_cusum}
	Z_{\theta,n}(s) = \left[s(n-s) \over n\right]^{1-\theta} \left({1\over s} \sum_{i=1}^s X_i - {1 \over n-s} \sum_{i=s+1}^n X_i \right),
	\end{equation}
	where $0 \le \theta < 1$ is a weighting parameter. Obviously, the CUSUM statistics $Z_n(s)$ in (\ref{eqn:cusum_mean_Rp}) is a special case of $\theta = 1/2$, i.e., $Z_n(s) = Z_{1/2,n}(s)$. Then we estimate $m$ by 
	\begin{equation}
	\label{eqn:cusum_identification}
	\hat{m}_\theta = \argmax_{1 \le s < n} |Z_{\theta,n}(s)|_\infty.
	\end{equation}
	and we use $t_{\hat{m}_\theta} = \hat{m}_\theta / n$ to estimate $t_{m}$. It is seen that, for smaller values of $\theta$, $Z_{\theta,n}(s)$ assigns less weights on the boundary data points. Therefore, if the true change point location is bounded away from the two endpoints, we expect that $t_{\hat{m}_\theta}$ with a smaller weighting parameter can achieve better rate of convergence. For example, if $t_m \in (0,1)$ is fixed and $p=1$, then it is known that the $\{Z_{0,n}(s)\}_{s=1}^{n-1}$ converges weakly to a functional of the Weiner process and the corresponding maximizer $\hat{m}_0$ achieves the best possible rate of convergence of the order $n^{-1}$ \citep{bai1997,HarizWylieZhang2007_AoS}. Instead of considering the whole family of the generalized CUSUM statistics indexed by $\theta \in [0,1)$, we consider two important cases of $\theta = 1/2$ (covariance stationary) and $\theta = 0$ (non-stationary) in this paper. For $\theta = 1/2$, $Z_{1/2,n}(s)$ is related to the proposed bootstrap CUSUM statistics $Z_n^*(s)$ in (\ref{eqn:bootstrap_cusum}) and the log-ratio statistics in (\ref{eqn:max_likelihood_ratio}) under normality with $\Sigma = \sigma^2 \Id_p$. For $\theta = 0$, $Z_{0,n}(s)$ is related to the parametric bootstrap in \citep{jirak2015}.
	
	\begin{rmk}[Comments on the boundary removal]
		\label{rmk:boundary_removal}
		It should be noted that we must remove boundary points to approximate the distribution of $T_n$. If the boundary points are included in the maxima $T_n$ and $T^*_n$, then the conditional distribution of $T^*_n$ (given $X_1^n$) does not provide an accurate approximation to the distribution of $T_n$. Theorem~\ref{thm:bootstrap_approx_cusum_statistic} and \ref{thm:power_bootstrap_cusum_test} provide the precise rate of convergence that characterizes the boundary removal parameter $\us$ to ensure the consistency (in terms of the sum of type I and type II errors) of the bootstrap CUSUM test. On the other hand, the estimation problem in (\ref{eqn:cusum_identification}) does not exclude the endpoints outside $[\us, n-\us]$. However, in practice, if the existence of a change point is not known as a priori and it is decided by a test, then the boundary restriction is implicitly imposed for both testing and estimation in empirical applications \citep{bai1997}. Further discussions on the theoretical choice of $\us$ can be found in Remark~\ref{rem:theoretical_choice_boundary_removal}.
		\qed
	\end{rmk}
	
	\subsection{Bootstrap-assisted binary segmentation (BABS) for multiple change points}
	\label{subsec:methodology_mcp}
	Suppose there are $\nu$ change points $m_0 = 1 < m_1 < \dots < m_\nu < m_{\nu+1} = n$ and consider the following multiple mean-shifts model:
	\begin{equation}
	\label{eqn:model_mcp}
	X_i =  \mu + \sum_{k=1}^\nu \delta_n^{(k)} 1 (i > m_{k}) + \xi_i, \quad i = 1,\dots,n,
	\end{equation}
	where $\delta_n^{(k)} \in \R^p$ are non-zero mean-shift vectors and $\xi_{i}$ are again i.i.d.\ mean-zero random vectors in $\R^{p}$.	Without loss of generality, we may assume $\mu=0$ and $\delta_n^{(0)} = \delta_n^{(\nu+1)} = 0$ such that the mean vectors $\mu_i = \E[X_i]$ are piecewise constant $\mu_{1+m_k} = \dots = \mu_{m_{k+1}} = \sum_{l=0}^k \delta_{n}^{(l)}$. Given a beginning time point $b$ and an ending time point $e$, we can compute the CUSUM statistics on the initial data segment $\{X_i\}_{i=b}^e$:
	\[
	Z_{n,b,e}(s) = \sqrt{(s-b+1)(e-s) \over e-b+1} \Big({1\over s-b+1} \sum_{i=b}^s X_i - {1 \over e-s} \sum_{i=s+1}^e X_i \Big).
	\]
	Note that the normalization in $Z_{n,b,e}(s)$ corresponds to the case $\theta = 1/2$ in~\eqref{eqn:generalized_cusum}. It can be shown that the maximizer of $|\E Z_{n,b,e}(s)|_\infty, s=b, \dots, e$ always occurs at one of the change points $\{m_k, k = 1, \dots, \nu\} \cap [b,e]$ (cf. Lemma~\ref{lem:max_delta_s_location} in the SM Section~\ref{sec:proofs}). Therefore, under multiple change points model~\eqref{eqn:model_mcp}, we can use $Z_{n,b,e}(s)$ to locate one shift in the interval $[b,e]$.  If our bootstrap CUSUM test (calculated based on $\{X_i\}_{i=b}^e$) rejects $H_0$ at the significance level $\alpha$, then $\hat{m}_b^e = \argmax_{s=b, \dots, e} |Z_{n,b,e}(s)|_\infty $ is marked as a change point. 
	Thus, we may recursively apply the binary segmentation to search along the two directions $[b,\hat{m}_b^e]$ and $[\hat{m}_b^e +1, e]$ until no further change point would be detected by the subsequent bootstrap tests. The pseudo-code for our bootstrap-assisted binary segmentation algorithm for multiple change points detection and estimation, referred as BABS($\alpha,b, e$), is summarized in Algorithms \ref{alg:multi_cp}.
	
	\begin{algorithm}[htp]
		\caption{BABS($\alpha,b, e$)}\label{alg:multi_cp}
		\begin{algorithmic}[1]
			
			\IF {$e - b + 1 < 2\us$}
			\STATE {STOP}
			\ELSE
			\STATE $\hat{m}_b^e = \argmax_{s=b, \dots, e} |Z_{n,b,e}(s)|_\infty $
			\IF {our bootstrap CUSUM test concludes the existence of a change in $[b,e]$}
			\STATE {add $\hat{m}_b^e$ to the set of estimated change-points;}
			\STATE {BABS($\alpha, b, \hat{m}_b^e$);}
			\STATE {BABS($\alpha, \hat{m}_b^e +1, e$).}
			\ELSE
			\STATE {STOP}
			\ENDIF
			\ENDIF
			\RETURN{estimated change points.}
		\end{algorithmic}
	\end{algorithm}

	\section{Theoretical results}
	\label{sec:main_results}

	\subsection{Size and power of the bootstrap CUSUM test: single change point}
	\label{subsec:validity_bootstrap_cp_test}
	
	Denote $\Prob_0(\cdot)$ and $\Prob_1(\cdot)$ as the probability computed under $H_0$ and $H_1$, respectively.
	Our first main result (cf.\ Theorem \ref{thm:bootstrap_approx_cusum_statistic}) is to establish finite sample bounds for the (random) Kolmogorov distance between $T_n$ and $T_n^*$: $\rho^*(T_n, T_n^*) = \sup_{t \in \R} |\Prob_0(T_n \le t) - \Prob_0(T_n^* \le t | X_1^n)|.$
	From this, we can derive the asymptotic bootstrap validity for certain high-dimensional scaling limit of $(n, p)$. Particularly, with $\rho^*(T_n, T_n^*) = o_\Prob(1)$, we can show that type I error of the test is asymptotically controlled at the exact nominal level $\alpha \in (0,1)$, i.e., $\Prob_0(T_n > q_{T_{n}^{*} \mid X_{1}^{n}}(1-\alpha)) \to \alpha$ (cf. Corollary~\ref{cor:validitiy_bootstrap_cusum_test}).	
	Let $\ub, \bar{b}, q > 0$. We make the assumptions:
	\begin{enumerate}
		\setlength\itemsep{0em}
		\item[(A)] $\Var(\xi_{ij}) \ge \ub$ for all $j = 1,\dots,p$.
		\item[(B)] $\E[|\xi_{ij}|^{2+\ell}] \le \bar{b}^\ell$ for $\ell=1,2$ and for all $i=1,\dots,n$ and $j=1,\dots,p$.
		\item[(C)] $\|\xi_{ij}\|_{\psi_1} \le \bar{b}$ for all $i=1,\dots,n$ and $j=1,\dots,p$.
		\item[(D)] $\E[\max_{1 \le j \le p} (|\xi_{ij}|/\bar{b})^q] \le 1$ for all $i=1,\dots,n$.
	\end{enumerate}
	Condition (A) is a non-degeneracy assumption. Condition (B) is a mild moment growth condition. Without loss of generality, we may take $\bar{b} \ge 1$. Conditions (C) and (D) impose sub-exponential and uniform polynomial moment requirements on the observations, respectively. Define $\varpi_{1,n} = \Big({\log^7 (np) \over \us}\Big)^{1/6}$ and $\varpi_{2,n} = \Big({n^{2/q} \log^3 (np) \over \gamma^{2/q} \us}\Big)^{1/3}.$
	
	\begin{thm}[{\bf Main result I:} bounds on the Kolmogorov distance between $T_n$ and $T_n^*$ under $H_0$]
		\label{thm:bootstrap_approx_cusum_statistic}
		Suppose $H_0$ is true and assume (A) and (B) hold. Let $\gamma \in (0, e^{-1})$ and suppose that $\log(\gamma^{-1}) \le K \log(pn)$ for some constant $K > 0$. \\
		(i) If (C) holds, then there exists a constant $C > 0$ only depending on $\ub, \bar{b}, K$ such that
		\begin{equation}
		\label{eqn:bootstrap_cusum_subexp}
		\Prob(\rho^*(T_n, T_n^*) \le C \varpi_{1,n}) \geq 1 - \gamma.
		\end{equation}
		(ii) If (D) holds, then there exists a constant $C > 0$ only depending on $\ub, \bar{b}, K, q$ such that
		\begin{equation}
		\label{eqn:bootstrap_cusum_poly}
		\Prob(\rho^*(T_n, T_n^*) \le C \{\varpi_{1,n} + \varpi_{2,n} \}) \geq 1 - \gamma.
		\end{equation}
	\end{thm}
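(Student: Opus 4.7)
The plan is to reduce both $T_n$ and $T_n^*$ to maxima of linear forms in $\xi_1,\dots,\xi_n$ indexed by a common set, apply the high-dimensional Gaussian approximation and Gaussian multiplier bootstrap consistency results of \cite{cck2013,cck2016a}, and combine by the triangle inequality. Under $H_0$, $Z_{nj}(s) = \sum_{i=1}^{n} c_{i,s}\xi_{ij}$, where $c_{i,s} = \sqrt{(n-s)/(ns)}$ for $i\le s$ and $c_{i,s} = -\sqrt{s/(n(n-s))}$ for $i>s$; these coefficients satisfy $\sum_i c_{i,s}^2 = 1$ and $\max_i c_{i,s}^2 \le 1/\us$ uniformly for $s\in[\us,n-\us]$. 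Absorbing the absolute value by a sign $\varepsilon\in\{\pm 1\}$ and vectorizing $k=(s,j,\varepsilon)$ into an index set $\mathcal K$ with $|\mathcal K|\le 2p(n-2\us+1)$, set $V_{i,k} = \varepsilon c_{i,s}\xi_{ij}$ so that $V_1,\dots,V_n$ are independent centered vectors in $\R^{|\mathcal K|}$ and $T_n = \max_{k\in\mathcal K}\sum_i V_{i,k}$. The bootstrap statistic has the parallel form $T_n^* = \max_k \sum_i e_i\widehat V_{i,k}$, where $\widehat V_{i,k}$ replaces $\xi_{ij}$ by $X_{ij}-\bar X_s^-$ for $i\le s$ and by $X_{ij}-\bar X_s^+$ for $i>s$.

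Let $\tilde T_n = \max_k \sum_i \tilde V_{i,k}$ with $\tilde V_i$ independent mean-zero Gaussians satisfying $\Cov(\tilde V_i) = \Cov(V_i)$. Assumption (A) yields $\min_{k}\sum_i \E[V_{i,k}^2] = \min_j \Sigma_{jj} \ge \ub$, while (B) combined with the coefficient bound gives $\max_k\sum_i\E[|V_{i,k}|^{2+\ell}] \le \bar b^{\ell}/\us^{\ell/2}$ for $\ell=1,2$, and (C) gives $\max_{i,k}\|V_{i,k}\|_{\psi_1} \le \bar b/\sqrt{\us}$. Substituting these moment bounds into the sub-exponential rate of \cite{cck2013} — with $\us$ playing the role of the effective sample size because the individual variances and moments of $V_{i,k}$ scale like $1/\us$ rather than $1/n$ — yields $\rho(T_n,\tilde T_n)\le C\varpi_{1,n}$. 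Under the polynomial assumption (D), truncate $\xi_{ij}$ at $\tau_n = \bar b(n/\gamma)^{1/q}$; by Markov the tail event has probability at most $\gamma$, and on its complement the polynomial-moment CCK bound of \cite{cck2016a} applied to the truncated process contributes an additional term of order $\varpi_{2,n}$, so that $\rho(T_n,\tilde T_n) \le C(\varpi_{1,n}+\varpi_{2,n})$ holds with probability at least $1-\gamma$.

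For the bootstrap side, conditional on $X_1^n$, $T_n^*$ is the maximum of a centered Gaussian vector whose conditional covariance $\widehat\Xi$ is the sample analog of $\Xi = \Cov((\sum_i V_{i,k})_k)$ but with the population mean replaced by $\bar X_s^{\pm}$. The Gaussian comparison inequality of \cite{cck2013} (based on Nazarov's anti-concentration) gives
\[
\rho(\tilde T_n, T_n^*\mid X_1^n) \le C\bigl(\|\widehat\Xi - \Xi\|_\infty\log^2(pn)\bigr)^{1/3}.
\]
Bounding $\|\widehat\Xi - \Xi\|_\infty$ uniformly over the $O(p^2 n^2)$ entries via Bernstein's inequality under (C), or Fuk--Nagaev combined with truncation at $\tau_n$ under (D), and using the boundary restriction that each sample mean aggregates at least $\us$ observations, produces $\|\widehat\Xi - \Xi\|_\infty\lesssim\log^{3/2}(pn)/\sqrt{\us}$ under (C) and its polynomial counterpart (with an extra factor $(n/\gamma)^{1/q}$) under (D), each on an event of probability $\ge 1-\gamma$. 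Taking cube roots recovers $\varpi_{1,n}$ and $\varpi_{1,n}+\varpi_{2,n}$ respectively, and the triangle inequality $\rho^*(T_n,T_n^*) \le \rho(T_n,\tilde T_n) + \rho(\tilde T_n, T_n^*\mid X_1^n)$ closes the argument.

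The hardest step is the uniform covariance control: $\widehat\Xi$ lives on a $|\mathcal K|\times|\mathcal K|$ grid with $|\mathcal K|\asymp pn$, and the bootstrap's left/right sample-mean centering introduces cross-dependence between entries indexed by different $s$-values, so the concentration step must be run jointly with the boundary-removal parameter $\us$ that governs the per-mean aggregation size. Under polynomial moments the additional challenge is to balance the truncation level $\tau_n = \bar b(n/\gamma)^{1/q}$ against the tail event, which is precisely what produces the scaling $n^{2/q}/\gamma^{2/q}$ inside $\varpi_{2,n}$, while the same $\us$ appears in the denominator of both rates.
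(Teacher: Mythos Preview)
Your proposal is correct and follows essentially the same two-step strategy as the paper: a Gaussian approximation for $T_n$ via \cite{cck2016a} (the paper's Step~1, with $B_n^2 \asymp n/\us$ playing the role you describe), followed by a Gaussian comparison between $\tilde T_n$ and $T_n^*$ controlled by the sup-norm difference of covariances (the paper's Step~2, handled through the decomposition into $\hat\Delta_1,\dots,\hat\Delta_4$ and Lemma~\ref{lem:hatDelta}, which is exactly your ``uniform covariance control''). One small wording issue: $\rho(T_n,\tilde T_n)$ is a deterministic quantity, so in the polynomial case the paper obtains it directly from the polynomial-moment version of \cite[Proposition~2.1]{cck2016a} rather than via truncation, and the ``with probability $\ge 1-\gamma$'' qualifier attaches only to the covariance-concentration step.
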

	
	Based on Theorem \ref{thm:bootstrap_approx_cusum_statistic}, we have the uniform size validity of the bootstrap CUSUM test.
	
	\begin{cor}[Uniform size validity of the bootstrap CUSUM test]
		\label{cor:validitiy_bootstrap_cusum_test}
		Suppose $H_0$ is true and assume (A) and (B) hold. Let $\gamma \in (0, e^{-1})$ and suppose that $\log(\gamma^{-1}) \le K \log(pn)$ for some constant $K > 0$. \\
		(i) If (C) holds, then there exists a constant $C > 0$ only depending on $\ub, \bar{b}, K$ such that
		\begin{equation}
		\label{eqn:size_validity_bootstrap_cusum_subexp}
		\sup_{\alpha \in (0,1)} |\Prob_0(T_n \le q_{T_{n}^{*} \mid X_{1}^{n}}(\alpha)) - \alpha| \le C \varpi_{1,n} + \gamma.
		\end{equation}
		Consequently,  if $\log^{7}(np) = o(\us)$, then $\Prob_0(T_n \le q_{T_{n}^{*} \mid X_{1}^{n}}(\alpha)) \to \alpha$ uniformly in $\alpha \in (0,1)$ as $n \to \infty$.\\
		(ii) If (D) holds, then there exists a constant $C > 0$ only depending on $\ub, \bar{b}, K, q$ such that
		\begin{equation}
		\label{eqn:size_validity_bootstrap_cusum_poly}
		\sup_{\alpha \in (0,1)} |\Prob_0(T_n \le q_{T_{n}^{*} \mid X_{1}^{n}}(\alpha)) - \alpha| \le C \{ \varpi_{1,n} + \varpi_{2,n} \} + \gamma.
		\end{equation}
		Consequently, if $\max\{\log^{7}(np), n^{2/q} \log^{3+\varepsilon}(np)\} = o(\us)$ for some $\varepsilon > 0$, then $\Prob_0(T_n \le q_{T_{n}^{*} \mid X_{1}^{n}}(\alpha)) \to \alpha$ uniformly in $\alpha \in (0,1)$ as $n \to \infty$.
	\end{cor}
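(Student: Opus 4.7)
The plan is to derive the Corollary from Theorem \ref{thm:bootstrap_approx_cusum_statistic} by a standard quantile-transfer argument that converts a high-probability Kolmogorov distance bound into uniform control of the coverage error. Write $\varpi$ for $C\varpi_{1,n}$ in case (i) and for $C(\varpi_{1,n}+\varpi_{2,n})$ in case (ii), where $C$ is the constant furnished by Theorem \ref{thm:bootstrap_approx_cusum_statistic}, so that the event $A := \{\rho^*(T_n,T_n^*) \le \varpi\}$ satisfies $\Prob_0(A) \ge 1-\gamma$. Throughout set $F(t) = \Prob_0(T_n \le t)$, $\hat{F}(t) = \Prob_0(T_n^* \le t \mid X_1^n)$ and $Q_\alpha = q_{T_n^*\mid X_1^n}(\alpha)$.

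First I would note that, conditional on $X_1^n$, the statistic $T_n^*$ is the $\ell^\infty$-norm of a centered Gaussian vector driven by the i.i.d.\ multipliers $e_1,\ldots,e_n$, so $\hat{F}$ is a.s.\ continuous and $\hat{F}(Q_\alpha) = \alpha$ a.s. Introducing the deterministic bracketing quantiles $q^{\pm}(\alpha) := \inf\{t \in \R : F(t) \ge \alpha \pm \varpi\}$, the uniform bound $\sup_t |F(t)-\hat{F}(t)| \le \varpi$ on $A$ yields the sandwich $q^-(\alpha) \le Q_\alpha \le q^+(\alpha)$ on $A$: indeed $F(q^+(\alpha)) \ge \alpha+\varpi$ gives $\hat{F}(q^+(\alpha)) \ge \alpha$, while for $t<q^-(\alpha)$ one has $F(t)<\alpha-\varpi$ and hence $\hat{F}(t)<\alpha$. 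Splitting on $A$ and $A^c$ then produces the two-sided estimate
\begin{equation*}
F(q^-(\alpha)) - \gamma \;\le\; \Prob_0(T_n \le Q_\alpha) \;\le\; F(q^+(\alpha)) + \gamma
\end{equation*}
uniformly in $\alpha \in (0,1)$.

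Next I would bound $F(q^\pm(\alpha))$ in terms of $\alpha$ and $\varpi$. The key observation is that because $\hat{F}$ is a.s.\ continuous and $|F(t)-\hat{F}(t)| \le \varpi$ for all $t$ on the positive-probability event $A$, telescoping through $\hat{F}$ at any $\omega \in A$ produces the \emph{deterministic} jump bound $F(t)-F(t^-) \le 2\varpi$ for every $t$. Combined with the right-continuity of $F$ and the definition of $q^\pm(\alpha)$, this yields $F(q^-(\alpha)) \ge \alpha - \varpi$ and $F(q^+(\alpha)) \le \alpha + 3\varpi$, hence $\sup_\alpha |\Prob_0(T_n \le Q_\alpha) - \alpha| \le 3\varpi + \gamma$, which is (\ref{eqn:size_validity_bootstrap_cusum_subexp}) or (\ref{eqn:size_validity_bootstrap_cusum_poly}) after absorbing the factor $3$ into the universal constant. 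The consistency claim then follows by choosing $\gamma = \gamma_n$ with $\log(\gamma_n^{-1}) \le K\log(np)$ so that both $\gamma_n \to 0$ and $\varpi \to 0$: in case (i) any polynomially small $\gamma_n$ suffices under $\log^{7}(np) = o(\us)$, while in case (ii) one may take $\gamma_n = \log^{-\varepsilon q/2}(np)$, so that $\gamma_n^{-2/q}$ only inflates $\varpi_{2,n}$ by a logarithmic factor and matches the hypothesis $n^{2/q}\log^{3+\varepsilon}(np) = o(\us)$.

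The main subtlety is the quantile transfer in the absence of a priori continuity of $F$; the short telescoping observation above substitutes for the more usual invocation of Gaussian anti-concentration and exploits the structural continuity of the bootstrap CDF $\hat{F}$. All remaining steps, including uniformity in $\alpha$ and matching the logarithmic rates in the consistency statements, are bookkeeping.
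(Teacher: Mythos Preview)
Your argument is correct and takes a genuinely different route from the paper's own proof. The paper does not invoke Theorem \ref{thm:bootstrap_approx_cusum_statistic} as a black box; instead it reroutes through the deterministic Gaussian intermediate $\bar{Y}_n$, writing $|\Prob_0(T_n \le q_{T_n^*|X_1^n}(\alpha)) - \alpha| \le \rho_\ominus(\alpha) + \rho(T_n,\bar{Y}_n)$ with $\rho_\ominus(\alpha)$ the probability of the symmetric difference of $\{T_n \le q_{T_n^*|X_1^n}(\alpha)\}$ and $\{T_n \le q_{\bar{Y}_n}(\alpha)\}$, and then bounds $\rho_\ominus(\alpha)$ by an external lemma (Lemma C.3 of \cite{chen2017+}) that packages the Gaussian comparison and anti-concentration steps. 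Your approach is more self-contained: you use only the high-probability Kolmogorov bound from Theorem \ref{thm:bootstrap_approx_cusum_statistic}, sandwich the random quantile between deterministic ones, and---this is the nice idea---replace the usual anti-concentration input by the observation that continuity of the bootstrap CDF $\hat F$ on any single $\omega \in A$ forces every jump of the deterministic $F$ to be at most $2\varpi$. What the paper's route buys is modularity (it reuses the decomposition $\rho(T_n,\bar Y_n)$ plus $\rho^*(\bar Y_n,T_n^*)$ already established inside Theorem \ref{thm:bootstrap_approx_cusum_statistic}); what your route buys is that it needs no intermediate Gaussian object and no external lemma, and it makes transparent exactly where the continuity of $T_n^*\mid X_1^n$ is used. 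Your choices of $\gamma_n$ for the consistency statements coincide with the paper's ($\gamma_n=n^{-1}$ in (i), $\gamma_n=[\log(np)]^{-q\varepsilon/2}$ in (ii)).
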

	
	\begin{rmk}[Choice of boundary removal parameter]
	\label{rem:theoretical_choice_boundary_removal}
		There is a trade-off for the choice of boundary removal parameter: the larger $\us$, the smaller of the error bounds and the more data points are removed from the change point detection (so that the regime allowed by the bootstrap CUSUM test is smaller). In theory, the lower bound of $\us$ is given in Corollary~\ref{cor:validitiy_bootstrap_cusum_test} for size validity of the bootstrap CUSUM test. Specifically, if the data distribution has sub-exponential tail (i.e., Condition (C) holds), then we need $\us \gg \log^7 (np)$ for $ \varpi_{1,n} = o(1)$; if the data distribution has polynomial tail (i.e., Condition (D) holds) with $q > 0$, then we need $\us \gg \max\{\log^7 (np), n^{2/q} \log^{3+\varepsilon} (np)\}$ for $ \varpi_{1,n}+\varpi_{2,n} = o(1)$. This implies that we can choose $\us = c_1 n^{c_2}$ for some constants $c_1>0, 1 \ge c_2 >0$ in either sub-exponential or polynomial case. 
		
		As a leading example, we consider a {\it fixed} normalized true change point location $t_m = m / n \in (0,1)$. Then we may choose $\us = c_{0} n$ for some small constant $c_{0} > 0$  in order to include the true change point in the interval $[\us, n-\us]$. For this setup, asymptotic size validity of the bootstrap CUSUM test is obtained if $p = O(e^{n^c})$ for some $1/7 > c > 0$ under the sub-exponential moment condition on the observations.
		\qed
	\end{rmk}
	
	
	Our second main result is to analyze the power of the bootstrap CUSUM test. We are mainly interested in characterizing the change point signal strength (quantified by the $\ell^\infty$ norm of $\delta_n$) and the location $t_m$ such that $H_0$ and $H_1$ can be (asymptotically) separated by our bootstrap CUSUM test. Without loss of generality, we may assume that $|\delta_n|_\infty \le 1$.
	
	\begin{thm}[{\bf Main result II:} power of the bootstrap CUSUM test under $H_1$]
		\label{thm:power_bootstrap_cusum_test}
		Suppose $H_1$ is true with a change point $m \in [\us, n-\us]$ and assume (A) and (B) hold. Let $\zeta \in (0, 1/2)$ and $\gamma \in (0, e^{-1})$ such that $\log(\gamma^{-1}) \le K \log(np)$ for some constant $K > 0$. \\
		(i) If (C) holds and 
		\begin{equation}
		\label{eqn:power_signal_lower_bound}
		|\delta_n|_\infty \ge {C_1 \sqrt{\log(\zeta^{-1}) \log(np) + \log(np/\alpha) \over n t_m (1-t_m)}}
		\end{equation}
		for some large enough constant $C_1 := C_1(\bar{b},\ub,K) > 0$, then there exists a constant $C_2 := C_2(\bar{b},\ub,K) > 0$ such that 
		\begin{equation}
		\label{eqn:power_lower_bound_subexp}
		\Prob_1(T_n > q_{T_{n}^{*} \mid X_{1}^{n}}(1-\alpha)) \ge 1 - \gamma - C_2 \varpi_{1,n} - 2 \zeta.
		\end{equation}
		(ii) If (D) holds and $|\delta_n|_\infty$ obeys (\ref{eqn:power_signal_lower_bound}) for some large enough constant $C_1 := C_1(\bar{b},\ub,K,q) > 0$, then there exists a constant $C_2 := C_2(\bar{b},\ub,K,q) > 0$ such that 
		\begin{equation}
		\label{eqn:power_lower_bound_poly}
		\Prob_1(T_n > q_{T_{n}^{*} \mid X_{1}^{n}}(1-\alpha)) \ge 1 - \gamma - C_2 \{ \varpi_{1,n} + \varpi_{2,n} \} - 2 \zeta.
		\end{equation}
	\end{thm}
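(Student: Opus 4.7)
The plan is to show that under $H_1$ the test statistic $T_n$ is already large at $s=m$ because of the mean shift, while simultaneously the bootstrap critical value $q_{T_n^*\mid X_1^n}(1-\alpha)$ will not exceed $O(\sqrt{\log(np/\alpha)})$, so the signal condition (\ref{eqn:power_signal_lower_bound}) pries the two apart. Without loss of generality I take $\mu=0$, since both $Z_n(s)$ and $Z_n^*(s)$ are invariant under a common translation of $X_1^n$. Writing $Z_n(s)=\E[Z_n(s)]+W_n(s)$, where $W_n(s)$ is the CUSUM driven by the pure noise $\xi_i$, a direct computation gives $\E[Z_n(m)]=-\sqrt{m(n-m)/n}\,\delta_n$, and hence
\[ T_n \;\ge\; |Z_n(m)|_\infty \;\ge\; \sqrt{n t_m(1-t_m)}\,|\delta_n|_\infty - |W_n(m)|_\infty. \]

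Next I would produce a one-point deviation bound on $|W_n(m)|_\infty$. Each coordinate $W_{nj}(m)=\sum_{i=1}^n w_i\xi_{ij}$ is a weighted sum of independent mean-zero variables with $\sum_i w_i^2=1$ and $\max_i|w_i|\le 1/\sqrt{\us}$. Under (C), Bernstein's inequality for $\psi_1$-summands together with a union bound over the $p$ coordinates yields $|W_n(m)|_\infty\le \tau_1 := C_1'\sqrt{\log(\zeta^{-1})\log(np)}$ with probability at least $1-\zeta$; the product of logarithms accommodates both the Gaussian--exponential crossover in the Bernstein tail and the accompanying union bound. Under (D), the same step is executed via a Fuk--Nagaev--Nemirovski truncation, which additionally contributes a term of order $\varpi_{2,n}$.

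The most delicate step will be controlling the bootstrap quantile under $H_1$. Conditionally on $X_1^n$, $Z_n^*(s)$ is centered Gaussian, with coordinate variance
\[ \widehat{\Var}^*_j(s) = \frac{n-s}{ns}\sum_{i=1}^s (X_{ij}-\bar X_{sj}^-)^2 + \frac{s}{n(n-s)}\sum_{i=s+1}^n (X_{ij}-\bar X_{sj}^+)^2. \]
Under $H_1$ the empirical centerings $X_i-\bar X_s^\pm$ differ from the pure noise by a mean-shift residual of $\ell^\infty$-magnitude at most $|\delta_n|_\infty\le 1$, so $\widehat{\Var}^*_j(s)$ equals the noise sample variance plus a perturbation of order $|\delta_n|_\infty^2$. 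A Bernstein/Markov concentration for sums of squares combined with a union bound over the $O(np)$ indices then gives $\max_{s,j}\widehat{\Var}^*_j(s)\le C$ on an event of probability $\ge 1-\gamma$; on this event the conditional Gaussian maximal inequality furnishes $q_{T_n^*\mid X_1^n}(1-\alpha)\le \tau_2 := C_2'\sqrt{\log(np/\alpha)}$. The $\varpi_{1,n}$ correction in (\ref{eqn:power_lower_bound_subexp}) is absorbed by aligning this conditional bound with the Gaussian-approximation machinery developed for Theorem~\ref{thm:bootstrap_approx_cusum_statistic}.

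Combining the pieces, on the intersection of the two high-probability events the signal condition forces $\sqrt{nt_m(1-t_m)}\,|\delta_n|_\infty > \tau_1+\tau_2$ for $C_1$ large enough, so $T_n>q_{T_n^*\mid X_1^n}(1-\alpha)$. A union bound over the failure events delivers the lower bound $1-\gamma-C_2\varpi_{1,n}-2\zeta$ in part~(i); part~(ii) follows identically, with the Fuk--Nagaev step propagating the extra $\varpi_{2,n}$ into (\ref{eqn:power_lower_bound_poly}). The hard part, as noted, is ensuring that the mean shift under $H_1$ does not inflate the empirical-centered second moments used by the bootstrap, which is exactly where the boundedness assumption $|\delta_n|_\infty\le 1$ does its work.
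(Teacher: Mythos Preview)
Your outline is correct and structurally matches the paper's proof: lower-bound $T_n$ at $s=m$ by $|\Delta_m|_\infty$ minus a noise term, and bound the bootstrap quantile by showing $\max_{s,j}\widehat{\Var}^*_j(s)\le C$ with probability $\ge 1-\gamma$ (the paper packages this step as Lemma~\ref{lem:bound_bar_psi} plus Lemma~\ref{lem:hatDelta}, which is exactly your ``noise sample variance plus an $O(|\delta_n|_\infty^2)$ perturbation'' observation). The one genuine difference is the noise control. You bound $|W_n(m)|_\infty$ at the single index $s=m$ via Bernstein; the paper instead bounds the full maximum $\tilde T_n=\max_{\us\le s\le n-\us}|W_n(s)|_\infty$ by passing to its Gaussian analogue $\bar Y_n$ through Step~1 of the proof of Theorem~\ref{thm:bootstrap_approx_cusum_statistic} and then invoking $\|\bar Y_n\|_{\psi_2}\lesssim\log^{1/2}(np)$ to obtain $q_{\tilde T_n}(1-\beta_n)\le C\sqrt{\log(\zeta^{-1})\log(np)}$ with $\beta_n=C\varpi_{1,n}+2\zeta$. \emph{This} Gaussian-approximation step is the sole origin of the $\varpi_{1,n}$ (and, under (D), the $\varpi_{2,n}$) term in the theorem, so your attempt to ``absorb'' it on the bootstrap-quantile side is misplaced: in your more elementary route $\varpi_{1,n}$ never enters, and carried out cleanly you would actually prove the stronger bound $1-\gamma-2\zeta$, from which the stated theorem follows a fortiori. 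The only point to tighten is that the raw Bernstein tail for $\psi_1$ summands (with $\sum_i w_i^2=1$ and $\max_i|w_i|\le\us^{-1/2}$) delivers $|W_n(m)|_\infty\le C\big(\sqrt{\log(p/\zeta)}+\us^{-1/2}\log(p/\zeta)\big)$ rather than the product form $\sqrt{\log(\zeta^{-1})\log(np)}$, and reconciling the two uniformly in $\zeta\in(0,1/2)$ needs a short side argument; the paper's Gaussian-approximation detour yields that product form directly, at the stated $\varpi$ price.
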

	
	
	\begin{rmk}[Rate-optimality on the change point detection for sparse alternatives]
		For i.i.d.\ Gaussian errors $\xi_i \sim N(0, \Id_p)$ in the mean-shift model (\ref{eqn:mean_shifting_model}), the change point detection boundary is characterized in \cite{enikeevaharchaoui2014}. Suppose a change $a > 0$ occurs in the first $k$ components of $\delta_n = (a, \dots, a, 0, \dots, 0)^\top$
		at the location $m$ in the sequence $X_1,\dots,X_n$. Following \cite{enikeevaharchaoui2014}, we consider the scaling limit $p = n^{c_1}$ and $k = p^{1-c_2}$ for some $c_1 > 0$ and $c_2 \in [0,1)$. If $c_2 \in (1/2,1)$, then the number of components with a change point is highly sparse. In this case, the minimax separation for $H_0$ and $H_1$ is given by $a = r_p \sqrt{\log(p) / {n t_m (1-t_m)}}.$
		Specifically, detection is impossible if $\limsup_{p \to \infty} r_p < \sqrt{2 c_2 - 1}$ and detection is possible if $\liminf_{p \to \infty} r_p > \sqrt{2 c_2 / (1-\log{2})}$. Choosing $\alpha_n = n^{-c}$ for some constant $c > 0$ in Corollary \ref{cor:validitiy_bootstrap_cusum_test} and Theorem \ref{thm:power_bootstrap_cusum_test}, we see that if $a \ge C_* \sqrt{\log(\zeta^{-1}) \log(p) / n t_m (1-t_m)}$
		for some large constant $C_* > 0$, then our bootstrap CUSUM change point test achieves the minimax separation rate in the high sparsity regime (with stronger side conditions to ensure the bootstrap validity). Hence, the signal strength requirement for detection in the proposed bootstrap test achieves the minimax optimal rate under the sparse alternatives. On the other hand, under the dense alternatives $c_2 \in [0,1/2]$, our bootstrap CUSUM test remains consistent in detection in the sense that the sum of type I and type II errors converges to zero. However, in such case, the bootstrap CUSUM test does not achieve the detection boundary and the minimax separation rate \citep{enikeevaharchaoui2014}.
		\qed
	\end{rmk}
	
	\begin{rmk}[Monotonicity of power in the signal strength]
		Inspecting the proof of Theorem \ref{thm:power_bootstrap_cusum_test}, we see that the type II error of the bootstrap CUSUM test is bounded by a probability depending on the change point signal strength $|\delta_n|_\infty$ and location $m$ (cf. equation (\ref{eqn:type_II_error_lower_bound}) in the SM Section~\ref{sec:proofs}). Specifically, 
		\[
		\text{Type II error} \le \Prob_1(\tilde{T}_n \ge \tilde \Delta - q_{T_{n}^{*} \mid X_{1}^{n}}(1-\alpha)),
		\]
		where $\tilde\Delta = \sqrt{n t_m (1-t_m)} |\delta_n|_\infty$, $\tilde{T}_n = \max_{\us \le n \le n-\us} |Z_n^\xi(s)|$, and $Z_n^\xi(s)$ are the CUSUM statistics computed on the $\xi_1^n$ noise random vectors. Since the distribution of $\tilde{T}_n$ does not depend on $\delta_n$ and the conditional quantile $q_{T_{n}^{*} \mid X_{1}^{n}}(1-\alpha)$ is bounded by $O(\sqrt{\log(np)})$ with high probability under $H_1$, the power of the bootstrap CUSUM test is lower bounded by a quantity that is non-decreasing in $|\delta_n|_\infty$. Simulation examples in Section \ref{sec:simulation} confirm our theoretical observation. In addition, since $t_m (1-t_m)$ is maximized at $t_m = 1/2$, a change point near the middle is easier to detect than it is near the boundary.
		\qed
	\end{rmk}

	\subsection{Rate of convergence of the change point location estimator}
	\label{subsec:rate_convergence_cp_location_estimator}
	Our third main result is concerned with the rate of convergence of the change point location estimator $t_{\hat{m}_\theta}$, where $\hat{m}_\theta$ is defined through (\ref{eqn:cusum_identification}) and (\ref{eqn:generalized_cusum}). We first consider the case of $\theta = 1/2$ corresponding to the covariance stationary CUSUM statistics.
	
	\begin{thm}[{\bf Main result III:} rate of convergence for change point location estimator: $\theta = 1/2$]
		\label{thm:rate_location_estimator}
		Suppose that (B) holds and $H_1$ is true. Suppose that $\log(\gamma^{-1}) \le K\log(np)$ for some constant $K > 0$. \\
		(i) If (C) holds, then there exists a constant $C := C(\bar{b},K) > 0$ such that 
		\begin{equation}
		\label{eqn:rate_location_estimator_subexp}
		\Prob_1 \Big( |t_{\hat{m}_{1/2}} - t_{m}|  \le {C \log^2(np) \over \sqrt{n t_{m}(1-t_{m})} |\delta_n|_\infty} \Big) \ge 1- \gamma.
		\end{equation}
		(ii) If (D) holds with $q > 2$, then there exists a constant $C := C(\bar{b},K,q) > 0$ such that 
		\begin{equation}
		\label{eqn:rate_location_estimator_poly}
		\Prob_1 \Big( |t_{\hat{m}_{1/2}} - t_{m}|  \le {C n^{1/q} (\log(np) + \gamma^{-1/q}) \over \sqrt{n t_{m}(1-t_{m})} |\delta_n|_\infty} \Big) \ge 1- \gamma.
		\end{equation}
	\end{thm}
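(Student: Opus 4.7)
The strategy is a standard peak-argument: (1) decompose $Z_{nj}(s)=\mu_{nj}(s)+W_{nj}(s)$ into a deterministic signal and a mean-zero noise CUSUM; (2) exploit the curvature of the signal $\mu_n$ around its peak at $s=m$, together with the defining optimality of $\hat m_{1/2}$, to bound $|\hat m_{1/2}-m|$ deterministically in terms of the supremum $M_n:=\max_{1\le s<n,\,1\le j\le p}|W_{nj}(s)|$; (3) bound $M_n$ probabilistically under (C) or (D).

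\textbf{Step 1 (signal curvature).} Under $H_1$, a direct calculation gives $\mu_{nj}(s)=\E[Z_{nj}(s)]=-g(s)\delta_{nj}$, where $g(s)=(n-m)\sqrt{s}/\sqrt{n(n-s)}$ for $s\le m$ and $g(s)=m\sqrt{n-s}/\sqrt{ns}$ for $s\ge m$, peaking at $g(m)=\sqrt{m(n-m)/n}=\sqrt{nt_m(1-t_m)}$. Setting $V(s):=g(s)^2$, elementary algebra yields $V(m)-V(s)=(n-m)(m-s)/(n-s)$ for $s\le m$ and $m(s-m)/s$ for $s\ge m$. Since $g(s)\le g(m)$, the inequality $g(m)-g(s)\ge (V(m)-V(s))/(2g(m))$ combined with $n-s\le n$ (resp.\ $s\le n$) produces the one-sided curvature estimate
\[
g(m)-g(s)\;\ge\;\frac{|m-s|\sqrt{1-t_m}}{2\sqrt{nt_m}}\quad\text{for }s\le m,
\]
and the symmetric bound $|m-s|\sqrt{t_m}/(2\sqrt{n(1-t_m)})$ for $s\ge m$.

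\textbf{Step 2 (deterministic reduction to $M_n$).} Pick $j^\ast\in\arg\max_j|\delta_{nj}|$, so that $|\mu_{nj^\ast}(m)|=g(m)|\delta_n|_\infty$. Since $\max_j|\mu_{nj}(s)|=g(s)|\delta_n|_\infty$, the definition of $\hat m_{1/2}$ and the triangle inequality give
\[
g(m)|\delta_n|_\infty-M_n\;\le\;|Z_{nj^\ast}(m)|\;\le\;|Z_n(\hat m_{1/2})|_\infty\;\le\; g(\hat m_{1/2})|\delta_n|_\infty+M_n,
\]
so $g(m)-g(\hat m_{1/2})\le 2M_n/|\delta_n|_\infty$. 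Plugging in the curvature bound of Step 1 (treating $\hat m_{1/2}\le m$ and $\hat m_{1/2}\ge m$ separately) and using $t_m,\,1-t_m\le 1$ to absorb the asymmetric factor $\sqrt{t_m/(1-t_m)}$ (or its reciprocal) into $1/\sqrt{t_m(1-t_m)}$, I obtain the deterministic inequality
\[
|t_{\hat m_{1/2}}-t_m|\;\le\;\frac{4M_n}{\sqrt{nt_m(1-t_m)}\,|\delta_n|_\infty}.
\]

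\textbf{Step 3 (controlling $M_n$).} Write $W_{nj}(s)=\sum_{i=1}^n a_i(s)\xi_{ij}$ with $\sum_i a_i(s)^2=1$, a normalized weighted sum of i.i.d.\ centered coordinates. Under (C), Bernstein's inequality for weighted sub-exponential sums gives $\|W_{nj}(s)\|_{\psi_1}=O(\bar b)$ uniformly in $(s,j)$. Applying the Orlicz-norm maximal inequality $\|\max_{k\le N}X_k\|_{\psi_1}\le C\log(N+1)\max_k\|X_k\|_{\psi_1}$ to the $O(np)$ variables $\{W_{nj}(s)\}$ yields $\|M_n\|_{\psi_1}\le C\log(np)$. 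The sub-exponential tail bound $\Prob(M_n>t)\le 2\exp(-ct/\|M_n\|_{\psi_1})$, combined with $\log(\gamma^{-1})\le K\log(np)$, gives $M_n\le C'\log^2(np)$ with probability at least $1-\gamma$; inserting into the bound of Step 2 proves (i). Under (D), a Rosenthal-type bound controls $\|W_{nj}(s)\|_q$, and an $L^q$ maximal inequality (Nemirovski-type) combined with a truncation/Markov tail argument that separates the Gaussian-like body from the polynomial tail produces $M_n\le C n^{1/q}(\log(np)+\gamma^{-1/q})$ with probability at least $1-\gamma$, yielding (ii).

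\textbf{Principal obstacle.} Step 3 is the technically delicate ingredient: the array $\{W_{nj}(s)\}_{s,j}$ has strong dependence in $s$ (overlapping partial sums) and arbitrary covariance in $j$, so chaining-free Gaussian maximal inequalities do not apply directly. The Orlicz route is chosen precisely because it is agnostic to the joint dependence and needs only marginal $\psi_1$-bounds on $W_{nj}(s)$, at the price of the mildly loose $\log^2(np)$ factor. Under only polynomial moments, the corresponding bound requires a careful interpolation between the near-Gaussian bulk (contributing the $\log(np)$ term) and the polynomial tail (contributing $\gamma^{-1/q}$), both amplified by the scale $n^{1/q}$ coming from the maximum of $n$ polynomially decaying variables across $s$.
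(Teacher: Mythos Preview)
Your Steps 1 and 2 are essentially the paper's argument: the same signal decomposition, the same curvature estimate for $g(m)-g(s)$ (the paper reaches it via rationalizing the difference of square roots, you via $g(m)-g(s)\ge (V(m)-V(s))/(2g(m))$), and the same deterministic reduction to $M_n=\max_{s,j}|Z_{nj}(s)-\E Z_{nj}(s)|$.

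For Part~(i) your Step~3 takes a genuinely different route. The paper applies a Talagrand/Adamczak concentration inequality to the supremum $M_n$ directly (bounding $\Prob(M_n\ge 2\E M_n+t)$ by a Gaussian plus sub-exponential tail, with $\E M_n$ controlled via a Nemirovski-type lemma). You instead bound each marginal $\|W_{nj}(s)\|_{\psi_1}\le C\bar b$ by Bernstein and then invoke the Orlicz maximal inequality over the $O(np)$ indices. Both give $M_n\le C\log^2(np)$ with probability $\ge 1-\gamma$; your argument is shorter and avoids the empirical-process machinery, at no cost here.

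For Part~(ii) there is a gap. The marginal route you used in (i) does not transfer: bounding $\|W_{nj}(s)\|_q$ by Rosenthal and then taking a maximum over $np$ indices via a union/Markov bound produces $(np/\gamma)^{1/q}$, not the claimed $n^{1/q}(\log(np)+\gamma^{-1/q})$. The dimension $p$ would enter polynomially, which contradicts the theorem. The correct rate hinges on condition (D), which controls $\E[\max_{1\le j\le p}|\xi_{ij}|^q]$ for each $i$; this must be exploited \emph{before} summing in $i$, not after. The paper does this through the Fuk--Nagaev/Adamczak inequality, whose polynomial remainder is $\E[M^q]/t^q$ with $M=\max_{i,s,j}|a_{is}\xi_{ij}|$ and $\E[M^q]\le \sum_i \E[\max_j|\xi_{ij}|^q]\le n\bar b^q$. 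Your sketch (``Rosenthal then Nemirovski then truncation'') does not make clear where (D) enters in this way; as written it would yield a bound worse by a factor $p^{1/q}$. To fix it, either switch to the Fuk--Nagaev route for the deviation term, or make explicit a truncation at the level of the individual $\xi_{ij}$'s that uses $\E[\max_j|\xi_{ij}|^q]\le\bar b^q$ to control the truncated-off part.
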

	
	Note that the non-degeneracy Condition (A) is not needed in estimating the change point location. Consider a fixed $t_m \in (0,1)$ as a leading example in Remark~\ref{rem:theoretical_choice_boundary_removal}. Theorem~\ref{thm:rate_location_estimator} guarantees consistency of $t_{\hat{m}_{1/2}}$ if the signal strength satisfying: i) $|\delta|_\infty \gg n^{-1/2} \log^2(np)$ in the sub-exponential moment case; ii) $|\delta|_\infty \gg n^{-1/2+1/q} \log(np)$ in the polynomial moment case. From Part (i) of Theorem \ref{thm:rate_location_estimator}, it should also be noted that the change point location estimator $t_{\hat{m}_{1/2}}$ does not attain the optimal rate of convergence. Consider the setup where $t_m \in (0,1)$, $p=1$, and $|\delta_n| = c$ is a constant signal. Then the rate of convergence in (\ref{eqn:rate_location_estimator_subexp}) reads $O(\log^2(n) / \sqrt{n})$; that is, up to a logarithmic factor, the change point estimator has the rate of convergence $n^{-1/2}$. In such setup, however, it is known that the best possible rate of convergence for estimating the change point location is $n^{-1}$ \citep{HarizWylieZhang2007_AoS}, which is achieved by maximizing $|Z_{0,n}(s)|$. Therefore, it is interesting to study the impact of dimensionality on the rate in the case of $\theta = 0$ when the true change point $t_m \in (0,1)$ is fixed. This is the content of the next theorem. Denote $\underline{\delta}_n = \min_{j \in \calS} |\delta_{nj}|$.

	\begin{thm}[{\bf Main result IV:} rate of convergence for change point location estimator: $\theta = 0$]
		\label{thm:rate_location_estimator_nonstationary}
		Suppose that (B) holds and $H_1$ is true with a change point $m$ satisfying $c_1 \le t_m \le c_2$ for some constants $c_1,c_2 \in (0,1)$. Suppose that $\log^3(np) \le K n$ and $\log(\gamma^{-1}) \le K\log(np)$ for some constant $K > 0$. \\
		(i) If (C) holds,
		then there exists a constant $C := C(\bar{b}, K, c_1, c_2) > 0$ such that 
		\begin{equation}
		\label{eqn:rate_cp_estimation_theta=0_subsexp}
		\Prob_1 \left( |t_{\hat{m}_0} - t_m| \le {C \log^4(np) \over n \underline{\delta}_n^2} \right) \ge 1-\gamma.
		\end{equation}
		(ii) If (D) holds for some $q \ge 2$, 
		then there exists a constant $C := C(\bar{b}, K, q, c_1, c_2) > 0$ such that 
		\begin{equation}
		\label{eqn:rate_cp_estimation_theta=0_poly}
		\Prob_1 \left( |t_{\hat{m}_0} - t_m| \le {C \log(np) \over n \underline{\delta}_{n}^2} \, \max \left\{ 1, {n^{2/q} \log(np) \over \gamma^{2/q}} \right\} \right) \ge 1-\gamma.
		\end{equation}
	\end{thm}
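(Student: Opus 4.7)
The plan is to reduce to a one-dimensional change-point argument at the coordinate $\hat{j}$ achieving the joint $\ell^{\infty}$-maximum of $|Z_{0,n}(s)|_{\infty}$ over $(s,j)$. Decomposing $Z_{0,n,j}(s) = g_j(s) + W_j(s)$ with signal $g_j(s) = -\delta_{nj}\min(s,m)[n-\max(s,m)]/n$ (a tent function peaked at $s=m$ with height $\propto |\delta_{nj}| n t_m(1-t_m)$) and noise $W_j(s) = \sum_{i=1}^s \xi_{ij} - (s/n)\sum_{i=1}^n \xi_{ij}$, the crucial structural property is $\Var(W_j(m+u)-W_j(m)) \asymp u$ for $u \ll n$, which is exactly what enables an $n^{-1}$ rate rather than the $n^{-1/2}$ rate obtained with $\theta=1/2$.

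Let $\hat{j} \in \arg\max_j |Z_{0,n,\hat{j}}(\hat{m}_0)|$, so that $(\hat{m}_0,\hat{j})$ is a joint argmax and hence
\[
|Z_{0,n,\hat{j}}(\hat{m}_0)|^2 \ge |Z_{0,n,\hat{j}}(m)|^2.
\]
First I would show that $\hat{j} \in \calS := \{j : \delta_{nj} \ne 0\}$ on a high-probability event. Taking $j^{*} \in \arg\max_j |\delta_{nj}|$, one has $|Z_{0,n,j^{*}}(m)| \ge |g_{j^{*}}(m)| - |W_{j^{*}}(m)| \gtrsim n t_m(1-t_m)|\delta_n|_{\infty} \ge c\,n\underline{\delta}_n$; whereas for any $j \notin \calS$, $|Z_{0,n,j}(\hat{m}_0)| = |W_j(\hat{m}_0)| \le \sup_{s,j}|W_j(s)|$, which is of order $\sqrt{n\log^{3}(np)}$ via Bernstein under (C) or Nagaev under (D). The theorem's bound is informative only in the regime $\underline{\delta}_n \gtrsim \log^{2}(np)/\sqrt{n}$, precisely when noise at a zero-signal coordinate cannot upset the signal at $j^{*}$, forcing $\hat{j} \in \calS$.

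Reduced to coordinate $\hat{j} \in \calS$, expanding the cornerstone inequality gives
\[
g_{\hat{j}}(m)^2 - g_{\hat{j}}(\hat{m}_0)^2 \le 2\bigl[g_{\hat{j}}(\hat{m}_0)W_{\hat{j}}(\hat{m}_0) - g_{\hat{j}}(m)W_{\hat{j}}(m)\bigr] + W_{\hat{j}}(\hat{m}_0)^2 - W_{\hat{j}}(m)^2.
\]
Direct computation (WLOG $\hat{m}_0 > m$) yields LHS $= m^2(\hat{m}_0-m)(2n-\hat{m}_0-m)\delta_{n\hat{j}}^2/n^2$, which under $c_1 \le t_m \le c_2$ together with a preliminary check that $\hat{m}_0/n$ stays away from the boundary is $\gtrsim n\,\delta_{n\hat{j}}^2|\hat{m}_0-m|$. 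Rewriting the RHS through the increments $\Delta g_{\hat{j}} := g_{\hat{j}}(\hat{m}_0) - g_{\hat{j}}(m)$ and $\Delta W_{\hat{j}} := W_{\hat{j}}(\hat{m}_0) - W_{\hat{j}}(m)$, the leading term is $2g_{\hat{j}}(m)\Delta W_{\hat{j}} \lesssim n|\delta_{n\hat{j}}|\cdot|\Delta W_{\hat{j}}|$, all other terms being of strictly lower order in $\hat{m}_0 - m$ (or in $|\delta_{n\hat{j}}|$) under the regime above.

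The crux is then a uniform modulus-of-continuity bound
\[
\sup_{1 \le u \le n,\ 1 \le j \le p} \frac{|W_j(m+u) - W_j(m)|}{\sqrt{u}} \le \beta_n,
\]
with $\beta_n^2 \asymp \log^{4}(np)$ under (C) (via Bernstein, dyadic peeling over $u$, union bound over $j$, and using $\log^{3}(np) \le Kn$ to stay in the sub-Gaussian regime) or with $\beta_n^2$ involving a polynomial factor in $n$ under (D) (via Nagaev's inequality). Plugging this in yields $n\,\delta_{n\hat{j}}^2|\hat{m}_0-m| \lesssim n|\delta_{n\hat{j}}|\beta_n\sqrt{|\hat{m}_0-m|}$, which solves to $|\hat{m}_0-m|/n \lesssim \beta_n^2/(n\delta_{n\hat{j}}^2) \le \beta_n^2/(n\underline{\delta}_n^2)$, matching the theorem. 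I expect the main obstacle to be the joint modulus-of-continuity bound over $(u,j)$—particularly in the polynomial-moment case, where Nagaev's heavy tail must be carefully balanced against the number of peeling levels in $u$—together with the self-referential bookkeeping required to certify that $\hat{m}_0/n$ itself lies in an interval disjoint from $\{0,1\}$, since otherwise the quadratic coefficient on the LHS of the expansion would degenerate and the whole argument would collapse.
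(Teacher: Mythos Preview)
Your approach is correct and genuinely different from the paper's. The paper does not expand a quadratic basic inequality; instead it works directly with the event $\{|t_{\hat m_0}-t_m|>r/n\}\subset\{\max_{s\ge m+r}|\tilde Z_n(s)|_\infty\ge |\tilde Z_n(m)|_\infty\}\cup\{\cdots\}$. After restricting (via a lemma) to the event $\mathcal G$ that the outer maximum over $s\ge m+r$ is attained at some $j\in\calS$, the paper splits $\{|\tilde Z_{nj}(s)|\ge \tilde Z_{nj^*}(m)\}$ into $\{\tilde Z_{nj}(s)-\tilde Z_{nj^*}(m)\ge 0\}\cup\{\tilde Z_{nj}(s)+\tilde Z_{nj^*}(m)\le 0\}$, uses $\max_j a_j-\max_j b_j\le \max_j(a_j-b_j)$ to pass to a single coordinate, and reduces each event to deviations of \emph{unnormalized} partial sums $\sum_{i=m+1}^s\xi_{ij}$ and $\sum_i\xi_{ij}$. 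A Talagrand-type maximal inequality then shows $\Prob(\max_{s'\ge r}\max_j|(1/s')\sum_{i\le s'}\xi_{ij}|\ge c\underline\delta_n)$ is small once $r\gtrsim \log^4(np)/\underline\delta_n^2$. This event-by-event argument avoids tracking cross terms $2W_0\Delta g$, $2W_0\Delta W$, $(\Delta W)^2$ that your quadratic expansion produces, and in particular never needs a uniform-in-$u$ modulus-of-continuity bound $|W_j(m+u)-W_j(m)|/\sqrt u\le\beta_n$. Your route is the classical M-estimation argument and is arguably more transparent about where the $\sqrt u$ scaling (hence the $n^{-1}$ rate) enters; the paper's route is more direct and sidesteps the peeling over dyadic $u$-scales that you flag as the delicate step under (D).

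One remark on your anticipated obstacle: the worry that the quadratic coefficient on the LHS degenerates unless $\hat m_0/n$ stays away from the boundary is unfounded. For $\hat m_0>m$ you have $g_{\hat j}(m)^2-g_{\hat j}(\hat m_0)^2=\delta_{n\hat j}^2\,(m/n)^2(\hat m_0-m)(2n-m-\hat m_0)$ and $2n-m-\hat m_0\ge n-m\ge(1-c_2)n$; for $\hat m_0<m$ the factor is $(m-\hat m_0)(m+\hat m_0)\ge c_1 n(m-\hat m_0)$. Only the assumption $c_1\le t_m\le c_2$ on the \emph{true} location is used, so you can drop that self-referential step entirely.
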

	
	Based on Theorem~\ref{thm:rate_location_estimator_nonstationary}, we see that the dimension impacts the optimal rate of convergence for estimating the change point location only on the logarithmic scale. Compared with Theorem~\ref{thm:rate_location_estimator}, we see that faster convergence of $t_{\hat{m}_0}$ than that of $t_{\hat{m}_{1/2}}$ is possible when $t_m \in (0,1)$ is fixed and the dimension is allowed to grow sub-exponentially fast in the sample size. On the other hand, $t_{\hat{m}_{1/2}}$ is more robust to estimate the change point when its location is near the boundary, i.e., $t_m \to 0$ and $t_m \to 1$ are allowed to maintain the consistency in Theorem~\ref{thm:rate_location_estimator}; see our simulation result in Section~\ref{sec:simulation} for numeric comparisons. 

	\subsection{Rate of convergence of BABS}
	\label{subsec:consistency_mcp}
	
	Under the multiple mean-shifts model~\eqref{eqn:model_mcp}, we consider the testing problem for $H_{0}$ against the alternative hypothesis with multiple change points
	\begin{equation}
	\label{eqn:change_point_mean_test_mcp}
	H_1^{'} : \delta_n^{(k)} \neq 0 \text{ for some } 1 = m_0 < m_1 < \dots < m_\nu < m_{\nu+1} = n \text{ and } \nu \ge 1.
	\end{equation}
	The following Lemma~\ref{lem:power_mcp} controls the power of our bootstrap CUSUM test based on $T^{*}_{n}$ in~\eqref{eqn:bootstrap_cusum_mean_L_inf} in presence of multiple change points. This is the initial step of the BABS (Algorithm~\ref{alg:multi_cp}) and the power control is crucial for deriving the overall rate of convergence of recursively estimating the multiple change points. Denote $\bar\delta_n = \min_{k=1, \dots, \nu} |\delta_n^{(k)}|_\infty$.
	
	\begin{lem}[Power of the bootstrap CUSUM test under $H'_1$]
		\label{lem:power_mcp}
		Suppose $H_1^{'}$ is true under the multiple mean-shift model (\ref{eqn:model_mcp}) with $\{m_k\}_{k=1}^\nu \subset [\us, n-\us]$. Assume (A), (B) and $\min_{k=0, \dots, \nu} |m_{k+1} - m_k| \ge D_\nu$ for some $D_\nu > \us$. Let $\zeta \in (0, 1/2)$ and $\gamma \in (0, e^{-1})$ such that $\log(\gamma^{-1}) \le K \log(np)$ for some constant $K > 0$. \\
		(i) If (C) holds and 
		\begin{equation}
		\label{eqn:power_signal_lower_bound_mcp}
		\max_{k=1, \dots, \nu} { D_\nu^2 \bar\delta_n \over \sqrt{n^3 t_{m_{k}} (1-t_{m_{k}})}} \ge C_1 \left[ \sqrt{\log(\zeta^{-1}) \log(np) + \nu^2 \log(np/\alpha)}  \right]
		\end{equation}
		for some large enough constant $C_1 := C_1(\bar{b},\ub,K) > 0$, then there exists a constant $C_2 := C_2(\bar{b},\ub,K) > 0$ such that (\ref{eqn:power_lower_bound_subexp}) holds.\\
		(ii) If (D) holds and $|\delta_n|_\infty$ obeys (\ref{eqn:power_signal_lower_bound_mcp}) for some large enough constant $C_1 := C_1(\bar{b},\ub,K,q) > 0$, then there exists a constant $C_2 := C_2(\bar{b},\ub,K,q) > 0$ such that (\ref{eqn:power_lower_bound_poly}) holds.
	\end{lem}
	
	\begin{rmk} [Comments on the signal strength under multiple change points alternative]
		The signal strength on the LHS of (\ref{eqn:power_signal_lower_bound_mcp}) depends on the smallest mean shift $\bar\delta_n$ in $\ell^\infty$-norm and change point locations that are closest to boundary, which is the most difficult situation for CUSUM statistic to detect mean change.
		If $\nu = 1$, then $D_\nu = \min\{m, n-m\}$ and ${m(n-m) / n} \le D_\nu \le {2m(n-m) / n}.$
		Thus the LHS of  (\ref{eqn:power_signal_lower_bound_mcp}) has the same order as $n^{1/2} (t_m(1-t_m))^{3/2} |\delta_{n}|_\infty$, which is stronger than the requirement of lower bound $n^{1/2} (t_m (1-t_m))^{1/2} |\delta_{n}|_\infty$ in Theorem~\ref{thm:power_bootstrap_cusum_test}.  This extra cost comes from handling the possible mean shift cancellation in analyzing the general case of multiple change points. If the single change point is bounded from boundaries (i.e., $t_m$ can be treated as a constant), then Lemma \ref{lem:power_mcp} gives the same lower bound~\eqref{eqn:power_signal_lower_bound} as in Theorem \ref{thm:power_bootstrap_cusum_test}.
		\qed
	\end{rmk}

	Now we turn to the bootstrap-assisted binary segmentation algorithm BABS($\alpha,b, e$). We make the following assumptions in addition to (A)-(D).
	\begin{enumerate}[label=\alph*)]
		\setlength\itemsep{0em}
		\item $\min_{k=0, \dots, \nu} |m_{k+1} - m_k| \ge D_\nu$, where $D_\nu \ge n^{\Theta}$ for some $\Theta \le 1$.
		\item $\min_{k=1, \dots, \nu} \min_{j \in \calD_k} |\delta^{(k)}_{nj}| \ge \udelta_n$, where $\calD_k = \{1 \le j \le p: \delta^{(k)}_{nj} \neq 0 \}$ and $\udelta_n \ge n^{-\omega}$ for some $\omega \ge 0$.
		\item $\Theta - {\omega \over 2} > {3 \over 4}$.
		\item $n^{{3 \over 2}\Theta-1-\omega} >  C \max \{ \log^2(np), \sqrt{\log(\zeta^{-1}) \log(np) + \nu^2 \log(np/\alpha)} \}$, where $\zeta, K$ are constants defined in Lemma~\ref{lem:power_mcp}. Here $C > 0$ is a constant depending only on $\bar{b},\ub,K$ under (C) and on $\bar{b},\ub,K,q$ under (D).
		\item $\epsilon_n < \us < D_\nu$, where $\epsilon_n$ is defined in~\eqref{eqn:eps_bs} below.
	\end{enumerate}
	Assumptions a)-c) are standard signal separation and strength requirements in estimating the multiple change point locations via binary segmentation, see e.g.,Theorem 1 in \citep{fryzlewicz2014}. Assumption d) ensures that the bootstrap CUSUM test is able to consistently detect the mean-shift signals (cf. Lemma~\ref{lem:power_mcp}). 
	Under Assumption d), the expected signal size $|\E Z_{n,b,e}(s)|_\infty$ dominates the random behavior of $Z_{n,b,e}(s)$, thus obeying (\ref{eqn:power_signal_lower_bound_mcp}) with large probability.
	Assumption e) is a minimal condition on the boundary removal parameter $\us$, which is smaller than the separation distance between any consecutive change points and larger than the rate of convergence $\epsilon_n$ for consistently estimating all change point locations. Note that the signal strength requirement in estimation depends on $\min_{j \in \calD_k} |\delta^{(k)}_{nj}|$ in assumption b), which is typically stronger than $\max_{1 \le j \le p} |\delta^{(k)}_{nj}|$ used in the testing problem.

	
	
	\begin{thm}[{\bf Main result V:} rate of convergence of BABS]
		\label{thm:binseg_consistency_mcp}
		Let $\hat{\nu}$ denote the number of change points and $\hat{m}_1 < \dots <\hat{m}_{\hat{\nu}}$ the change point locations estimated from BABS$(\alpha,1,n)$.  Assume (A), (B) and a)-e) hold.
		Let $\gamma \in (0, e^{-1})$ such that $\log (\gamma^{-1}) \le K \log( D_\nu p) \le K \log( n p) $ and $\zeta$ is defined as in Theorem \ref{thm:power_bootstrap_cusum_test}. Define
		\begin{equation}
		\label{eqn:eps_bs}
		\epsilon_n = \left\{
		\begin{array}{cc}
		{n^2 \log^4(np) \over D_\nu^{2} \udelta_n^{2} }, & \mbox{if (C) holds} \\
		{n^{2+6/q} (\log^2(np) + \gamma^{-2/q}) \over D_\nu^{2} \udelta_n^{2}}, & \mbox{if (D) holds}\\
		\end{array}
		\right. .
		\end{equation}
		(i) If (C) holds, then there exist constants $C_0 = C_0(\bar{b}, \ub, K), C_0^{'}=C_0^{'}(\alpha, \bar{b}, K)$ such that
		\[
		\Prob(\calS_n) \ge 1- 2 \gamma - \nu (\gamma +2\zeta + C_0 \varpi_{1,n}) - (\nu+1) \alpha,
		\]
		where $\calS_n = \left\{ \hat{\nu} = \nu \mbox{ and } \max_{k=1, \dots, \nu} |\hat{m}_k - m_k| \le C_0^{'} \epsilon_n \right\}$.\\
		(ii) If (D) holds, then there exist constants $C_0 = C_0(\bar{b}, \ub, K, q), C_0^{'}=C_0^{'}(\alpha, \bar{b}, K, q)$ such~that
		\[
		\Prob(\calS_n) \ge 1- 2 \gamma - \nu \{\gamma +2\zeta + C_0 (\varpi_{1,n}+\varpi_{2,n})\} - (\nu+1) \alpha.
		\]
	\end{thm}
	
	Theorem~\ref{thm:binseg_consistency_mcp} reveals an interesting size-power trade-off of the BABS algorithm in multiple change point detection and estimation. For smaller $\alpha$, stronger signal strength and larger change point separation are needed to fulfill Assumption d) required by Theorem~\ref{thm:binseg_consistency_mcp}. For larger $\alpha$, the bootstrap CUSUM test used in BABS tends to reject more $H_0$ and consequently the BABS is likely to over-estimate the number of change points under $H'_1$. This is reflected by the multiple testing term $(\nu+1) \alpha$ in the lower bound of $\Prob(\calS_n)$ in Theorem~\ref{thm:binseg_consistency_mcp}. Moreover, under $H'_1$, both type I error (quantified by $\alpha$) and type II error (quantified by $\zeta$), together with their Bonferroni type multiple testing adjustment (quantified by $\nu$), affect consistency of the BABS algorithm.

	
	Binary segmentation was also considered in \cite{chofryzlewicz2015} and \cite{cho2016change}, both of which are consistent under their own conditions with different rate of convergence $\epsilon_n$. Our rate of $\epsilon_n$ is similar to that in \cite{chofryzlewicz2015} up to a logarithmic factor when $\Theta\in (3/4,1]$ and it is sharper than that in \cite{cho2016change} for sparse alternative. Comparisons on explicit rates are given in Remark~\ref{rmk:compare_BS} in the SM.


	\section{Simulation studies}
	\label{sec:simulation}
	
	In this section, we perform extensive simulation studies to investigate the size and power of the proposed bootstrap change point test, the estimation error of the change point location(s), as well as empirical performance of BABS. 
	In all setups, 200 bootstrap samples (if necessary) are drawn for each simulation.
	
	\subsection{Setup}
	We generate i.i.d.\ $\xi_i$ in the mean-shift model (\ref{eqn:mean_shifting_model}) from three distributions. 
	\begin{enumerate}[leftmargin=0.5cm,itemindent=.5cm,labelwidth=\itemindent,align=parleft]
		\setlength\itemsep{0.0em}
		\item Multivariate Gaussian distribution: $\xi_i \sim N(0, V)$.
		\item Multivariate elliptical $t$-distribution with degree of freedom $\nu$: $\xi_i \sim t_\nu(V)$ with the probability density function \cite[Chapter 1]{muirhead1982}
		\begin{equation*}
		f(x; \nu, V) = {\Gamma(\nu+p)/2 \over \Gamma(\nu/2) (\nu \pi)^{p/2} \det(V)^{1/2}} \left( 1 + {x^\top V^{-1} x \over \nu} \right)^{-(\nu+p)/2}.
		\end{equation*}
		The covariance matrix of $\xi_i$ is $\Sigma = \nu / (\nu-2) V$. In our simulation, we use $\nu=6$.
		\item Contaminated Gaussian: $\xi_i \sim \text{ctm-Gaussian}(\varepsilon, \nu, V)$ with density 
		\begin{equation*}
		f(x; \varepsilon, \nu, V) = (1-\varepsilon) \ {{\exp\left(-{x^\top V^{-1} x \over2} \right)} \over (2\pi)^{p/2} \det(V)^{1/2}}  + \varepsilon \ { {\exp\left(-{x^\top V^{-1} x \over2 \nu^2} \right)} \over (2 \pi \nu^2)^{p/2} \det(V)^{1/2}} . 
		\end{equation*}
		The covariance matrix of $\xi_i$ is $\Sigma = [ (1-\varepsilon) + \varepsilon \nu^2 ] V$. We will fix $\varepsilon = 0.2$ and $\nu = 2$.
	\end{enumerate}
	We consider three cross-sectional dependence structures of $V$ for each distribution. 
	\begin{enumerate}[leftmargin=0.5cm,itemindent=.7cm,labelwidth=\itemindent,align=parleft, label=(\Roman*)]
		\setlength\itemsep{0.0em}
		\item Independent: $V=\Id_p$, where $\Id_p$ is the $p \times p$ identity matrix.
		\item Strongly dependent (compound symmetry): $V=0.8 J + 0.2 \Id_p$, where $J$ is the $p \times p$ matrix containing all ones.
		\item Moderately dependent (autoregressive): $V_{ij} = 0.8^{|i-j|}$.
	\end{enumerate}

	\subsection{Simulation results for single change point model.}
	\label{subsec:simulation_single_cp}
	
	\subsubsection{Size of the bootstrap CUSUM test} 
	\label{subsubsec:size_comparison}
	
	We fix the sample size $n=500$ and vary the dimension $p=10, 300, 600$. For the bootstrap CUSUM test, we set the boundary removal parameter $\us=30, 40$. 
	For a significance level $\alpha \in (0, 1)$, we denote $\hat{R}(\alpha)$ as the proportion of empirically rejected null hypothesis in 1000 simulations. 
	
	Under $H_0$, the upper half of Table~\ref{tab:our_size_and_nominal05} reports the uniform error-in-size $\sup_{\alpha \in (0,1)} |\hat{R}(\alpha) - \alpha|$, a quantity that reflects the Kolmogorov distance between distributions of $T_n$ and its bootstrap analog $T_n^*$: the smaller uniform error-in-size, the closer $\rho^*(T_n,T_n^*)$. Each column corresponds to a combination of noise distributions and cross-sectional dependence structures. The lower half of Table~\ref{tab:our_size_and_nominal05} shows the empirical type I error $\hat{R}(\alpha)$ at the significance level $\alpha=0.05$. 
	We can draw several conclusions for our bootstrap CUSUM test by comparing results under different choices of boundary removal parameter, distribution family, and cross-sectional dependence structure. 
	First, in most cases the uniform error-in-size of $\us = 40$ are smaller than those of $\us = 30$, meaning  that the greater the $\us$, the better the approximation under $H_0$.
	Moreover, $\hat{R}(0.05)$ is generally close to the nominal size $0.05$ for $\us = 40$. 
	Next, uniform errors-in-size is usually smaller for the Gaussian distribution than that of $t_{6}$ or ctm-Gaussian cases. 
	Lower Table~\ref{tab:our_size_and_nominal05} delivers a similar message that Gaussianity helps to control $\hat{R}(0.05)$.
	Finally, our method is robust to the cross-sectional dependence structure. In many cases, stronger dependence (II$>$III$>$I) is more beneficial for reducing the approximation errors. 
	In summary, size can be better controlled if $\us$ is large, data are Gaussian distributed, and strong cross-sectional dependence exists.
	As a visualization of the accuracy for size control, Figure~\ref{fig: AlphaEg} displays three example curves of $\hat{R}(\alpha)$ for our proposed test where $p=600, \us=40$. The rejection rate $\hat{R}(\alpha)$ follows closely along the diagonal line in dash (i.e. the line of $\hat{R}(\alpha) = \alpha$).
	
	A thorough comparison between our bootstrap CUSUM test and two benchmark methods (i.e., bootstrap log-ratio of maximized likelihood test and the oracle test with known covariance matrix) can be found in the SM Section~\ref{subsec:compare_H0_benchmark_simu}. In SM Section~\ref{subsec:compare_H0_benchmark_simu}, we also compare our method with the test statistics in \cite{jirak2015} (denoted as $B_n$) and \cite{enikeevaharchaoui2014} (denoted as $\psi$) under the setting $n=500, p=600, \us=40$. 

	\begin{table}
	\caption{\label{tab:our_size_and_nominal05} Uniform error-in-size, $\sup_{\alpha \in [0,1]} |\hat{R}(\alpha) - \alpha|$, and empirical type I error with nominal level 0.05,  $\hat{R}(0.05)$, for our bootstrap CUSUM test under $H_0$, where $p = 10,300,600$, $\us = 30,40$ and data are simulated from all combinations of distribution families and covariance dependence structures.}
		\centering
		\fbox{%
		\begin{tabular}{c|l|lll|lll|lll}
			\multicolumn{2}{c|}{\multirow{2}{*}{}} & \multicolumn{3}{c|}{Gaussian} & \multicolumn{3}{c|}{$t_6$} & \multicolumn{3}{c}{ctm-Gaussian} \\ \cline{3-11} 
			\multicolumn{2}{c|}{}      & I        & II       & III     & I       & II      & III    & I         & II        & III       \\ \hline
			\multicolumn{11}{c}{$\sup_{\alpha \in [0,1]} |\hat{R}(\alpha) - \alpha|$} \\\hline
			\multirow{2}{*}{$p=10$}    & $\us=30$  & 0.034    & 0.036    & 0.041   & 0.048   & 0.041   & 0.039  & 0.036     & 0.042     & 0.021     \\ 
			& $\us=40$  & 0.042    & 0.034    & 0.037   & 0.043   & 0.037   & 0.033  & 0.041     & 0.042     & 0.043     \\ \hline 
			\multirow{2}{*}{$p=300$}   & $\us=30$  & 0.054    & 0.051    & 0.050   & 0.085   & 0.036   & 0.049  & 0.115     & 0.025     & 0.065     \\ 
			& $\us=40$  & 0.046    & 0.026    & 0.035   & 0.058   & 0.030   & 0.040  & 0.057     & 0.032     & 0.055     \\ \hline
			\multirow{2}{*}{$p=600$}   & $\us=30$  & 0.051    & 0.035    & 0.048   & 0.122   & 0.044   & 0.088  & 0.103     & 0.030     & 0.096     \\ \cline{2-11} 
			& $\us=40$  & 0.060    & 0.055    & 0.046   & 0.083   & 0.038   & 0.087  & 0.079     & 0.026     & 0.057    \\\hline
			\multicolumn{11}{c}{$\hat{R}(0.05)$} \\\hline
			\multirow{2}{*}{$p=10$}  & $\us=30$       & 0.051    & 0.052    & 0.051   & 0.044   & 0.056   & 0.034  & 0.043     & 0.056     & 0.052     \\ 
			& $\us=40$       & 0.046    & 0.055    & 0.052   & 0.045   & 0.050   & 0.048  & 0.054     & 0.053     & 0.040     \\ \hline
			\multirow{2}{*}{$p=300$}     & $\us=30$       & 0.026    & 0.054    & 0.039   & 0.018   & 0.034   & 0.018  & 0.021     & 0.043     & 0.027     \\
			& $\us=40$       & 0.045    & 0.043    & 0.044   & 0.024   & 0.046   & 0.036  & 0.026     & 0.056     & 0.034     \\ \hline
			\multirow{2}{*}{$p=600$}     & $\us=30$       & 0.026    & 0.060    & 0.027   & 0.010   & 0.034   & 0.020  & 0.010     & 0.053     & 0.019     \\
			& $\us=40$       & 0.031    & 0.038    & 0.036   & 0.020   & 0.044   & 0.016  & 0.015     & 0.042     & 0.027     \\
		\end{tabular} }
	\end{table}

	\begin{figure}
		\centering
		\includegraphics[trim=0 10 10 35,clip, width= 6in]{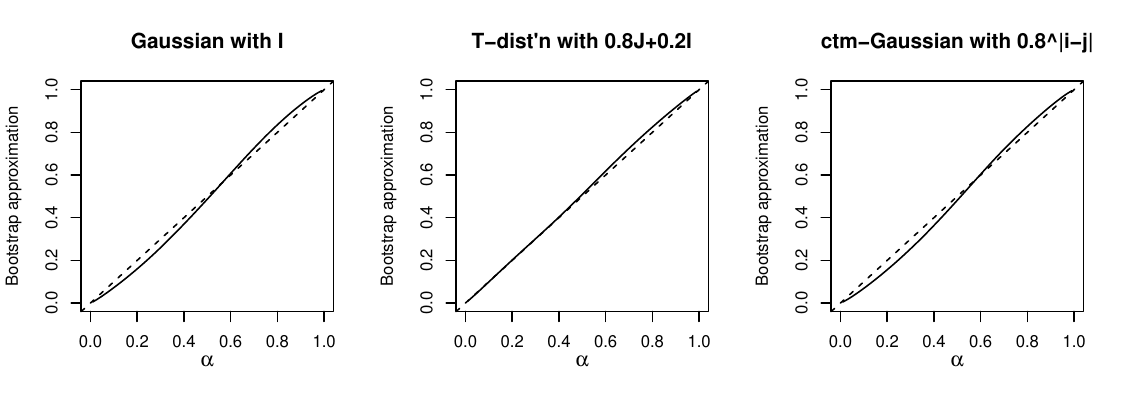} 
		\vspace{-1cm}
		\caption{Empirical rejection rate, $\hat{R}(\alpha)$, in selected data generating schemes under $H_0$: (Left) Gaussian distribution with Covariance I; (Middle) $t_6$ distribution with Covariance II; (Right) ctm-Gaussian distribution with Covariance III. Parameters: $n=500$, $p=600$, $\us=40$. }
		\label{fig: AlphaEg} 
	\end{figure}

	\subsubsection{Power of the bootstrap CUSUM test}\label{subsubsec:power_comparison}
	Under $H_1: \mu_1=\cdots=\mu_m \neq \mu_{m+1}=\cdots=\mu_n$, we consider the single change point location $m$ at $\{50, 150, 250\}$ (i.e.,\ $t_{m} = m/n = 1/10, 3/10, 5/10$ for $n=500$). 
	Denote $k$ as the number of components that have change points, i.e., $\delta_{n,1} = \cdots = \delta_{n,k} \ne 0$.
	Two types of the mean-shift signal are considered: $k=1$ for sparse signal and $k=50$ for dense signal. 
	Due to the space limit, we only present the sparse alternative case of $k=1$ in this section, and results of dense signal for $\psi$ can be found in the SM Section~\ref{subsec:additional_tabs_figs}. To analyze the power under $H_1$, we fix $n=500$, $p=600$, $\us=40$ and the significance level $\alpha=0.05$.  
	
	We first investigate the impact of change point location and distribution to our test. Figure~\ref{fig:power_our_compare_subfig:our} shows the empirical power curves v.s.\ the signal strength $|\delta_n|_\infty = |\delta_{n,1}|$. 
	In all cases, the powers monotonically increase and eventually reach 1 as $|\delta_{n}|_\infty$ gets large enough.
	Comparing the three curves corresponding to ctm-Gaussian distribution (I) in Figure~\ref{fig:power_our_compare_subfig:our}, we observe that change points closer to boundaries are harder to detect at the same signal strength. 
	When we narrow down to the four curves corresponding to $t_m = 1/10$, we can see the distributional influence: Gaussianity and cross-sectional dependence is helpful to improve the power. More simulation results of powers can be found in Table~\ref{tab:power} in the SM Section~\ref{subsec:additional_tabs_figs}. 
	
	\begin{figure}
		\centering
		\subfigure[]{
			\includegraphics[trim=10 0 10 40,clip,width=0.31\textwidth,height=0.23\textheight]{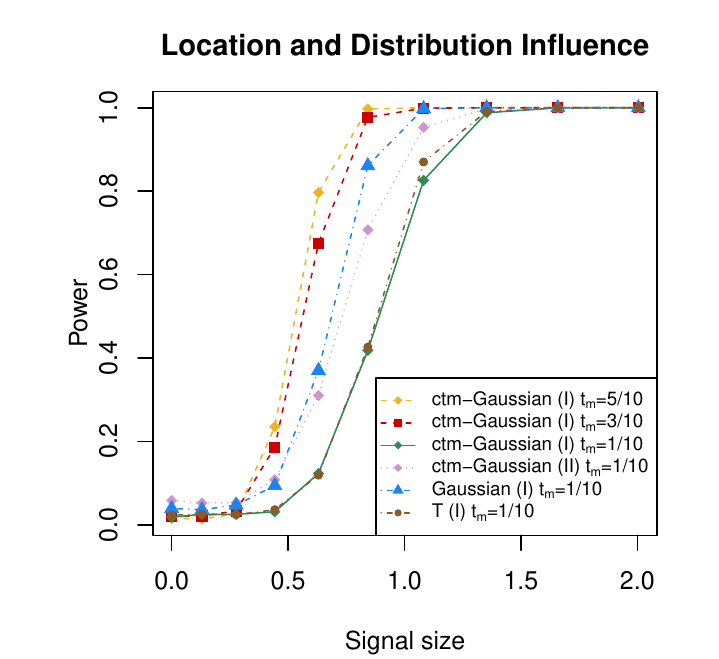}
			\label{fig:power_our_compare_subfig:our}}
		\subfigure[]{
			\includegraphics[trim=10 0 15 40,clip, width=0.31\textwidth,height=0.23\textheight]{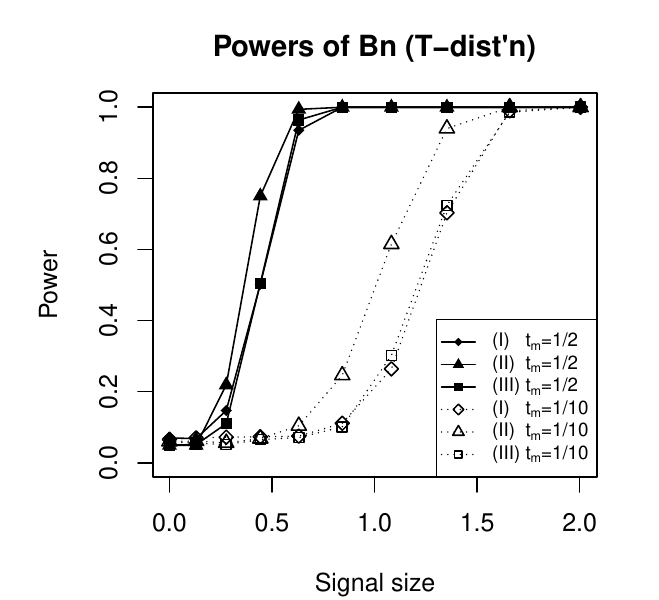}
			\label{fig:power_our_compare_subfig:a}}
		\subfigure[]{
			\includegraphics[trim=10 0 20 40,clip, width=0.31\textwidth,height=0.23\textheight]{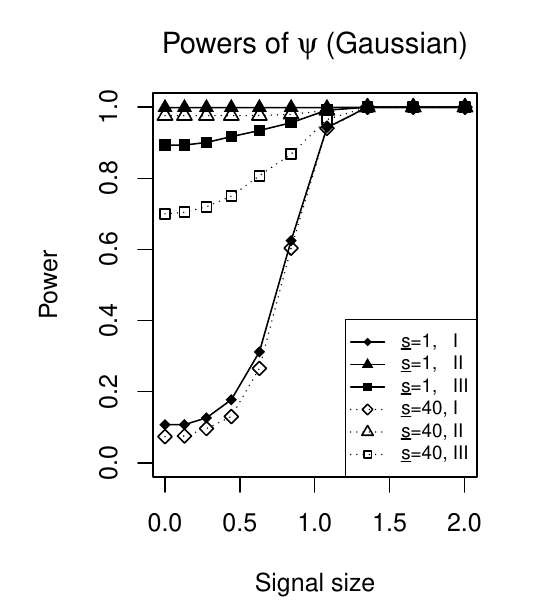}
			\label{fig:power_our_compare_subfig:b}}
			\vspace{-0.4cm}
		\caption{Power curves under sparse alternative: (a) $T_n^*$ (our) under selected distributions, covariances and $t_m=1/10, 3/10, 5/10$; (b) $B_n$ \citep{jirak2015} under $t_6$ distribution, three covariances and $t_m = 1/2, 1/10$; (c)  original and improved $\psi$ (i.e., $\us=1$ and $\us=40$) \citep{enikeevaharchaoui2014} under Gaussian distribution, three covariances and $t_m=1/2$. Parameters: $n=500, p=600$.}
		\label{fig:power_our_compare} 
	\end{figure}

	Next, we compare our method with $B_n$ of \cite{jirak2015} and $\psi$ of \cite{enikeevaharchaoui2014}. Figure~\ref{fig:power_our_compare_subfig:a} shows power trends of $B_n$ under $t$-distributed data for $k=1$ when $t_m=1/2,1/10$. The test $B_n$ performs better for central change point ($t_m=1/2$) than boundary change point ($t_m=1/10$). Compared to Figure~\ref{fig:power_our_compare_subfig:our}, we see that boundary change point brings more challenge to $B_n$ than to our test because our powers increase faster than $B_n$ under the same setup.
	Table~\ref{tab:Jirak_power} in the SM Section~\ref{subsec:additional_tabs_figs} gives a more detailed power report of $B_n$ in all scenarios under sparse $H_1$. We note that although $B_n$ returns slightly higher power than ours at $t_m=1/2$, it is computed with true long-run variance and it tends to over reject $H_0$ (i.e., size distortion).
	Figure~\ref{fig:power_our_compare_subfig:b} displays power trends of the $\psi$ ($\us=1$, no boundary removal) and $\psi$-improved (boundary removal with $\us=40$) for Gaussian distributed data at $t_m=1/2$. Neither $\psi$ test has valid power curve when the independent covariance assumption is violated. Furthermore, unreported results (due to the space concern) show invalid power curves in all other non-Gaussian distributed data. It is unsurprising since $\psi$ suffers from serious size distortion (cf.\ Table~\ref{tab:compareH0_size_nominal05} in the SM Section~\ref{subsec:additional_tabs_figs}). 
	We refer to Table~\ref{tab:Psi_power} in the SM Section~\ref{subsec:additional_tabs_figs} for complete power reports of $\psi$ in Gaussian scenarios with independent components under both sparse and dense $H_1$.

	\subsubsection{Performance of the location estimators}
	\label{subsubsec:location_simu}
	Now we examine the performance of our location estimators $t_{\hat{m}_{1/2}}$ and $t_{\hat{m}_0}$ under sparse alternative where $t_m = 1/10, 3/10, 5/10$. The performance measure is the root-mean-square error (RMSE). 
	Figure~\ref{fig:loc} shows comparison between $t_{\hat{m}_{1/2}}$ and $t_{\hat{m}_0}$ across different change point locations and signal sizes $|\delta_{n}|_\infty$. 
	First, Figure~\ref{fig:loc_subfig:a} illustrates that boundary change points (such as $t_m=1/10$) are harder to estimate as the RMSEs are uniformly larger than that for $t_m=5/10$. Second, as implied by Theorem~\ref{thm:rate_location_estimator} and \ref{thm:rate_location_estimator_nonstationary}, RMSEs of $t_{\hat{m}_{0}}$ are smaller than that of $t_{\hat{m}_{1/2}}$ at $t_m = 1/2$ because $Z_{\theta,n}(s)$ assigns less weights to the boundary points for smaller values of $\theta$. This is also empirically confirmed by Figure~\ref{fig:loc_subfig:b}: $t_{\hat{m}_{1/2}}$ is more in  favor of boundary points when $|\delta_{n}|_\infty=0$, and $t_{\hat{m}_0}$ slightly leans to the center when change ($|\delta_{n}|_\infty= 0.842$) is not in the middle of sequence.

	\begin{figure}
		\centering
		\subfigure[]{
			\includegraphics[trim=0 10 10 46.33,clip, width=0.3\textwidth]{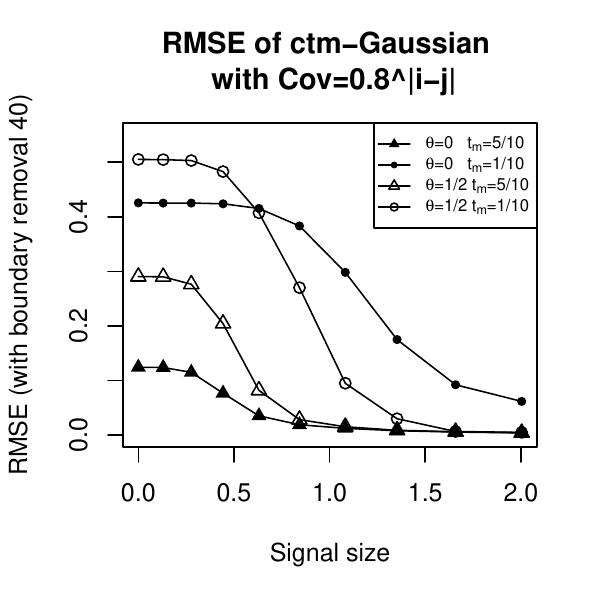}
			\label{fig:loc_subfig:a}}
		\hspace{0.15in}
		\subfigure[]{
			\includegraphics[trim=0 10 0 50,clip, width=0.6\textwidth]{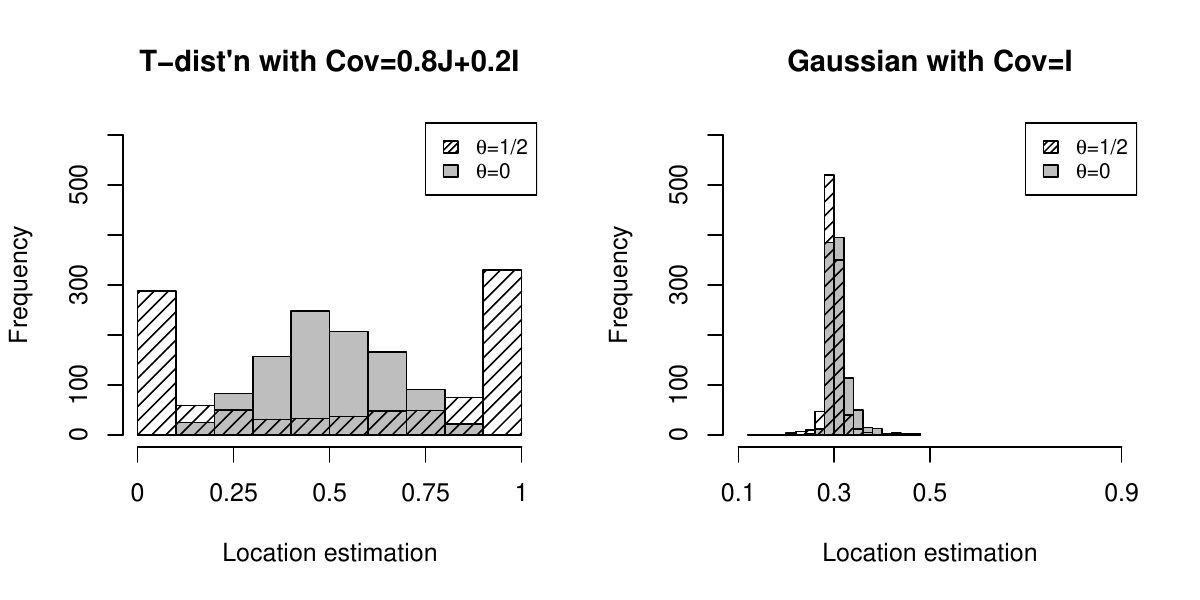}
			\label{fig:loc_subfig:b}}
			\vspace{-0.4cm}
		\caption{Comparison of our two estimators $t_{\hat{m}_{1/2}}$ and $t_{\hat{m}_0}$: (a) RMSEs v.s.\ $|\delta_{n,1}|$ for $t_m = 1/10, 5/10$ under ctm-Gaussian distribution with Covariance III; (b) histograms of estimated $t_m = 3/10$ over 1000 simulation for two scales of $\theta$ at $\delta_{n,1}=0$ under $t_6$ distribution with Covariance II (left) and at $\delta_{n,1}=0.842$ under Gaussian distribution with Covariance I (right). Parameters: $n=500, p=600$. }
		\label{fig:loc} 
	\end{figure}
	
	Next, we compare our estimators with \cite{wangsamworth2017} and \cite{chofryzlewicz2015}.	In \cite{wangsamworth2017}, a projection based estimator \texttt{Inspect} is proposed.	Theoretical analysis of this algorithm requires the data follow Gaussian distribution. In \cite{chofryzlewicz2015}, the proposed \texttt{SBS} estimator is the maximizer of threshold $\ell^1$-aggregated CUSUM statistics after thresholding. This method is sensitive to threshold tuning parameters selected by bootstrap. Both approaches allow multiple change points. For now, we first compare with their single change point versions (see R packages \texttt{InspectChangepoint} and \texttt{hdbinseg}). We also include a truncated version of our location estimator $\hat{m}_\theta = \argmax_{\us \le s < n-\us} |Z_{\theta,n}(s)|_\infty$ (cf. Remark~\ref{rmk:boundary_removal}) for fair comparison.
	Both $k=1, 50$ are considered, where $k$ represents signal density ($\delta_{n,1} = \cdots = \delta_{n,k} \ne 0$).

	Figure~\ref{fig:loc_subfig:wang} compares our non-truncated $t_{\hat{m}_0}$ with \texttt{Inspect} that has no boundary removal, where $t_m=3/10$ and data are $t_6$ distributed with Covariance II. The RMSEs of our estimator is uniformly smaller than \texttt{Inspect} if $k=1$ or data do not have an isotropic Gaussian distribution.
	Figure~\ref{fig:loc_subfig:cho} shows non-monotone RMSEs of \texttt{SBS} returned from \texttt{hdbinseg} when $\us = 40, t_m=5/10, k=1$ and data are from ctm-Gaussian with Covariance II. We also note that the empirical detection rate of change point drops from $91\%$ to $72\%$ when $|\delta_n|_\infty$ grows from 0.84 to 2. Although the \texttt{SBS} works well for $t_m=5/10,k=50$ or $t_m = 1/10,k=1$ (cf.\ SM Section~\ref{subsec:rmse_Supplementary material}), it means that large CUSUM values under sparse $H_1$ may lead to unreasonable selection of their threshold. The full RMSEs of our method with sparse signal and selected RMSEs of \cite{wangsamworth2017} and \cite{chofryzlewicz2015} are reported in Table~\ref{tab:RMSE_our}, \ref{tab:RMSE_Wang} and \ref{tab:RMSE_Cho} in the SM Section~\ref{subsec:rmse_Supplementary material}.

	\begin{figure}
		\centering
		\subfigure[]{
			\includegraphics[trim=0 10 280 55,clip,width=0.32\textwidth,height=0.23\textheight]{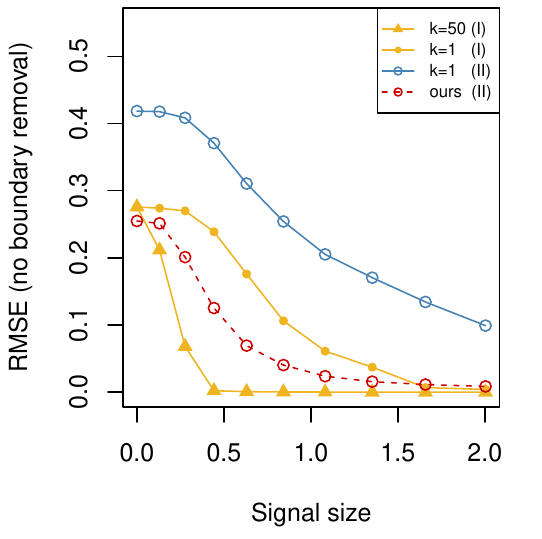}
			\label{fig:loc_subfig:wang}}
		\subfigure[]{
			\includegraphics[trim=250 10 0 55,clip,width=0.36\textwidth,height=0.23\textheight]{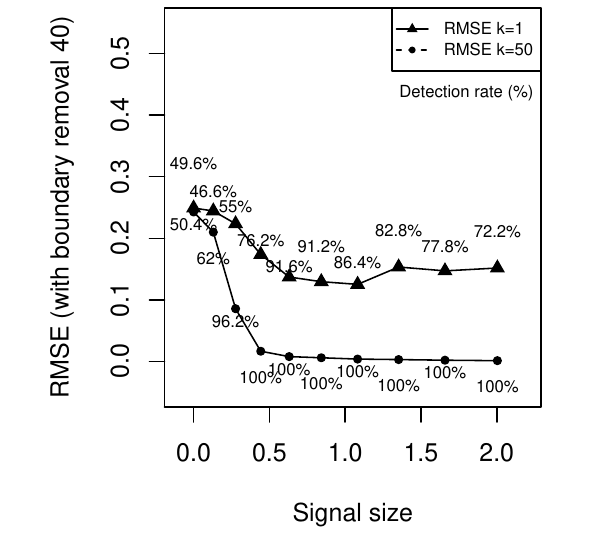}
			\label{fig:loc_subfig:cho}}
			\vspace{-0.4cm}
		\caption{RMSEs: (a) \texttt{Inspect} under $t_6$ distribution with Covariance II and $t_m = 3/10$; (b) \texttt{SBS} under ctm-Gaussian distribution with Covariance II and $t_m = 5/10$. Parameters: $n=500,p=600$.}
		\label{fig:loc_compare_iid} 
	\end{figure}


	\subsection{Multiple change points estimation using BABS}
	
	In the multiple change-point scenario, we first consider the $k$-th component of $\delta_n^{(k)}$ to have the same mean shift, i.e. $\delta_{n,1}^{(1)} = \delta_{n,2}^{(2)} = \dots = \delta_{n,\nu}^{(\nu)} = \delta \ne 0$.
	Since change point estimation can be viewed as a special case of clustering, the accuracy (consistency) can be measured by the Adjusted Rand Index (ARI) \citep{rand1971objective,hubert1985comparing}. We also report the average ARI over 500 simulations. The bootstrap resampling is $B = 200$.
	
	First, we provide simulation results for BABS using $t_6$ distribution with Covariance III as an illustrative example. We consider $n = 1000, p=1200$, $ \us = 40, \alpha = 0.05$ and the two change points $(m_1, m_2) = (300, 600)$. From Table~\ref{tab:mcp_identification_23} (left), we see that when signal is small (e.g.,\ $\delta = 0.317$), BABS cannot locate mean shift accurately. As signal gets larger, both the number and the locations of change points can be estimated more consistently, and ARI is also increasing to 1 (i.e.,\ perfect estimation).
	We further add one more change point, $(m_1, m_2, m_3) = (300, 600, 800)$. Table~\ref{tab:mcp_identification_23} (right) shows that the estimation is slightly worse under the same signal size in the left. This is because the effective sample size cuts down after each binary segmentation and one more multiple testing adjustment is needed in the stopping criterion of BABS. Nonetheless, our algorithm eventually detects all change points consistently when signal is large enough (e.g.,\ $\delta=2$).
	Figure~\ref{fig:mcp_identification_2} and \ref{fig:mcp_identification_3} in the SM Section~\ref{subsec:additional_tabs_figs} visualize this observation for the two cases, respectively.

	\begin{table}
		\caption{\label{tab:mcp_identification_23} Counts of estimated $\hat\nu$ and ARI along $\delta$ in multiple change point setup for BABS, where $(m_1, m_2) = (300, 600)$ (left) and $(m_1, m_2, m_3) = (300, 600, 800)$ (right). The numbers of correct estimation in each column are highlighted in bold. (Parameters: $n = 1000, p=1200$, $ \us = 40, \alpha = 0.05$ and data are from $t_6$-distribution with Covariance III.) 
		}
		\setlength\tabcolsep{2pt}
		\fbox{
			\begin{tabular}{cl|rrrrr| |rrrrr}
				\multicolumn{2}{c|}{} &\multicolumn{5}{c| |}{$(m_1, m_2) = (300, 600)$} & \multicolumn{5}{c}{$(m_1, m_2, m_3) = (300, 600, 800)$}\\ \hline
				\multicolumn{2}{c|}{$\delta$}   & 0   & 0.317 & 0.733 & 1.282 & 2.004  & 0   & 0.317 & 0.733 & 1.282 & 2.004  \\ \hline
				\multirow{4}{*}{\begin{tabular}[c]{@{}c@{}}\\Estimated\\ number of \\ change\\ points\end{tabular}} &0   & \textbf{497}     & 378     & 1       & 0       & 0   & \textbf{491}     & 360     & 0       & 0       & 0  \\
				&1   & 3       & 117     & 13      & 0       & 0     &  9       & 133     & 5       & 0       & 0 \\
				&2   & 0       & \textbf{5}       & \textbf{458}     & \textbf{464}     & \textbf{470}  &0       & 7       & 141     & 0       & 0 \\
				&3   & 0       & 0       & 26      & 35      & 30     & 0       & \textbf{0}       & \textbf{328}     & \textbf{455}     & \textbf{474} \\
				&4   & 0       & 0       & 2       & 1       & 0    	  &0       & 0       & 25      & 40      & 25  \\
				&5   & 0       & 0       & 0       & 0       & 0        &0       & 0       & 1       & 5       & 1    \\ \hline
				\multicolumn{2}{c|}{Sum}                                        & 500   & 500   & 500   & 500   & 500  & 500   & 500   & 500   & 500   & 500 \\ \hline
				\multicolumn{2}{c|}{ARI}                                        & 0.994 & 0.128 & 0.935 & 0.978 & 0.989    & 0.982 & 0.106 & 0.871 & 0.973 & 0.989\\                
			\end{tabular}}
	\end{table}
	
	Next, we experiment the setup in Section 5.3 of \cite{wangsamworth2017} to compare BABS with \texttt{Inspect}. Consider $n=2000, p=200, (m_1, m_2, m_3) = (500,1000,1500)$ and data are Gaussian distributed with identity cross-sectional covariance. We consider the complete-overlap mean structure, i.e., changes occur in the same $k$ coordinates, and set $(|\delta_{n}^{(1)}|_2 , |\delta_{n}^{(2)}|_2 , |\delta_{n}^{(3)}|_2) = (\delta, 2\delta, 3\delta)$ for signal strength $\delta \in \{ 0.4, 0.6\}$. Table~\ref{tab:mcp_compare_to_wang} below and Figure~\ref{fig:compare_binseg_inspect} in the SM Section~\ref{subsec:additional_tabs_figs} summarize estimation performance of our BABS  and the \texttt{Inspect}. If $k = 40$, $|\delta_{n}^{(i)}|_\infty = i \cdot \delta/\sqrt{k}$ is too small for our method to detect. However, when $k=1$ such that $\delta_{n}^{(i)}, i=1,2,3$ is sparse (with the $\ell^2$-norm unchanged), then our algorithm shows superiority in terms of both $\hat\nu$ and ARI. Again, our BABS algorithm has advantage when the $\ell^\infty$-norm of signal is bounded below and likewise when data are non-Gaussian or cross-sectionally dependent as shown in Section~\ref{subsubsec:location_simu} and \ref{subsubsec:power_comparison}. For comparison with \cite{chofryzlewicz2015, cho2016change, jirak2015}, we refer to Section 5.3 of \cite{wangsamworth2017} for a comprehensive simulation study.

	\begin{table}
		\caption{\label{tab:mcp_compare_to_wang} Comparison between BABS and \texttt{Inspect} for complete-overlap mean structure of change points that are dense ($k=40$) and sparse ($k=1$).  Parameters: $n=2000, p=200, (m_1, m_2, m_3) = (500,1000,1500)$ and $(|\delta_{n}^{(1)}|_2 , |\delta_{n}^{(2)}|_2 , |\delta_{n}^{(3)}|_2) = (\delta, 2\delta, 3\delta)$ for signal strength $\delta \in \{ 0.4, 0.6\}$.}
		\setlength\tabcolsep{2pt}
		\fbox{
		\begin{tabular}{cc|rrrr|rrrr}
			\multicolumn{2}{c|}{\multirow{3}{*}{\begin{tabular}[c]{@{}c@{}}Parameters\\ in \\ methods\end{tabular}}} & \multicolumn{4}{c|}{$k=40$}                          & \multicolumn{4}{c}{$k=1$}                           \\
			\multicolumn{2}{c|}{}                                                                                    & \multicolumn{2}{c}{$\delta=0.4$} & \multicolumn{2}{c|}{$\delta=0.6$} & \multicolumn{2}{c}{$\delta=0.4$} & \multicolumn{2}{c}{$\delta=0.6$} \\
			\multicolumn{2}{c|}{}                                                                                    & \texttt{BABS}           & \texttt{Inspect}       & \texttt{BABS}         & \texttt{Inspect}        & \texttt{BABS}           & \texttt{Inspect}        & \texttt{BABS}          & \texttt{Inspect}        \\ \hline
			\multirow{7}{*}{\begin{tabular}[c]{@{}c@{}}Estimated\\ number\\ of\\ change\\ points\end{tabular}}  & 0 & 0           & 0          & 0          & 0          & 0           & 0          & 0          & 0          \\
			& 1 & 16          & 0          & 0          & 0          & 0           & 0          & 0          & 0          \\
			& 2 & 329         & 426        & 194        & 182        & 14          & 466        & 0          & 172        \\
			& \textbf{3} & \textbf{121}         & \textbf{71}         & \textbf{206}        & \textbf{295}        & \textbf{399}         & \textbf{30}         & \textbf{438}        & \textbf{305}        \\
			& 4 & 28          & 3          & 83         & 17         & 78          & 4          & 56         & 19         \\
			& 5 & 6           & 0          & 16         & 6          & 7           & 0          & 6          & 4          \\
			& 6 & 0           & 0          & 1          & 0          & 2           & 0          & 0          & 0          \\ \hline
			\multicolumn{2}{c|}{Sum}                                                                                 & 500         & 500        & 500        & 500        & 500         & 500        & 500        & 500        \\ \hline
			\multicolumn{2}{c|}{ARI}                                                                                 & 0.591       & 0.711      & 0.683      & 0.854      & 0.941       & 0.704      & 0.975      & 0.883     \\
		\end{tabular}}
	\end{table}

	\section{Real data applications}\label{sec:real_app}

	\subsection{Array CGH data}

	The microarray dataset \texttt{aCGH} from the \texttt{ecp} R package in \cite{james2013ecp} consists of $p=43$ individuals with bladder tumors. There are $n=2215$ log-intensity-ratio fluorescent measurements of DNA segments that share almost identical change points because the individuals have the same disease. We set $B=1000, \alpha=0.05, \us = 60$. Our BABS  finds 27 change points in the copy-number that are marked as red vertical dashed lines in Figure~\ref{fig:binseg_real}. Compared to \texttt{Inspect}, which identifies 254 change points by using the default threshold, our discovery is more reasonable and stable under the existence of outliers (e.g.,\ the segment (1724,1836) or (1965, 2044)).

	\begin{figure}
		\centering
		\includegraphics[trim = 0 25 0 55, clip, width = 0.83\textwidth]{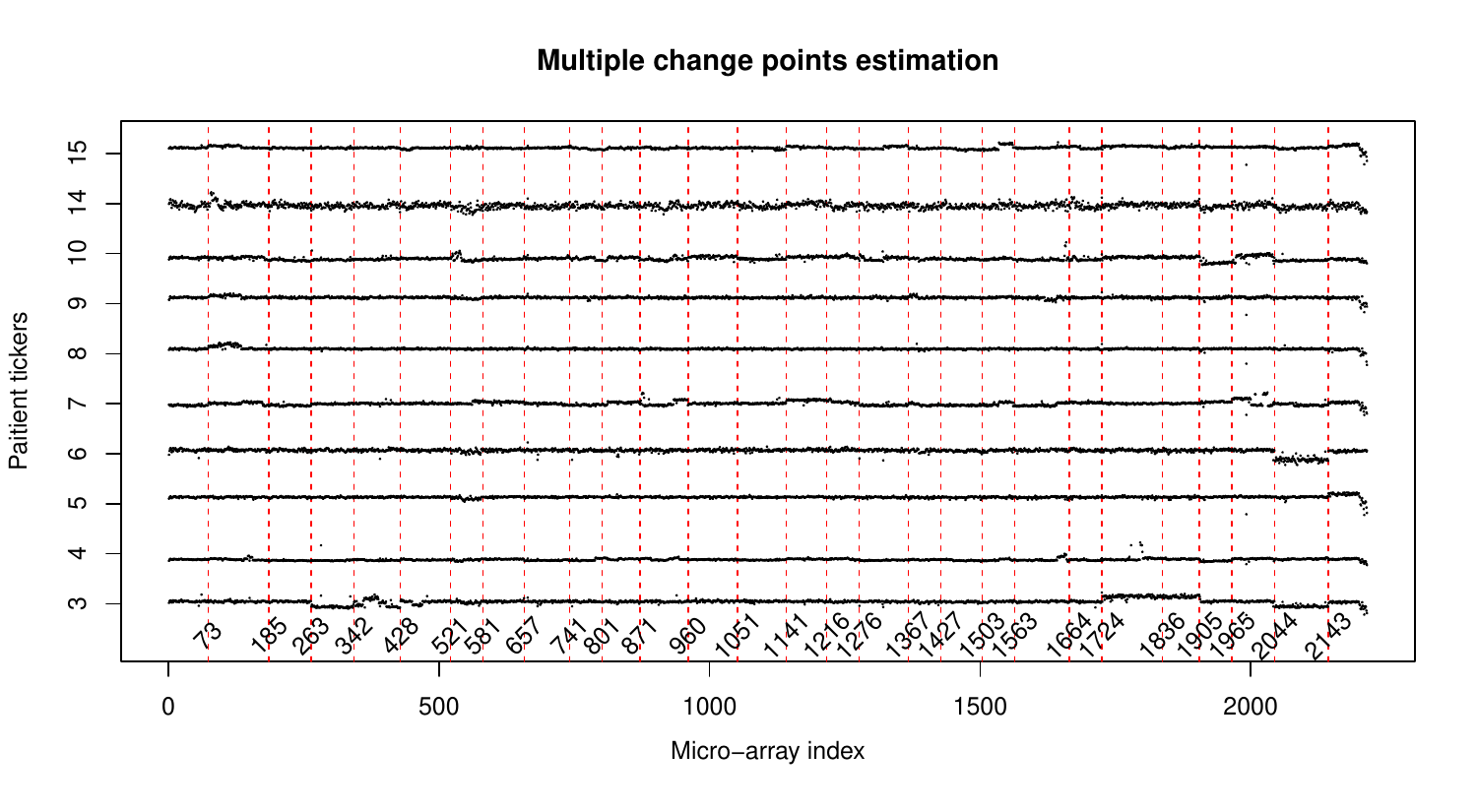}
		\caption{Change point estimation in the log-intensity-ratio fluorescent measurements of aCGH data (the first 10 patients displayed) estimated by BABS using whole \texttt{aCGH} data. Parameters: $n=2215, p=43, B=1000, \alpha=0.05, \us = 60$. }
		\label{fig:binseg_real}
	\end{figure}
	
	\subsection{Stock return data}
	
	We run our block bootstrap CUSUM test and location estimators to stock return data that is available on \url{https://finance.yahoo.com}. The dataset (read through the R package \texttt{BatchGetSymbols}) contains daily closing prices of $p=440$ stocks from the S\&P500 index during the trading days between August 27, 2007 to August 24, 2009 ($n=503$ time points). The daily closing prices are transformed to log scales due to their multiplicative nature. For this dataset, the CUSUM test statistic $T_n = 38.699$. We set $\us = \lfloor 0.05n \rfloor = 25$, bootstrap repeats $B = 200$ and the block sizes $M = 1, 2, 5, 10$ for our block bootstrap calibration. Table~\ref{fig: Realdata:bootstrap_quantiles} shows the (conditional) quantiles of our block bootstrap CUSUM test statistic. Compared with the critical value 8.861 corresponding to the 99\%-quantile for $M=10$, we reject $H_0$. In addition, $\hat{m}_{1/2}=265$ corresponds to September 12, 2008, the last trading day before Lehman Brothers Holdings Inc.\ declared bankruptcy on September 15, 2008. Figure~\ref{fig: Realdata_finance} in the SM Section~\ref{subsec:additional_tabs_figs} plots the top 5 stocks in this financial crash. 
	
	\begin{table}
		\caption{\label{fig: Realdata:bootstrap_quantiles} Quantiles of bootstrapped CUSUM test statistic in the stock return data on log-scale for different block sizes.}		
		\fbox{
		\begin{tabular}{c|cccc}
			& $M=1$    & $M=2$    & $M=5$   & $M=10$   \\
			\hline
			$q_{0.90}$ & 2.350 & 3.380 & 5.219 & 6.981 \\
			$q_{0.95}$ & 2.470 & 3.860 & 5.464 & 7.327 \\
			$q_{0.99}$ & 2.782 & 4.580 & 5.746 & 8.861 \\
		\end{tabular} }
	\end{table}
	
	A binary segmentation extension based on the block bootstrap CUSUM test is considered as well. We implement this extension with  $\hat{m}_{1/2}$ and $\hat{m}_0$ separately, whose estimations are shown in Figure~\ref{fig:Realdata_finance_mcp} on top and bottom, respectively. We set $\us = 25$, $B=400$, $M=5$ and $\alpha = 0.05$ for both scenarios.
	Overall, the two estimators share common time points on detection when mean-shift signals are large enough. There are seven overlapping change points identified by both algorithms, and the one using $\hat{m}_{1/2}$ additionally locates June 12 and July 21 in 2008 but misses October 17, 2008. That is, in the interval between April 21 and September 12 in 2008, $\hat{m}_{1/2}$ is sensitive to change points on the boundary points  (i.e., June 12 and July 21, 2008). However, $\hat{m}_0$ is sensitive to the middle change point, namely October 17, 2008, in the interval between September 12 and November 26 in 2008. This exactly reflects our observation in the simulation. 
	
	\begin{figure}
		\centering
		\includegraphics[trim = 0 24 0 55, clip, height=0.26\textheight]{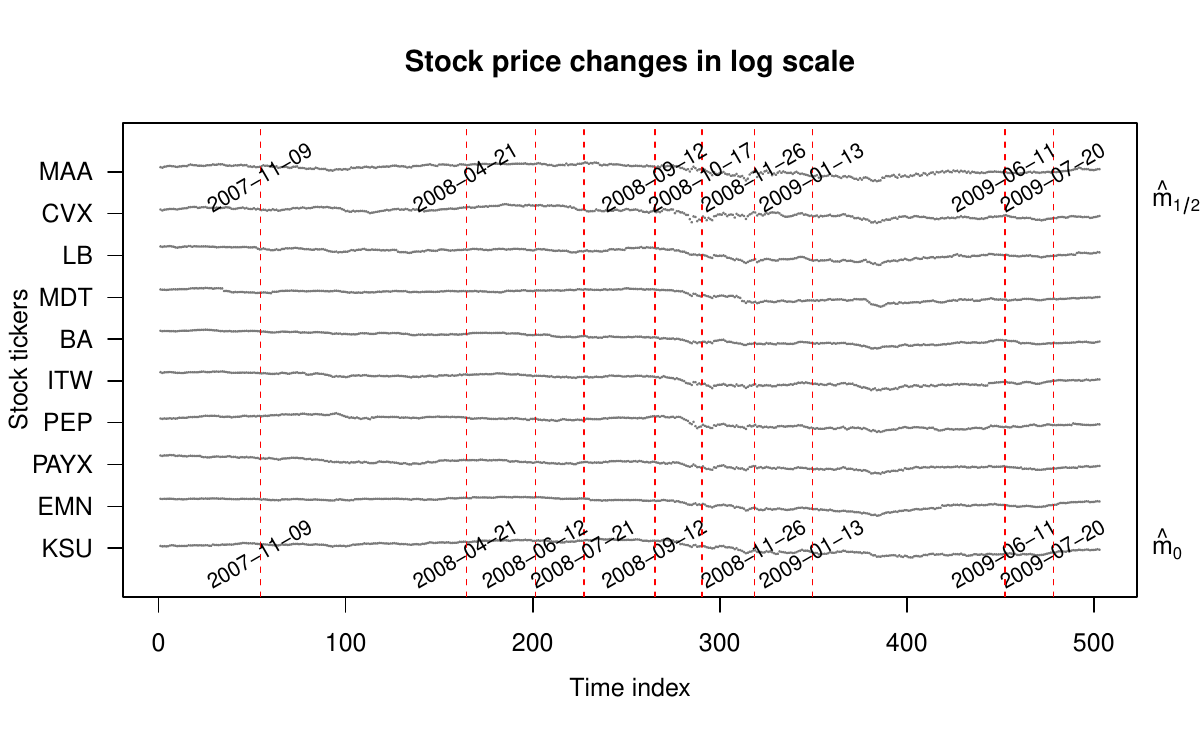}
		\vspace{-0.3cm}
		\caption{Change points detected by extension of BABS using the block CUSUM bootstrap test and two different location estimators $\hat{m}_{1/2}$ and $\hat{m}_{0}$ in the stock return data on log-scale. Parameters: $\us = 25$, $B=400$, $M=5$ and $\alpha = 0.05$.}
		\label{fig:Realdata_finance_mcp}
	\end{figure}
	
	We would like to make two comments for this example. First, the estimator $\hat{m}_{1/2}$ is compatible with our test statistic $T_n$ using stationary weight $\theta = 1/2$, while $\hat{m}_0$ is also a reasonable choice since stock prices are likely to be integrated. Therefore, we evaluate both of them in this stock prices dataset.
	Second, neither of the two algorithms identifies change point between January 13, 2009 and June 11, 2009, even though there seems to be fluctuations in the mean return. One possible reason is that there exists non-synchronous change points (e.g.,\ around time index at 380), which are not estimable. However, this is a common issue in multiple change-point analysis and it is necessary to make some minimal separation or spacing assumptions, cf. \cite{fryzlewicz2014,chofryzlewicz2015,cho2016change,BarigozziChoFryzlewicz2017}. 

		\section*{Acknowledgments}
		The authors would like to thank two anonymous referees, an Associate Editor, and the Co-Editor Simon Wood for their careful and constructive comments that greatly improved the quality of this paper. This work was completed in part with the high-performance computing resource provided by the Illinois Campus Cluster Program at UIUC. X. Chen's research was supported in part by NSF DMS-1404891, NSF CAREER Award DMS-1752614, UIUC Research Board Awards (RB17092,  RB18099), and a Simons Fellowship.  X. Chen acknowledges that part of this work was carried out at the MIT Institute for Data, System, and Society (IDSS).

	\bibliographystyle{agsm}
	{\footnotesize
		\bibliography{hdcp}}

@article{WangYuRinaldo2017,
	Author = {Wang, Daren and Yu, Yi and Rinaldo, Alessandro},
	Date-Added = {2020-02-24 03:08:35 +0000},
	Date-Modified = {2020-02-24 03:09:19 +0000},
	Journal = {Preprint arXiv: 1712.09912},
	Title = {Optimal Covariance Change Point Localization in High Dimension},
	Year = {2017}}

@article{zhangcheng2014,
	Author = {Zhang, Xianyang and Cheng, Guang},
	Date-Added = {2017-10-06 05:07:28 +0000},
	Date-Modified = {2018-04-28 20:44:49 +0000},
	Journal = {Bernoulli},
	Number = {4A},
	Pages = {2640-2675},
	Title = {Gaussian approximation for high dimensional vector under physical dependence},
	Volume = {24},
	Year = {2018}}

@article{zhangwu2016a,
	Author = {Zhang, Danna and Wu, Wei Biao},
	Date-Added = {2017-10-06 05:07:09 +0000},
	Date-Modified = {2018-04-28 20:42:49 +0000},
	Journal = {Annals of Statistics},
	Number = {5},
	Pages = {1895-1919},
	Title = {Gaussian approximation for high dimensional time series},
	Volume = {45},
	Year = {2017}}

@book{ginenickl2016,
	Author = {Gin\'e, Evarist and Nickl, Richard},
	Date-Modified = {2020-08-25 00:28:16 -0500},
	Publisher = {Cambridge: Cambridge University Press},
	Title = {Mathematical foundations of infinite-dimensional statistical models},
	Year = {2015}}

@article{HarizWylieZhang2007_AoS,
	Author = {Hariz, Samir Ben and Wylie, Jonathan J. and Zhang, Qiang},
	Date-Modified = {2020-08-25 00:30:47 -0500},
	Journal = {Annals of Statistics},
	Number = {4},
	Pages = {1802-1826},
	Title = {Optimal rate of convergence for nonparametric change-point estimators for nonstationary sequences},
	Volume = {35},
	Year = {2007}}

@book{LeadbetterLindgrenRootzen_1983,
	Author = {Leadbetter, M. R. and Lindgren, Georg and Rootz\'en, Holger},
	Publisher = {New York: Springer-Verlag},
	Title = {Extremes and related properties of random sequences and processes},
	Year = {1983}}

@article{BarigozziChoFryzlewicz2017,
	Author = {Barigozzi, Matteo and Cho, Haeran and Fryzlewicz, Piotr},
	Date-Modified = {2020-08-24 23:07:04 -0500},
	Journal = {Journal of Econometrics},
	Pages = {187-225},
	Title = {Simultaneous multiple change-point and factor analysis for high-dimensional time series},
	Volume = {206},
	Number = {1},
	Year = {2018}}

@article{Lorden1971_AoMS,
	Author = {Lorden, G.},
	Date-Modified = {2020-08-25 00:46:19 -0500},
	Journal = {Annals of Mathematical Statistics},
	Number = {6},
	Pages = {1897-1908},
	Title = {Procedures for reacting to a change in distribution},
	Volume = {42},
	Year = {1971}}

@article{Lai2001_SS,
	Author = {Lai, Tze Leung},
	Journal = {Statistica Sinica},
	Number = {2},
	Pages = {303-351},
	Title = {Sequential analysis: {S}ome classical problems and new challenges},
	Volume = {11},
	Year = {2001}}

@article{WoodallNcube2985_Technometrics,
	Author = {Woodall, William and Ncube, Matoteng},
	Journal = {Technometrics},
	Number = {3},
	Pages = {285-292},
	Title = {Multivariate {CUSUM} quality-control procedures},
	Volume = {27},
	Year = {1985}}

@article{Walk1945_AoMS,
	Author = {Wald, Abraham},
	Date-Added = {2017-08-23 02:42:22 +0000},
	Date-Modified = {2020-08-25 01:06:22 -0500},
	Journal = {Annals of Mathematical Statistics},
	Number = {2},
	Pages = {117-186},
	Title = {Sequential tests of statistical hypotheses},
	Volume = {16},
	Year = {1945}}

@article{WaldWolfowitz1948_AoMS,
	Author = {Wald, Abraham and Wolfowitz, J.},
	Date-Added = {2017-08-23 02:23:57 +0000},
	Date-Modified = {2017-08-23 05:09:15 +0000},
	Journal = {Annals of Mathematical Statistics},
	Pages = {326-339},
	Title = {Optimum character of the sequential probability ratio test},
	Volume = {19},
	Number = {3},
	Year = {1948}}

@book{qiu2013_SPC,
	Author = {Qiu, Peihua},
	Date-Added = {2017-08-23 02:17:32 +0000},
	Date-Modified = {2017-08-23 02:18:31 +0000},
	Publisher = {Boca Raton: CRC Press},
	Title = {Introduction to statistical process control},
	Year = {2014}}

@article{chenxuwu2013a,
	Author = {Chen, Xiaohui and Xu, Mengyu and Wu, Wei Biao},
	Date-Added = {2017-08-09 05:43:25 +0000},
	Date-Modified = {2020-08-24 23:57:12 -0500},
	Journal = {Annals of Statistics},
	Number = {6},
	Pages = {2994-3021},
	Title = {Covariance and precision matrix estimation for high-dimensional time series},
	Volume = {41},
	Year = {2013}}

@article{caizhangzhou2010a,
	Author = {Cai, T. Tony and Zhang, Cun-Hui and Zhou, Harrison H.},
	Date-Added = {2017-08-09 05:43:03 +0000},
	Date-Modified = {2020-08-24 23:37:20 -0500},
	Journal = {Annals of Statistics},
	Number = {4},
	Pages = {2118-2144},
	Title = {Optimal Rates of Convergence for Covariance Matrix Estimation},
	Volume = {38},
	Year = {2010}}

@article{caizhou2011a,
	Author = {Cai, T. Tony and Zhou, Harrison H.},
	Date-Added = {2017-08-09 05:43:03 +0000},
	Date-Modified = {2020-08-24 23:39:11 -0500},
	Journal = {Annals of Statistics},
	Number = {5},
	Pages = {2389-2420},
	Title = {Optimal Rates of Convergence for Sparse Covariance Matrix Estimation},
	Volume = {40},
	Year = {2012}}

@article{bickellevina2008a,
	Author = {Bickel, Peter J. and Levina, Elizaveta},
	Date-Added = {2017-08-09 05:42:33 +0000},
	Date-Modified = {2020-08-24 23:19:49 -0500},
	Journal = {Annals of Statistics},
	Number = {6},
	Pages = {2577-2604},
	Title = {Covariance Regularization by Thresholding},
	Volume = {36},
	Year = {2008}}

@article{bickellevina2008b,
	Author = {Bickel, Peter J. and Levina, Elizaveta},
	Date-Added = {2017-08-09 05:42:33 +0000},
	Date-Modified = {2020-08-24 23:20:48 -0500},
	Journal = {Annals of Statistics},
	Number = {1},
	Pages = {199-227},
	Title = {Regularized Estimation of Large Covariance Matrices},
	Volume = {36},
	Year = {2008}}

@book{muirhead1982,
	Author = {Muirhead, Robb J.},
	Date-Added = {2017-07-05 01:22:47 +0000},
	Date-Modified = {2020-08-25 00:47:33 -0500},
	Publisher = {New Jersey: John Wiley \&\ Sons, Inc.},
	Title = {Aspects of multivariate statistical theory},
	Year = {1982}}

@article{chernoffzacks1964,
	Author = {Chernoff, H and Zacks, S},
	Date-Added = {2017-07-04 02:40:37 +0000},
	Date-Modified = {2017-07-04 02:40:37 +0000},
	Journal = {Annals of Mathematical Statistics},
	Number = {3},
	Pages = {999-1018},
	Title = {Estimating the Current Mean of a Normal Distribution which is Subjected to Changes in Time},
	Volume = {35},
	Year = {1964}}

@article{hinkley1970,
	Author = {Hinkley, David V.},
	Date-Added = {2017-07-04 02:40:30 +0000},
	Date-Modified = {2017-07-04 02:40:30 +0000},
	Journal = {Biometrika},
	Number = {1},
	Pages = {1-17},
	Title = {Inference about the change-point in a sequence of random variables},
	Volume = {57},
	Year = {1970}}

@book{BrodskyDarkhovsky1993_Springer,
	Author = {Brodsky, B E. and Darkhovsky, B. S.},
	Date-Added = {2017-07-04 02:39:50 +0000},
	Date-Modified = {2020-08-24 23:25:08 -0500},
	Publisher = {Netherlands: Springer},
	Title = {Nonparametric methods in change point problems},
	Year = {1993}}

@article{yao1987_AoS,
	Author = {Yao, Yi-Ching},
	Date-Added = {2017-07-04 02:39:36 +0000},
	Date-Modified = {2020-08-25 01:10:51 -0500},
	Journal = {Annals of Statistics},
	Number = {3},
	Pages = {1321-1328},
	Title = {Approximating the distribution of the maximum likelihood estimate of the change-point in a sequence of independent random variables},
	Volume = {15},
	Year = {1987}}

@article{bhattacharya1987_JMA,
	Author = {Bhattacharya, Pallab K.},
	Date-Added = {2017-07-04 02:39:02 +0000},
	Date-Modified = {2017-07-04 02:39:02 +0000},
	Journal = {Journal of Multivariate Analysis},
	Number = {2},
	Pages = {183-208},
	Title = {Maximum likelihood estimation of a change-point in the distribution of independent random variables: General multiparameter case},
	Volume = {23},
	Year = {1987}}

@article{bai1997,
	Author = {Bai, Jushan},
	Date-Added = {2017-07-04 02:25:38 +0000},
	Date-Modified = {2020-08-24 23:02:16 -0500},
	Journal = {The Review of Economics and Statistics},
	Number = {4},
	Pages = {551-563},
	Title = {Estimation of a change point in multiple regression models},
	Volume = {79},
	Year = {1997}}

@book{vandervaartwellner1996,
	Author = {van der Vaart, Ada W. and Wellner, Jon A.},
	Date-Added = {2017-06-23 03:36:18 +0000},
	Date-Modified = {2020-08-25 01:05:15 -0500},
	Publisher = {New York: Springer},
	Title = {Weak convergence and empirical processes: with applications to statistics},
	Year = {1996}}

@article{adamczak2010_JTP,
	Author = {Adamczak, Rados{\l}aw},
	Date-Added = {2017-06-23 02:00:10 +0000},
	Date-Modified = {2017-06-23 02:01:26 +0000},
	Journal = {Journal of Theoretical Probability},
	Number = {1},
	Pages = {85-108},
	Title = {A few remarks on the operator norm of random {T}oeplitz matrices},
	Volume = {23},
	Year = {2010}}

@article{adamczak2008,
	Author = {Adamczak, Rados{\l}aw},
	Date-Added = {2017-06-23 01:58:53 +0000},
	Date-Modified = {2020-08-24 22:57:11 -0500},
	Journal = {Electronic Journal of Probability},
	Number = {34},
	Pages = {1000-1034},
	Title = {A tail inequality for suprema of unbounded empirical processes with applications to {M}arkov chains},
	Volume = {13},
	Year = {2008}}

@book{ledouxtalagrand1991,
	Author = {Ledoux, Michel and Talagrand, Michel},
	Date-Added = {2017-06-23 01:58:44 +0000},
	Date-Modified = {2020-08-25 00:44:06 -0500},
	Publisher = {New York: Springer-Verlag},
	Title = {Probability in Banach spaces: isoperimetry and processes},
	Year = {1991}}

@article{cck2016a,
	Author = {Chernozhukov, Victor and Chetverikov, Denis and Kato, Kengo},
	Date-Added = {2016-04-05 20:01:01 +0000},
	Date-Modified = {2016-04-27 18:11:49 +0000},
	Journal = {Annals of Probability},
	Number = {4},
	Pages = {2309-2352},
	Title = {Central limit theorems and bootstrap in high dimensions},
	Volume = {45},
	Year = {2017}}

@article{chen2017+,
	Author = {Chen, Xiaohui},
	Date-Modified = {2018-04-22 03:49:03 +0000},
	Journal = {Annals of Statistics},
	Number = {2},
	Pages = {642-678},
	Title = {Gaussian and bootstrap approximations for high-dimensional {U}-statistics and their applications},
	Volume = {46},
	Year = {2018}}

@article{hall1991,
	Author = {Hall, Peter},
	Date-Added = {2016-10-28 20:23:27 +0000},
	Date-Modified = {2016-10-28 20:27:23 +0000},
	Journal = {Probability Theory and Related Fields},
	Pages = {447-455},
	Title = {On convergence rates of suprema},
	Volume = {89},
	Year = {1991}}

@article{cck2013,
	Author = {Chernozhukov, Victor and Chetverikov, Denis and Kato, Kengo},
	Date-Added = {2015-12-30 18:12:13 +0000},
	Date-Modified = {2015-12-30 18:12:55 +0000},
	Journal = {Annals of Statistics},
	Number = {6},
	Pages = {2786-2819},
	Title = {Gaussian approximations and multiplier bootstrap for maxima of sums of high-dimensional random vectors},
	Volume = {41},
	Year = {2013}}

@article{kokoszkaleipus2000,
	Author = {Kokoszka, Piotr and Leipus, Remigijus},
	Date-Modified = {2020-08-25 00:38:02 -0500},
	Journal = {Bernoulli},
	Number = {3},
	Pages = {513-539},
	Title = {Change-point estimation in {ARCH} models},
	Volume = {6},
	Year = {2000}}

@article{bghk2009,
	Author = {Berkes, Istv\'an and Gabrys, Robertas and Horv\'ath, Lajos and Kokoszka, Piotr},
	Date-Modified = {2020-08-24 23:19:36 -0500},
	Journal = {Journal of the Royal Statistical Society: Series B},
	Number = {5},
	Pages = {475-507},
	Title = {Detecting changes in the mean of functional observations},
	Volume = {71},
	Year = {2009}}

@article{ombaovonsachsguo2005,
	Author = {Hernando Ombao and Rainer von Sachs and Wensheng Guo},
	Date-Modified = {2020-08-25 00:49:05 -0500},
	Journal = {Journal of the American Statistical Association},
	Number = {470},
	Pages = {519-531},
	Title = {{SLEX} analysis of multivariate nonstationary time series},
	Volume = {100},
	Year = {2005}}

@article{auehormannhorvathreimher2009,
	Author = {Aue, Alexander and H{\"o}rmann, Siegfried and Horv{\'a}th, Lajos and Reimherr, Matthew},
	Date-Modified = {2020-08-24 23:00:56 -0500},
	Journal = {Annals of Statistics},
	Pages = {4046-4087},
	Title = {Break detection in the covariance structure of multivariate time series models},
	Volume = {37},
	Number = {6B},
	Year = {2009}}

@article{chofryzlewicz2015,
	Author = {Cho, Haeran and Fryzlewicz, Piotr},
	Date-Modified = {2020-08-25 00:10:00 -0500},
	Journal = {Journal of the Royal Statistical Society: Series B },
	Pages = {475-507},
	Title = {Multiple-change-point detection for high dimensional time series via sparsified binary segmentation},
	Volume = {77},
	Number = {2},
	Year = {2015}}

@article{jirak2015,
	Author = {Jirak, Moritz},
	Date-Modified = {2020-08-25 00:36:41 -0500},
	Journal = {Annals of Statistics},
	Number = {6},
	Pages = {2451-2483},
	Title = {Uniform change point tests in high dimension},
	Volume = {43},
	Year = {2015}}

@article{enikeevaharchaoui2014,
	Author = {Enikeeva, Farida and Harchaoui, Zaid},
	Date-Modified = {2020-08-25 00:13:39 -0500},
	Journal = {Annals of Statistics},
	Number = {4},
	Pages = {2051-2079},
	Title = {High-dimensional change-point detection under sparse alternatives},
	Volume = {47},
	Year = {2019}}

@book{csorgohorvath1997,
	Author = {Cs\"org\H{o}, M. and Horv\'ath, L},
	Date-Added = {2017-01-13 05:45:17 +0000},
	Date-Modified = {2020-08-25 00:12:18 -0500},
	Publisher = {New York: Wiley},
	Title = {Limit Theorems in Change-Point Analysis},
	Year = {1997}}

@article{fryzlewicz2014,
	Author = {Fryzlewicz, Piotr},
	Date-Added = {2017-01-13 05:43:46 +0000},
	Date-Modified = {2017-01-13 05:44:23 +0000},
	Journal = {Annals of Statistics},
	Number = {6},
	Pages = {2243-2281},
	Title = {Wild binary segmentation for multiple change-point detection},
	Volume = {42},
	Year = {2014}}

@article{page1955,
	Author = {Page, E. S.},
	Date-Added = {2017-01-13 05:41:16 +0000},
	Date-Modified = {2020-08-25 00:50:20 -0500},
	Journal = {Biometrika},
	Number = {3},
	Pages = {523-527},
	Title = {A test for a change in a parameter occurring at an unknown point},
	Volume = {42},
	Year = {1955}}

@book{resnick1987a,
	Author = {Resnick, Sidney I.},
	Date-Added = {2017-01-13 05:25:16 +0000},
	Date-Modified = {2020-08-25 00:57:15 -0500},
	Pages = {320},
	Publisher = {New York: Springer-Verlag},
	Title = {Extreme values, regular variation and point processes},
	Year = {1987}}

@article{garreau2016,
	Author = {Garreau, Damien and Arlot, Sylvain},
	Date-Modified = {2020-08-25 00:15:54 -0500},
	Journal = {Electronic Journal of Statistics},
	Number = {2},
	Pages = {4440-4486},
	Title = {Consistent change-point detection with kernels},
	Volume = {12},
	Year = {2018}}

@article{loader1996,
	Author = {Loader, Clive R},
	Date-Modified = {2020-08-25 00:45:19 -0500},
	Journal = {Annals of Statistics},
	Number = {4},
	Pages = {1667--1678},
	Publisher = {Institute of Mathematical Statistics},
	Title = {Change point estimation using nonparametric regression},
	Volume = {24},
	Year = {1996}}

@article{harchaoui2010,
	Author = {Harchaoui, Za{\i}d and L{\'e}vy-Leduc, C{\'e}line},
	Date-Modified = {2020-08-25 00:29:57 -0500},
	Journal = {Journal of the American Statistical Association},
	Number = {492},
	Pages = {1480-1493},
	Publisher = {Taylor \& Francis},
	Title = {Multiple change-point estimation with a total variation penalty},
	Volume = {105},
	Year = {2010}}

@article{carlstein1988,
	Author = {Carlstein, Edward},
	Date-Modified = {2020-08-24 23:48:53 -0500},
	Journal = {Annals of Statistics},
	Pages = {188--197},
	Title = {Nonparametric change-point estimation},
	Number = {1},
	Volume = {14},
	Year = {1988}}

@article{frick2014,
	Author = {Frick, Klaus and Munk, Axel and Sieling, Hannes},
	Date-Modified = {2020-08-25 00:14:02 -0500},
	Journal = {Journal of the Royal Statistical Society: Series B},
	Number = {3},
	Pages = {495--580},
	Publisher = {Wiley Online Library},
	Title = {Multiscale change point inference},
	Volume = {76},
	Year = {2014}}

@article{zhang2010,
	Author = {Zhang, Nancy R and Siegmund, David O and Ji, Hanlee and Li, Jun Z},
	Date-Modified = {2020-08-25 01:11:56 -0500},
	Journal = {Biometrika},
	Number = {3},
	Pages = {631-645},
	Publisher = {Oxford University Press},
	Title = {Detecting simultaneous changepoints in multiple sequences},
	Volume = {97},
	Year = {2010}}

@article{harle2016,
	Author = {Harl{\'e}, Flore and Chatelain, Florent and Gouy-Pailler, C{\'e}dric and Achard, Sophie},
	Date-Modified = {2020-08-25 00:32:56 -0500},
	Journal = {IEEE Transactions Signal Processing},
	Number = {16},
	Pages = {4351-4362},
	Title = {Bayesian Model for Multiple Change-Points Detection in Multivariate Time Series},
	Volume = {64},
	Year = {2016}}

@article{wangsamworth2017,
	Author = {Wang, Tengyao and Samworth, Richard J},
	Date-Modified = {2020-08-25 01:09:36 -0500},
	Journal = {Jounral of Royal Statistical Society: Series B},
	Number = {1},
	Pages = {57-83},
	Title = {High dimensional change point estimation via sparse projection},
	Volume = {80},
	Year = {2018}}

@article{rand1971objective,
	Author = {Rand, William M},
	Journal = {Journal of the American Statistical association},
	Number = {336},
	Pages = {846--850},
	Publisher = {Taylor \& Francis Group},
	Title = {Objective criteria for the evaluation of clustering methods},
	Volume = {66},
	Year = {1971}}

@article{hubert1985comparing,
	Author = {Hubert, Lawrence and Arabie, Phipps},
	Date-Modified = {2020-08-25 00:34:42 -0500},
	Journal = {Journal of Classification},
	Pages = {193-218},
	Publisher = {Springer},
	Title = {Comparing partitions},
	Volume = {2},
	Year = {1985}}

@article{cho2016change,
	Author = {Cho, Haeran},
	Date-Modified = {2020-08-25 00:08:54 -0500},
	Journal = {Electronic Journal of Statistics},
	Number = {2},
	Pages = {2000-2038},
	Publisher = {The Institute of Mathematical Statistics and the Bernoulli Society},
	Title = {Change-point detection in panel data via double {CUSUM} statistic},
	Volume = {10},
	Year = {2016}}

@article{james2013ecp,
	Author = {James, Nicholas A and Matteson, David S},
	Date-Modified = {2020-08-25 00:36:03 -0500},
	Journal = {Journal of Statistical Software},
	Number = {7},
	Pages = {1-25},
	Title = {ecp: An {R} package for nonparametric multiple change point analysis of multivariate data},
	Volume = {62},
	Year = {2015}}
	\newpage

	\appendix
	\begin{center}
		{\Large Supplementary Material} \\ \vspace{0.2in}
		
		This supplementary material contains the discussions, proofs, and additional simulation and real data results of the main paper.
	\end{center}

	\section{Further discussions}
	\label{smsec:discussions}
	
	In this section, we provide more detailed comparisons with other change point detection and estimation methods in literature.
	
	\begin{rmk}[Comparison with \cite{jirak2015} under $H_0$]
	\label{rmk:compare_jirak2015}
		In a related work, \cite{jirak2015} considers the change point tests for high-dimensional time series based on the following version of the CUSUM statistics
		\[
		B_{nj} = { 1 \over \hat\sigma_j \sqrt{n} } \max_{1 \le s \le n} \left| \sum_{i=1}^s X_{ij} - {s \over n} \sum_{i=1}^n X_{ij} \right|, \quad j = 1,\dots,p,
		\]
		where $\hat\sigma_j^2$ is a consistent estimator for the long-run variance of $\{X_{ij}\}_{i \in \mathbb{N}}$. Then $H_0$ is rejected if $\tilde{T}_n = \max_{1 \le j \le p} B_{nj}$ is larger than a critical value. Under $H_0$ and the spatial sparsity conditions (Assumption 2.2 in \cite{jirak2015}), the author establishes a Gumbel limiting distribution for $\tilde{T}_n$ (after suitable normalizations). To improve the rate of convergence, the author also proposes a parametric bootstrap $\tilde{T}_n^Y = \max_{1 \le j \le p} B_{nj}^Y$, where 
		\[
		B_{nj}^Y = {1 \over \sqrt{n}} \max_{1 \le s \le n} \left| \sum_{i=1}^s Y_{ij} - {s \over n} \sum_{i=1}^n Y_{ij} \right|, \quad j = 1,\dots,p,
		\]
		and $\{Y_{ij} : 1 \le i \le n, 1 \le j \le p\}$ is an array of i.i.d.\ $N(0,1)$ random variables. Asymptotic bootstrap validity is derived under the same spatial sparsity assumption as in the Gumbel limit. There is an important difference between $\tilde{T}_n^Y$ in \cite{jirak2015} and our bootstrap test based on $T_n^*$. Note that the conditional covariance matrices of $Z_n^*(s)$ given $X_1^n$ are sample analogs of covariance matrices of $Z_n(s)$. 
		On the contrary, since $\{Y_{ij}\}$ are i.i.d., even when $X_1,\dots,X_n$ are independent observations, the parametric bootstrap $B_{nj}^Y$ does not mimic the general dependence structure among the components $\{X_{ij}\}_{j=1}^p$.
	\qed
	\end{rmk}
	
	\begin{rmk}[Comparison with \cite{jirak2015} under $H_1$]
		\label{rmk:compare_jirak2015_H1}
		\cite{jirak2015} proposed bootstrap testing procedures for change point under the alternative, which are different from the parametric bootstrap under $H_0$ (cf.\ Remark \ref{rmk:compare_jirak2015}). Specifically, the author considers several block versions of the multiplier and empirical bootstraps under $H_1$ in the time series setting. All of the tests under $H_1$ in \cite{jirak2015} require a minimum signal strength condition (cf. Assumption 4.3  therein): 
		\begin{equation}
		\label{eqn:jirak_min_signal_condition}
		\limsup_{n \to \infty} {\log{n} \over K_b \min_{j \in \calS} \delta_{nj}^2} = 0,
		\end{equation}
		where $\calS = \{j \in \{1,\dots,p\} : \delta_{nj} \neq 0\}$ and $K_b$ is the size of blocks. There are two major differences from the bootstraps in \cite{jirak2015}. First, our Gaussian multiplier bootstrap CUSUM test is asymptotically valid and powerful for a change point under both $H_0$ and $H_1$. 
		On the contrary, Section 4 in \cite{jirak2015} designed a series estimators of $t_m$ to adapt to estimation of $\hat\sigma_j^2, j=1, \ldots, p$ under $H_1$, and the estimators must rely on the assumption that each dimension has at most one change point. 
		However, our approach without estimation of $t_m$ can test against more generalized multiple change-point problem where power can still be guaranteed (cf. Lemma~\ref{lem:power_mcp}).
		Second, detection by our bootstrap CUSUM test relies on a lower bound on the signal strength quantified by $|\delta_{n}|_\infty$, which is much weaker than (\ref{eqn:jirak_min_signal_condition}). For example, it is possible that the minimum signal strength $\min_{j \in \calS} \delta_{nj}^2$ decays to zero faster than $(\log{n})/K_b$, while our bootstrap CUSUM test remains valid since it only requires $|\delta_n|_{\infty}$ satisfies a mild lower bound in (\ref{eqn:power_signal_lower_bound}).
		\qed
	\end{rmk}

	\begin{rmk}[Comparison with the sparse projection CUSUM method in~\cite{wangsamworth2017}]
		\label{rmk:compare_sparse_projection}
		\cite{wangsamworth2017} considered a different sparse projection estimator, denoted as $\tilde{m}$, for change point location (even though their estimator is based on the $\theta = 1/2$ normalization). 
		Then, Theorem 1 in \cite{wangsamworth2017} for single change point in our notation reads: if $X_{i} \sim N(\mu_{i}, \sigma^{2} I_{p})$ are independent and the change point signal $\delta_{n}$ satisfies $\|\delta_{n}\|_{0} \leq k$ and $\|\delta_{n}\|_{2} \geq \vartheta$ such that
		\begin{equation}
		\label{eqn:wang-samworth_condition}
		{\sigma \over \vartheta \tau} \sqrt{k \log(p \log{n}) \over n} \lesssim 1,
		\end{equation}
		where $\tau = \min\{t_m, 1-t_m\}$, then with probability tending to one, $\tilde{m}$ obeys
		\begin{equation}
		\label{eqn:wang-samworth_rate}
		n^{-1} |\tilde{m} - m| \lesssim {\sigma^{2} \log\log{n} \over n \vartheta^{2}}.
		\end{equation}
		Here, $\vartheta / \sigma$ can be thought as a signal-to-noise ratio.
		
		Let us compare $\tilde{m}$ in~\cite{wangsamworth2017} with our estimators $\hat{m}_{1/2}$ and $\hat{m}_{0}$ in the highly sparse regime where $k=1$, $\vartheta = n^{-c_{1}}$, and $m = n^{c_{2}}$ for some $c_{1}, c_{2} \geq 0$. For simplicity, let $\sigma = 1$. For such configuration, $t_{\tilde{m}} = \tilde{m} / n$ attains the nearly minimax-optimal rate of convergence $n^{-1+2c_{1}} \log\log{n}$ if \eqref{eqn:wang-samworth_condition} holds, i.e., we need
		\[
		\sqrt{\log(p\log{n})} \lesssim n^{c_{2}-c_{1}-1/2},
		\]
		which necessarily requires that $c_{2} > c_{1} + 1/2$ as $n \to \infty$. It means that the change point location cannot be too close to the boundary $m \gg n^{c_{1}+1/2}$ in order to obtain the optimal rate for \cite{wangsamworth2017}. Thus, if the change point is not close to the boundary, then the sparse projection estimator is nearly optimal at the rate $O(n^{-1+2c_{1}})$ (where $c_{1}$ can be arbitrarily small to zero), and for the boundary scenario, their estimator loses such optimality.
		
		In our Theorem~\ref{thm:rate_location_estimator_nonstationary}, it is shown that $t_{\hat{m}_{0}}$ achieves the nearly optimal rate (up to logarithmic factors) if $m = C n$ for some constant $C \in (0,1)$. On the other hand, we showed in our Theorem 3.4 that our estimator $t_{\hat{m}_{1/2}}$ can deal with the {\it ``more boundary"} case as long as
		\[
		\log^{2}(np) \ll n^{c_{2}/2 - c_{1}}.
		\]
		In addition, there are other side differences between our assumptions and the ones in~\cite{wangsamworth2017}, where the latter are more stringent on the data generation mechanism.
		\qed
	\end{rmk}

	\begin{rmk}[Comparison with other binary segmentation type methods]
	\label{rmk:compare_BS}
	In \cite{chofryzlewicz2015} and \cite{cho2016change}, binary segmentation was also used to extend their single change-point algorithms to the multiple change-point alternative. Theorem 1 in
	\cite{chofryzlewicz2015} discussed the asymptotic consistency of $\Prob(\calS_n) \rightarrow 1$ for SBS algorithm under two cases. With proper tuning on their threshold parameter $\pi_n$, they claimed $\epsilon_n = O(\log^{2+c_1} (n))$ for any $c_1>0$ when $\Theta=1$ for $D_\nu = O(n^\Theta)$ and $\epsilon_n = O(n^{2-2\Theta})$ when $\Theta\in (3/4,1)$, while the rate of BABS for both cases is $\epsilon_n = O(n^{2-2\Theta} \log^4 (np))$, which is nearly the same as \cite{chofryzlewicz2015} up to a logarithmic factor.
	In \cite{cho2016change}, the author proposed a binary segmentation via double CUSUM algorithm (DCBS).
	The DCBS statistic has two CUSUM transforms in both time points with $\theta = 1/2$ in \eqref{eqn:generalized_cusum} and	ordered spatial variables (i.e., cross-sectional features) in a different weighting $\varphi$. Consider the special case where $\varphi = 1/2$ and $\delta_{n,1}^{(l)} = \dots = \delta_{n,k}^{(l)} = \udelta_n \neq 0, l = 1,\cdots,\nu$. Then Theorem 3.3 in \cite{cho2016change} shows that $\Prob(\calS_n) \rightarrow 1$ for DCBS where $\epsilon_n = \udelta_n^{-2} p k^{-2} n^{5-4\Theta} \log^2(n)$ when $\Theta\in (6/7,1]$, while our BABS needs $\epsilon_n = \udelta_n^{-2} n^{2-2\Theta} \log^4(np)$. Under sparse alternative where $k < p^{1/2} n^{3/2-\Theta} \log^{-2} (np)$, BABS always has a better rate. However, we allow $p$ to be as sub-exponentially large in $n$, while Assumption 4 in \cite{chofryzlewicz2015} required $pn^{-\log n} \rightarrow 0$ and Condition (A2) in \cite{cho2016change} fixed $p$ as a polynomial order of $n$. 
	\qed
	\end{rmk}
	
	\begin{rmk}[Extension to change point estimation in covariance matrices]
	Our BABS algorithm can be tailored to other high-dimensional change point estimation problems beyond the mean vectors. For instance, one can consider the estimation problem of multiple change points in covariance matrices for centered and {\it component-wise} sub-Gaussian data $X_{i} \in \R^{p}$ such that $\|X_{ij}\|_{\psi_2} \le B$. Since $\|X_{ij}\|_{\psi_2} = \|X^2_{ij}\|_{\psi_1}$, the triangle inequality of the $\psi_{1}$-norm implies that
	 \[
	 \|X_{ij} X_{ik}\|_{\psi_1} \le \|X^2_{ij}/2\|_{\psi_1} + \|X^2_{ik}/2\|_{\psi_1} = \|X_{ij}\|_{\psi_2}/2 + \|X_{ik}\|_{\psi_2}/2.
	 \]
	Thus, if $X_{i}$ has bounded sub-Gaussian entries, then the vectorized version of the empirical covariance matrix $\xi_i = \mbox{vec}(X_i X_i^T) \in \R^{p^2}$ has all entries satisfying the bounded sub-exponential assumption (C) in Section~\ref{subsec:validity_bootstrap_cp_test}. So Theorem~\ref{thm:binseg_consistency_mcp} (part (i)) implies that with a constant signal strength, $\Prob(\calS_n) \ge 1- O( \nu \varpi_{1,n} )$ for $\epsilon_n = O(D_\nu^{-2} n^{2} \log^4(np))$.

	\cite{WangYuRinaldo2017} studied a similar covariance matrix change points estimation procedure for centered and sub-Gaussian data points $X_i \in \R^p$ such that $\max_{1 \le i \le n} \|X_{i}\|_{\psi_2} \le B$, where $\|X\|_{\psi_2} = \sup_{v \in \mathbb{S}^{p-1}} \|v^{T} X\|_{\psi_{2}}$ and $\mathbb{S}^{p-1}$ is the unit sphere in $\R^{p}$. \cite{WangYuRinaldo2017} proposed a binary segmentation in operator norm (BSOP) by recursively maximizing the operator norm of the matrix-valued CUSUM statistics (without the bootstrap calibration). 
	Theorem 1 in \cite{WangYuRinaldo2017} stated that $\Prob(\calS_n) \ge 1- O(9^p n^{3-cp})$ for $\epsilon_n = O(D_\nu^{-2} n^{5/2} \sqrt{p \log(n)})$ and $p =O(n/\log(n))$. 
	Our BABS improves the BSOP in the following aspects. First, our sub-exponential tail condition is weaker than \cite{WangYuRinaldo2017} in view of $\max_{1 \le j \le p} \|X_{ij}\|_{\psi_{2}} \leq \|X_{i}\|_{\psi_2}$. 
	Second, we allow the dimension $p$ to be sub-exponentially large in the sample size $n$. This is an essential benefit by working with the $\ell^{\infty}$-norm (rather than the operator norm).
	\qed
	\end{rmk}

	\section{Proof of main results in Section~\ref{sec:main_results}}
	\label{sec:proofs}
	In this section, we prove the main theoretical results in Section~\ref{sec:main_results}.
	We first present a useful maximal inequality for weighted partial sums of independent and centered random vectors.
	
	\begin{lem}[Talagrand's inequality for weighted partial sums of independent and centered random vectors]
		\label{lem:maximal_subexponential}
		Let $X_1,\dots,X_n$ be independent and centered random vectors in $\R^p$ and $\{a_{is}\}_{i,s=1}^n$ be an $n \times n$ matrix of real numbers. Define 
		\[
		\begin{gathered}
		W_{n,sj} = \sum_{i=1}^n a_{is} X_{ij}, \quad Z_n = \max_{1 \le s \le n} \max_{1 \le j \le p} |W_{n,sj}|, \\
		M = \max_{1 \le s,i \le n} \max_{1 \le j \le p} |a_{is} X_{ij}|,  \quad \sigma^2 = \max_{1 \le s \le n} \max_{1 \le j \le p} \sum_{i=1}^n a_{is}^2 \E(X_{ij}^2).
		\end{gathered} 
		\]
		\\
		(i) Let $\beta \in (0,1]$ and suppose that $\|X_{ij}\|_{\psi_\beta} < \infty$ for all $i = 1,\dots,n$ and $j=1,\dots,p$. Then, $\forall \eta \in (0,1]$, there exists a constant $C > 0$ depending only on $\beta$ and $\eta$ such that we have for $t > 0$
		\begin{equation}
		\label{eqn:maximal_subexponential}
		\Prob(Z_n \ge (1+\eta) \E[Z_n] + t) \le \exp\left(-{t^2 \over 3 \sigma^2} \right) + 3 \exp\left\{ - \left( {t \over C \|M\|_{\psi_\beta}} \right)^\beta \right\}.
		\end{equation}
		(ii) Let $s \ge 1$ and suppose that $\E|X_{ij}|^s < \infty$ for all $i = 1,\dots,n$ and $j=1,\dots,p$. Then, $\forall \eta \in (0,1]$, there exists a constant $C > 0$ depending only on $s$ and $\eta$ such that we have for $t > 0$
		\begin{equation}
		\label{eqn:maximal_polynomial}
		\Prob(Z_n \ge (1+\eta) \E[Z_n] + t) \le \exp\left(-{t^2 \over 3 \sigma^2} \right) + C {\E[M^s] \over t^s}.
		\end{equation}
	\end{lem}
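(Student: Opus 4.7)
The plan is to reduce the statement to a standard supremum-of-sums form and invoke existing Talagrand-type concentration inequalities for unbounded suprema. Index $k=(s,j) \in \{1,\dots,n\} \times \{1,\dots,p\}$ and construct independent centered random vectors $Y_i \in \R^{np}$ by $Y_{i,k} = a_{is} X_{ij}$. Then
\[
Z_n = \Big|\sum_{i=1}^n Y_i\Big|_\infty, \qquad M = \max_{1 \le i \le n} |Y_i|_\infty, \qquad \sigma^2 = \max_{1 \le k \le np} \sum_{i=1}^n \E[Y_{i,k}^2],
\]
so that $\sigma^2$ and $M$ are precisely the weak variance and envelope of this finite-dimensional supremum process. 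Both quantities carry the original weights $a_{is}$ inside $Y_{i,k}$, so no additional bookkeeping is needed to track them separately.

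For part (i), the asserted bound is (up to notation) the Adamczak--Talagrand inequality for suprema of sums of independent centered random vectors under a $\psi_\beta$ moment assumption on the envelope, $\beta \in (0,1]$; see \cite{adamczak2008}. The first term reflects the Bousquet--Bernstein sub-Gaussian concentration around the mean on the scale $\sigma^2$, while the second captures the heavy-tail contribution through $\|M\|_{\psi_\beta}$. I would apply Bousquet's form of the bounded Talagrand inequality to a truncated version of each $Y_i$ at level $\tau \asymp \|M\|_{\psi_\beta} \log^{1/\beta}(e/\gamma)$, control $\Prob(M > \tau)$ by the $\psi_\beta$ moment via Chebyshev, and absorb all $\eta$-dependent multiplicative factors into the constant $C$ so that the coefficient $3$ in front of $\sigma^2$ is preserved.

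Part (ii) follows the same reduction; only the tail bound on the envelope changes. Truncate each $Y_i$ coordinatewise at level $\tau$, apply Bousquet's bounded inequality to the truncated centered sum, and estimate the tail contribution from $\{M > \tau\}$ directly by Markov: $\Prob(M > \tau) \le \E[M^s]/\tau^s$. Choosing $\tau \asymp t$ yields the polynomial second term $C \E[M^s]/t^s$. The centering correction from truncation, $\sum_i \E[|Y_i|_\infty \vone(|Y_i|_\infty > \tau)]$, is bounded above by $C\, \E[M^s]/\tau^{s-1}$ and hence absorbed into the $(1+\eta) \E[Z_n]$ term for $s \ge 1$ and $\tau$ chosen suitably.

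The main obstacle is the truncation bookkeeping: one must verify that the expectation of the truncated process is not inflated beyond $(1+\eta) \E[Z_n]$ and that all $\eta$-dependent factors arising from the bounded Talagrand--Bousquet step can be absorbed to preserve the clean constant $3$ appearing in the sub-Gaussian exponent. These steps are standard and available in \cite{adamczak2008,ledouxtalagrand1991}, but require care so that the final constants depend only on $(\beta,\eta)$ in part (i) and on $(s,\eta)$ in part (ii), as stated.
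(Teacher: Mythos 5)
Your reduction is exactly the one the paper uses: stack the weighted summands into independent centered vectors indexed by $(s,j)$ so that $Z_n$, $M$, and $\sigma^2$ become the supremum, envelope, and weak variance of a sum of independent random vectors in $\R^{np}$, and then both bounds follow directly from the known Talagrand/Fuk--Nagaev-type inequalities (the paper cites Lemma E.3 of \cite{cck2016a}, which is the same result you attribute to \cite{adamczak2008}). Your additional sketch of how those inequalities are themselves derived via truncation and Bousquet's inequality is not needed but is consistent with the standard proofs, so the proposal is correct and essentially identical to the paper's argument.
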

	
	\subsection{Proof of Theorem \ref{thm:bootstrap_approx_cusum_statistic}}
	
	\begin{proof}[of Theorem \ref{thm:bootstrap_approx_cusum_statistic}]
		Suppose that $H_0$ is true. We may assume $\log^7 (np) \le \us$ for otherwise (\ref{eqn:bootstrap_cusum_subexp}) and (\ref{eqn:bootstrap_cusum_poly}) trivially hold by choosing the constant $C > 0$ large enough therein. For $s = 1,\dots,n-1$, let
		\begin{equation}
		\label{eqn:a_is_cusum_statistic}
		a_{is} = \left\{
		\begin{array}{cc}
		\sqrt{n-s \over n s} & \text{if } 1 \le i \le s \\
		-\sqrt{s \over n (n-s)} & \text{if } s+1 \le i \le n \\
		\end{array} \right. 
		\end{equation}
		and $\va_s = (a_{1s}, \dots, a_{ns})^\top$. Denote $\vX = (X_1, \dots, X_n)$ as the $p \times n$ data matrix and $A = (\va_{\us},\dots,\va_{n-\us})$. Then we can write
		\begin{equation}
		\label{eqn:stacked_Zn}
		Z_n(s) = \sum_{i=1}^n a_{is} X_i = \vX \va_s.
		\end{equation}
		Since $\E[Z_n(s)] = 0$ under $H_0$, without loss of generality, we may assume $\mu_i \equiv 0$. Note that, for any $1 \le s \le s' \le n-1$, we have 
		$$
		\Cov(Z_n(s), Z_n(s')) = \sum_{i=1}^n a_{is} a_{is'} \Sigma = \Sigma \sqrt{s (n-s') \over s' (n-s)}.
		$$
		
		\underline{Step 1: Gaussian approximation for CUSUM statistic.} Let
		$$
		\vZ_n = (Z_n(\us), \dots, Z_n(n-\us)) = \vX (\va_{\us},\dots,\va_{n-\us}) = \vX A
		$$
		be the CUSUM transformation of $\vX$. Let $\vec(\vZ_n)$ be the column stacked version of $\vZ_n$, i.e. $\vec(\vZ_n) = (Z_n(\us)^\top, \dots, Z_n(n-\us)^\top)^\top$ is the $[(n-2\us+1)p] \times 1$ vector associated with $\vZ_n$. Then we can write
		$$
		\vec(\vZ_n) = \vec(\vX A) = (A^\top \otimes I_p) \vec(\vX),
		$$
		where $\otimes$ is the Kronecker product of two matrices. Since $\E[\vec(\vX)] = 0$ and $\Cov(\vec(\vX)) = \Gamma$, where $\Gamma$ is the block diagonal matrix of size $(pn) \times (pn)$ with $\Sigma$ being the diagonal sub-matrices, we have $\E[\vec(\vZ_n)] = 0$ and $\Cov(\vec(\vZ_n)) = (A^\top \otimes I_p) \Gamma (A \otimes I_p).$ Let
		\begin{equation}
		\label{eqn:gaussian_analog_cusum_statistic}
		Y_n \sim N(0, (A^\top \otimes I_p) \Gamma (A \otimes I_p)).
		\end{equation}
		be a joint mean-zero Gaussian random vector in $\R^{(n-2\us+1)p}$ with the same covariance matrix as $\vec(\vZ_n)$. Denote $\bar{Y}_n = |Y_n|_\infty$. Since $\Cov(Z_n(s)) = \Sigma$ for all $s = 1,\dots,n-1$, it follows from (A) that $\Var(Z_{nj}(s)) \ge \ub$ for all $1 \le j \le p$ and $\us \le s \le n-\us$. Since $1 \le \us \le n/2$, we have 
		\begin{eqnarray*}
			\sum_{i=1}^n |a_{is}|^3 &=& \sqrt{n \over s(n-s)} - {2 \over n} \sqrt{s(n-s) \over n} \le \sqrt{n \over \us (n-\us)} \le \sqrt{2 \over \us}, \\
			\sum_{i=1}^n |a_{is}|^4 &=& {n \over s(n-s)} - {3 \over n} \le {n \over \us (n-\us)} \le {2 \over \us}.
		\end{eqnarray*}
		Set $B_n = (2 \bar{b}^2 \us^{-1} n)^{1/2}$. By assumption (B), we have for $\ell = 1,2$,
		$$
		n^{-1} \sum_{i=1}^n |n^{1/2} a_{is}|^{2+\ell} \E|X_{ij}|^{2+\ell} \le B_n^\ell.
		$$
		Note that $\us^{1/2}  |a_{is}| \le 1$ for all $s = \us, \dots, n-\us$. 
		
		\underline{\it{Part (i)}}. If (C) holds, then we have 
		$$
		\E\left[ \exp\left( n^{1/2} |a_{is}| |X_{ij}| / B_n \right) \right] \le \E\left[ \exp \left( \us^{1/2}  |a_{is}| |X_{ij}| / \bar{b} \right) \right] \le 2.
		$$
		By Proposition 2.1 in \cite{cck2016a}, there exists a constant $C_1 > 0$ depending only on $\ub$ and $\bar{b}$ such that 
		\begin{equation}
		\label{eqn:GA_subexp}
		\rho(T_n, \bar{Y}_n) \le C(\ub) \left( {B_n^2 \log^7(pn) \over n} \right)^{1/6} \le C_1 \varpi_{1,n}.
		\end{equation}
		
		\underline{\it{Part (ii)}}. If (D) holds, then we have 
		$$
		\E\left\{ \max_{1 \le j \le p} \max_{\us \le s \le n-\us} (|n^{1/2} a_{is}| |X_{ij}| / B_n)^q \right\} \le \left[ \us^{1/2} (\max_{\us \le s \le n-\us} |a_{is}|) \right]^q \E\left[ \max_{1 \le j \le p} (|X_{ij}| / \bar{b})^q \right] \le 1
		$$
		for all $i = 1,\dots,n$. By Proposition 2.1 \cite{cck2016a}, there exists a constant $C_1 > 0$ depending only on $\ub,\bar{b},q$ such that 
		\begin{eqnarray}
		\nonumber
		\rho(T_n, \bar{Y}_n) &\le& C(\ub,q) \left\{ \left( {B_n^2 \log^7(pn) \over n} \right)^{1/6} + \left( {B_n^2 \log^3(pn) \over n^{1-2/q}} \right)^{1/3} \right\} \\
		\label{eqn:GA_poly}
		&\le& C_1 \{ \varpi_{1,n} + \varpi_{2,n} \}.
		\end{eqnarray}
		
		\underline{Step 2: Gaussian comparison for $\bar{Y}_n$ and bootstrap CUSUM statistic $T_n^*$.} Let
		\begin{equation}
		\label{eqn:cusum_sample_cov_mat}
		\begin{gathered}
		\hat{S}_{n,s}^- = {1 \over s} \sum_{i=1}^s (X_i - \bar{X}_s^-) (X_i - \bar{X}_s^-)^\top, \\
		\hat{S}_{n,s}^+ = {1 \over n-s} \sum_{i=s+1}^n (X_i - \bar{X}_s^+) (X_i - \bar{X}_s^+)^\top,
		\end{gathered}
		\end{equation}
		be the sample covariance matrices based on the left and right observations at $s$, respectively. Then
		$$
		Z_n^*(s) | X_1^n \sim N \left(0, {n-s \over n} \hat{S}_{n,s}^- + {s \over n} \hat{S}_{n,s}^+ \right).
		$$
		Let
		$$
		\va^*_{is} = \left\{
		\begin{array}{cc}
		\sqrt{n-s \over n s} (X_i - \bar{X}_s^-) & \text{if } 1 \le i \le s \\
		-\sqrt{s \over n (n-s)} (X_i - \bar{X}_s^+) & \text{if } s+1 \le i \le n \\
		\end{array} \right. 
		$$
		and $A_s^* = (\va_{1s}^*, \dots, \va_{ns}^*)$. Let $\ve = (e_1,\dots,e_n)^\top$. Then we can write
		$$
		Z_n^*(s) = \sum_{i=1}^n \va_{is}^* e_i = A_s^* \ve.
		$$
		Let
		$$
		Z_n^* = 
		\left( \begin{array}{c}
		Z_n^*(\us) \\
		\vdots \\
		Z_n^*(n-\us) \\
		\end{array} \right) 
		= 
		\left( \begin{array}{c}
		A_{\us}^* \ve \\
		\vdots \\
		A_{n-\us}^* \ve \\
		\end{array} \right) 
		=
		\left( \begin{array}{c}
		A_{\us}^* \\
		\vdots \\
		A_{n-\us}^* \\
		\end{array} \right) \ve
		:=
		{\mbf A}^* \ve.
		$$
		Since $\ve \sim N(0, \Id_n)$, it follows that $Z_n^* | X_1^n \sim N(0, {\mbf A}^* {{\mbf A}^*}^\top).$ Next, we compute an explicit expression for the covariance matrix of $Z_n^*$ given $X_1^n$. Some routine algebra show that for any $\us \le s \le s' \le n-\us$, 
		\begin{eqnarray*}
			&& \Cov(Z_n^*(s), Z_n^*(s') | X_1^n) = \Sigma \sqrt{s (n-s') \over s' (n-s)} + {R_1 \over n} \sqrt{(n-s) (n-s') \over s s'} + {R_2 \over n} \sqrt{s (n-s') \over s' (n-s)}\\
			&& \qquad  + {R_3 \over n} \sqrt{s (n-s') \over s' (n-s)} + {R_4 \over n} \sqrt{s s'  \over (n-s) (n-s')} - {R_5 \over n} \sqrt{s (n-s') \over s' (n-s)},
		\end{eqnarray*}
		where $R_1 = \sum_{i=1}^s [(X_i - \bar{X}_s^-) (X_i - \bar{X}_{s'}^-)^\top - \Sigma]$, $R_2 = \sum_{i=1}^s [(X_i - \bar{X}_s^+) (X_i - \bar{X}_{s'}^-)^\top - \Sigma]$, $R_3 = \sum_{i=s'+1}^n  [(X_i - \bar{X}_s^+) (X_i - \bar{X}_{s'}^-)^\top - \Sigma]$, $R_4 = \sum_{i=s'+1}^n  [(X_i - \bar{X}_s^+) (X_i - \bar{X}_{s'}^+)^\top - \Sigma]$, and $R_5 = \sum_{i=1}^n  [(X_i - \bar{X}_s^+) (X_i - \bar{X}_{s'}^-)^\top - \Sigma]$. Let 
		\begin{equation}
		\label{eqn:Delta_hat_defns}
		\begin{gathered}
		\hat\Delta_1 = \max_{\us \le s \le n-\us} \left| {1 \over s} \sum_{i=1}^s (X_i X_i^\top - \Sigma) \right|_\infty, \quad \hat\Delta_3 = \max_{\us \le s \le n-\us} |\bar{X}_s^-|_\infty, \\
		\hat\Delta_2 = \max_{\us \le s \le n-\us} \left| {1 \over n-s} \sum_{i=s+1}^n (X_i X_i^\top - \Sigma) \right|_\infty, \quad \hat\Delta_4 =  \max_{\us \le s \le n-\us} |\bar{X}_s^+|_\infty.
		\end{gathered}
		\end{equation} 
		Then there exists a universal constant $K_1 > 0$ such that 
		\begin{equation*}
		|\Cov(Z_n^* | X_1^n) - \Cov(Y_n)|_\infty \le K_1 \hat\Delta,
		\end{equation*}
		where 
		\begin{equation}
		\label{eqn:Delta_hat}
		\hat\Delta = \max\{ \hat\Delta_1, \hat\Delta_2 \} + \max\{\hat\Delta_3^2, \hat\Delta_4^2\}.
		\end{equation}
		Let $\bar\Delta$ be a positive real number and $E = \{ \hat\Delta \le \bar\Delta \}$. By Lemma C.1 in \cite{chen2017+}, there exists a constant $C_2 > 0$ depending only on $\ub$ such that on the event $E$, we have 
		\begin{equation*}
		\rho^*(\bar{Y}_n, \bar{Z}^*_n) \le C_2 \bar\Delta^{1/3} \log^{2/3}(np).
		\end{equation*}
		
		\underline{\it{Part (i)}}. If (C) holds, then we choose 
		\begin{equation}
		\label{eqn:Delta_bound_subexp}
		\bar\Delta = C_3  \us^{-1/2} \log^{3/2}(np)
		\end{equation}
		for some large enough constant $C_3 := C_3(\ub, \bar{b}, K) > 0$. By Lemma \ref{lem:hatDelta}, we have $\Prob(E) \ge 1-\gamma$. Then there exists a constant $C_4 := C_4(\ub, \bar{b}, K) > 0$ such that 
		\begin{equation}
		\label{eqn:GC_subexp}
		\rho^*(\bar{Y}_n, \bar{Z}^*_n) \le C_4 \varpi_{1,n}
		\end{equation}
		holds with probability at least $1-\gamma$. Combining (\ref{eqn:GA_subexp}) and (\ref{eqn:GC_subexp}), we obtain (\ref{eqn:bootstrap_cusum_subexp}).
		
		\underline{\it{Part (ii)}}. If (D) holds, then we choose 
		\begin{equation}
		\label{eqn:Delta_bound_poly}
		\bar\Delta = C_5 \{ \us^{-1/2} \log^{1/2}(np) + \gamma^{-2/q} \us^{-1} n^{2/q} \log(np) \}
		\end{equation}
		for some large enough constant $C_5 := C_5(\ub, \bar{b}, K, q) > 0$. By Lemma \ref{lem:hatDelta}, we have $\Prob(E) \ge 1-\gamma$. Then there exists a constant $C_6 := C_6(\ub, \bar{b}, K, q) > 0$ such that 
		\begin{equation}
		\label{eqn:GC_poly}
		\rho^*(\bar{Y}_n, \bar{Z}^*_n) \le C_6 \{ \varpi_{1,n} + \varpi_{2,n} \}.
		\end{equation}
		holds with probability at least $1-\gamma$. Combining (\ref{eqn:GA_poly}) and (\ref{eqn:GC_poly}), we obtain (\ref{eqn:bootstrap_cusum_poly}).
	\end{proof}

	\begin{lem}[Bound on $\max_{1 \le i \le 4} \hat\Delta_i$]
		\label{lem:hatDelta}
		Suppose $H_0$ is true and assume (A) and (B) hold. Let $\gamma \in (0, e^{-1})$ and suppose that $\log(\gamma^{-1}) \le K \log(pn)$ for some constant $K > 0$. Let $\hat\Delta_i,i=1,\dots,4$ be defined in (\ref{eqn:Delta_hat_defns}). \\
		(i) If (C) holds and $\log^5(np) \le \us$, then there exists a constant $C > 0$ depending only on $\bar{b}, K$ such that 
		$$
		\Prob(\max_{1 \le i \le 4} \hat\Delta_i \le C  \us^{-1/2} \log^{3/2}(np)) \ge 1 -\gamma.
		$$
		(ii) If (D) holds with $q \ge 4$, then there exists a constant $C > 0$ depending only on $\bar{b}, K, q$ such that 
		$$
		\Prob(\max_{1 \le i \le 4} \hat\Delta_i \le C  \{ \us^{-1/2} \log^{3/2}(np) + \gamma^{-2/q} \us^{-1} n^{2/q} \log(np) \} ) \ge 1 -\gamma.
		$$
	\end{lem}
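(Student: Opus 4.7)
The plan is to bound each of $\hat\Delta_1,\hat\Delta_2,\hat\Delta_3,\hat\Delta_4$ separately by applying the Talagrand-type maximal inequality for weighted partial sums (stated as the first lemma in the Appendix) to an appropriate choice of summands and weights, and then absorbing all four tails into a single $\gamma$-budget via a union bound. In each case the weight matrix is of the form $a_{is}=s^{-1}\vone(i\le s)$ or $a_{is}=(n-s)^{-1}\vone(i>s)$, which on the range $\us\le s\le n-\us$ satisfies $|a_{is}|\le 1/\us$ and $\sum_i a_{is}^2\le 1/\us$. These two elementary bounds drive the parameters $\sigma^2$ and $M$ of the maximal inequality throughout.

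The sample-mean terms $\hat\Delta_3,\hat\Delta_4$ are the easier case. Taking $Y_{ij}=X_{ij}$ (centered under $H_0$), we obtain $\sigma^2\le \bar b^2/\us$ directly from (B). Under (C), the standard maximal inequality for $\psi_1$ variables gives $\|M\|_{\psi_1}\lesssim \bar b\log(np)/\us$ and $\E[Z_n]\lesssim \bar b\sqrt{\log(np)/\us}$. Plugging $t=C\sigma\sqrt{\log\gamma^{-1}}$ into the Gaussian part and $t\gtrsim \|M\|_{\psi_1}\log\gamma^{-1}$ into the $\psi_1$-tail, and using $\log\gamma^{-1}\le K\log(np)$, produces $\hat\Delta_3\vee\hat\Delta_4 \lesssim \sqrt{\log(np)/\us} + \log^2(np)/\us$, which under $\log^5(np)\le\us$ is absorbed into $\us^{-1/2}\log^{3/2}(np)$. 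Case (ii) is analogous using the polynomial version of the maximal inequality together with the bound $\E[M^q]\le n\bar b^q/\us^q$ that follows from (D); this contributes a heavy-tail term of order $\gamma^{-1/q}n^{1/q}/\us$, which is dominated by the corresponding term from $\hat\Delta_1,\hat\Delta_2$.

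For the more delicate terms $\hat\Delta_1,\hat\Delta_2$ the summands become $Y_{i,(j,k)}:=X_{ij}X_{ik}-\Sigma_{jk}$, indexed by pairs $(j,k)\in\{1,\ldots,p\}^2$; since $\log(p^2)\asymp\log(p)$ the effective dimension parameter is unchanged. Under (C) the product identity $\psi_1\cdot\psi_1\subset\psi_{1/2}$ yields $\|Y_{i,(j,k)}\|_{\psi_{1/2}}\lesssim\bar b^2$, hence $\sigma^2\lesssim\bar b^4/\us$ after Cauchy--Schwarz, while the $\psi_{1/2}$-maximum over $np^2$ copies inflates by a $\log^2(np)$ factor, giving $\|M\|_{\psi_{1/2}}\lesssim\bar b^2\log^2(np)/\us$. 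The Gaussian piece of the maximal inequality then contributes $\lesssim\us^{-1/2}\log^{1/2}(np)$ and the $\psi_{1/2}$-tail contributes $\lesssim\us^{-1}\log^4(np)$; the latter is absorbed into $\us^{-1/2}\log^{3/2}(np)$ precisely under $\log^5(np)\le\us$. In case (ii) I would use the polynomial version with index $q/2\ge 2$; the crucial estimate $\E[M^{q/2}]\le n\bar b^q/\us^{q/2}$ follows from reducing $\max_{j,k}|X_{ij}X_{ik}|$ to $(\max_j|X_{ij}|)^2$ and invoking (D) together with a union bound over $i$. Choosing $t$ so that the polynomial tail equals $\gamma$ gives $t\lesssim \bar b^2\gamma^{-2/q}n^{2/q}/\us$, and folding in a $\log(np)$ factor from the expectation bound yields the asserted $\gamma^{-2/q}\us^{-1}n^{2/q}\log(np)$.

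The main technical obstacle is bookkeeping of logarithmic exponents in case (i): the $\log^4(np)$ contribution from the $\psi_{1/2}$-tail in $\hat\Delta_1,\hat\Delta_2$ must be absorbed into the target $\us^{-1/2}\log^{3/2}(np)$, which is exactly why the hypothesis $\log^5(np)\le\us$ appears. A secondary care point in case (ii) is the reduction from the bivariate maximum $\max_{j,k}|X_{ij}X_{ik}|$ to the univariate object $(\max_j|X_{ij}|)^2$ before invoking (D), which is what produces the exponent $2/q$ (rather than $1/q$) in the polynomial-moment contribution, and which in turn matches the $\varpi_{2,n}$ term appearing in Theorem~\ref{thm:bootstrap_approx_cusum_statistic}(ii).
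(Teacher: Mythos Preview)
Your proposal is correct and follows essentially the same route as the paper: write each $\hat\Delta_i$ as a weighted partial sum with weights $b_{is}=s^{-1}\vone(i\le s)$ (or the reversed version), apply Lemma~\ref{lem:maximal_subexponential} with $\beta=1/2$ for the products $X_{ij}X_{ik}-\sigma_{jk}$ under (C) and with the polynomial tail at exponent $q/2$ under (D), and use Lemma~E.1 of \cite{cck2016a} for the expectation bound. The paper also exploits that $\hat\Delta_2$ has the same distribution as $\hat\Delta_1$ under $H_0$ by the i.i.d.\ assumption, which slightly shortens the argument, but otherwise your bookkeeping (in particular the identification of the $\psi_{1/2}$-tail contribution $\us^{-1}\log^4(np)$ and its absorption into $\us^{-1/2}\log^{3/2}(np)$ precisely when $\log^5(np)\le\us$) matches the paper's analysis.
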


	\begin{proof}[of Corollary \ref{cor:validitiy_bootstrap_cusum_test}]
		Under $H_0$, we write $\Prob = \Prob_0$. Let $Y_n$ be a joint Gaussian random vector defined in (\ref{eqn:gaussian_analog_cusum_statistic}) and $\bar{Y}_n = |Y_n|_\infty$. Let $\rho_\ominus(\alpha) = \Prob( \{T_n \le q_{T_{n}^{*} \mid X_{1}^{n}}(\alpha) \} \ominus \{T_n \le q_{\bar{Y}_n}(\alpha) \} )$ and $A \ominus B = (A\setminus B) \cup (B\setminus A)$ be the symmetric difference of two events $A$ and $B$. Note that 
		\begin{eqnarray*}
			|\Prob(T_n \le q_{T_{n}^{*} \mid X_{1}^{n}}(\alpha)) - \alpha| &\le&  |\Prob(T_n \le q_{T_{n}^{*} \mid X_{1}^{n}}(\alpha)) - \Prob(T_n \le q_{\bar{Y}_n}(\alpha))| + \rho(T_n, \bar{Y}_n) \\
			&\le&  \Prob( \{T_n \le q_{T_{n}^{*} \mid X_{1}^{n}}(\alpha) \} \ominus \{T_n \le q_{\bar{Y}_n}(\alpha) \} ) + \rho(T_n, \bar{Y}_n) \\
			&=& \rho_\ominus(\alpha) + \rho(T_n, \bar{Y}_n).
		\end{eqnarray*}
		By Lemma C.3 in \cite{chen2017+}, there exists a constant $C > 0$ only depending on $\ub$ such that for any real number $\bar\Delta > 0$, we have 
		$$
		\rho_\ominus(\alpha) \le 2 [ \rho(T_n, \bar{Y}_n) + C \bar\Delta^{1/3} \log^{2/3}(np) + \Prob(\hat\Delta > \bar\Delta) ],
		$$
		where $\hat\Delta$ is defined in (\ref{eqn:Delta_hat}).
		
		\underline{\it{Part (i)}}. Assume (C) and choose $\bar\Delta$ in (\ref{eqn:Delta_bound_subexp}). By Lemma \ref{lem:hatDelta}, we have $\Prob(\hat\Delta > \bar\Delta) < \gamma/2$. Combining with (\ref{eqn:GA_subexp}), we get (\ref{eqn:size_validity_bootstrap_cusum_subexp}). Uniform convergence of $\Prob(T_n \le q_{T_{n}^{*} \mid X_{1}^{n}}(\alpha))$ to $\alpha$ follows by choosing $\gamma = n^{-1}$.
		
		\underline{\it{Part (ii)}}. Assume (D) and choose $\bar\Delta$ in (\ref{eqn:Delta_bound_poly}). By Lemma \ref{lem:hatDelta}, we have $\Prob(\hat\Delta > \bar\Delta) < \gamma/2$. Combining with (\ref{eqn:GA_poly}), we get (\ref{eqn:size_validity_bootstrap_cusum_poly}). Uniform convergence of $\Prob(T_n \le q_{T_{n}^{*} \mid X_{1}^{n}}(\alpha))$ to $\alpha$ follows by choosing $\gamma = [\log(np)]^{-q\varepsilon/2}$.
	\end{proof}
	
	\subsection{Proof of Theorem \ref{thm:power_bootstrap_cusum_test}}
	
	\begin{proof}[of Theorem \ref{thm:power_bootstrap_cusum_test}]
		Under $H_1$, without loss of generality, we may assume $\mu=0$. Then  
		\begin{equation}
		\label{eqn:power_Y_mean_shift}
		\xi_{i} = \left\{
		\begin{array}{ll}
		X_i, & \text{if } 1 \le i \le m \\
		X_i-\delta_n, & \text{if } m+1 \le i \le n \\
		\end{array} \right. .
		\end{equation}
		Observe that the CUSUM statistic (computed on $X_1,\dots,X_n$) in (\ref{eqn:cusum_mean_Rp}) can be written as 
		$$
		Z_n(s) = Z_n^\xi(s) + \Delta_s,
		$$
		where
		$$ 
		Z_n^\xi(s) = \sqrt{s(n-s) \over n} \left\{{1\over s} \sum_{i=1}^s \xi_i - {1 \over n-s} \sum_{i=s+1}^n \xi_i \right\}
		$$
		and \vspace{-0.05in}
		\begin{equation}
		\label{eqn:Delta_s}
		\vspace{-0.05in}
		\Delta_s = \left\{
		\begin{array}{ll}
		-\sqrt{s \over n(n-s)} (n-m)\delta_n, & \text{if } 1 \le s \le m \\
		-\sqrt{n-s \over ns} m \delta_n, & \text{if } m+1 \le s \le n \\
		\end{array} \right. 
		\end{equation}
		is the mean shift. Note that $|\Delta_s|_\infty$ reaches its maximum at $s=m$, i.e.,
		$$
		\max_{\us \le s \le n - \us} |\Delta_s|_\infty = \max_{1\le s \le n} |\Delta_s|_\infty = |\Delta_m|_\infty = \sqrt{m(n-m) \over n} |\delta_n|_\infty := \tilde\Delta.
		$$
		Let $ \tilde{T}_n = \max_{\us \le s \le n - \us}|Z_n^\xi(s)|_\infty$. Then we have 
		$$
		T_n = \max_{\us \le s \le n - \us}|Z_n(s)|_\infty
		= \max_{\us \le s \le n - \us}|Z_n^\xi(s) + \Delta_s|_\infty
		\ge \tilde\Delta -\tilde{T}_n,
		$$
		from which it follows that the type II error of our bootstrap test obeys
		\begin{equation}
		\label{eqn:type_II_error_lower_bound}
		\text{Type II error} = \Prob_1 (T_n \le q_{T_{n}^{*} \mid X_{1}^{n}}(1-\alpha)) \le \Prob_1 (\tilde{T}_n \ge \tilde \Delta - q_{T_{n}^{*} \mid X_{1}^{n}}(1-\alpha)).
		\end{equation}
		Let $\beta_n \in (0,1)$ and 
		$
		\hat\Delta := \hat\Delta(X_1^n) = q_{T_{n}^{*} \mid X_{1}^{n}}(1-\alpha) + q_{\tilde{T}_n}(1-\beta_n).
		$
		Clearly, $\hat\Delta$ is a random quantity that is $\sigma(X_1,\dots,X_n)$-measurable. Then, 
		\begin{eqnarray}
		\nonumber
		\Prob_1 (\tilde{T_n} \ge \tilde \Delta - q_{T_{n}^{*} \mid X_{1}^{n}}(1-\alpha)) &\le& \Prob_1 (\tilde{T}_n \ge \hat\Delta - q_{T_{n}^{*} \mid X_{1}^{n}}(1-\alpha)) + \Prob_1 (\hat\Delta > \tilde\Delta) \\
		\label{eqn:type_II_error_bound}
		&\le&  \beta_n + \Prob_1 (\hat\Delta > \tilde\Delta).
		\end{eqnarray}
		Observe that the distribution of $\tilde{T}_n$ does not depend on $\delta_n$. Hence, $\tilde{T}_n$ has the same distributions as $T_n$ under $H_0$. 
		
		\underline{\it{Part (i).}} Assume (C). Let $Y_n \sim N(0, (A^\top \otimes I_p) \Gamma (A \otimes I_p))$ be a joint mean-zero Gaussian random vector in $\R^{(n-2\us+1)p}$, where $A$ is defined in (\ref{eqn:a_is_cusum_statistic}). Denote $\bar{Y}_n = |Y_n|_\infty$.  By the Gaussian approximation (\ref{eqn:GA_subexp}), there exists a constant $C_1 := C_1(\ub, \bar{b}, K) > 0$ such that 
		$$
		\Prob(\tilde{T}_n > t) \le \Prob(\bar{Y}_n > t) + C_1 \varpi_{1,n}
		$$
		holds for all $t \in \R$. By Lemma 2.2.2 in \cite{vandervaartwellner1996}, $\|\bar{Y}_n\|_{\psi_2} \le C_2 \log^{1/2}(np)$, where $C_2 > 0$ is a constant depending only on $\bar{b}$. So we have $\forall t > 0$,
		$$
		\Prob(\bar{Y}_n > t) \le 2 \exp[-(t /\|\bar{Y}_n\|_{\psi_2})^2] \le 2 \exp[- C_2^{-2} \log^{-1}(np) t^2 ].
		$$
		Choosing $t = C_3 [\log(\zeta^{-1}) \log(np)]^{1/2}$ for some large enough constant $C_3 > 0$, we get $\Prob(\bar{Y}_n > t) \le 2 \zeta^{C_3^2/C_2^2} \le 2 \zeta$. Now, take $\beta_n = C_1 \varpi_{1,n} + 2 \zeta$.
		Since $q_{\tilde{T}_n}(1-\beta_n) = \inf\{t \in \R : \Prob(\tilde{T}_n > t) < \beta_n\}$, we deduce that
		\begin{equation}
		\label{eqn:type_II_error_tilde_T_n}
		q_{\tilde{T}_n}(1-\beta_n) \le C_3  [\log(\zeta^{-1}) \log(np)]^{1/2}.
		\end{equation}
		Next, we deal with $q_{T_{n}^{*} \mid X_{1}^{n}}(1-\alpha)$. Recall that $\hat{S}^-_{n,s}$ and $\hat{S}^+_{n,s}$ are defined in (\ref{eqn:cusum_sample_cov_mat}). By the Bonferroni inequality, we have 
		$$
		\Prob_{1} (\max_{\us \le s \le n-\us} |Z_n^*(s)|_\infty > t | X_1^n) \le 2np [ 1 - \Phi(t / \bar\psi) ],
		$$
		where
		$$
		\bar\psi^2 = \max_{\us \le s \le n-\us} \max_{1 \le j \le p} \left\{ {n-s \over n} \hat{S}^-_{n,s,jj} + {s \over n} \hat{S}^+_{n,s,jj} \right\},
		$$
		and $\hat{S}^-_{n,s,jj}$, $\hat{S}^+_{n,s,jj}$ are the $(j,j)$-th diagonal entry of $\hat{S}^-_{n,s}$, $\hat{S}^+_{n,s}$ respectively. Then it follows that there exists a universal constant $K_1 > 0$ such that 
		\begin{equation*}
		q_{T_{n}^{*} \mid X_{1}^{n}}(1-\alpha) \le \bar\psi \Phi^{-1} (1-\alpha/(2np)).
		\end{equation*}
		Next, we bound the quantiles $t_{n,\alpha} := \Phi^{-1} (1-\alpha/(2np))$. Recall that $n \ge 4$, $p \ge 3$, and $\alpha \in (0,1)$. Since $\Phi^{-1}(\cdot)$ is a strictly increasing function, we have $t_{n,\alpha} \ge  \Phi^{-1}(23/24) > 1.73$. By the standard Gaussian tail bound $1-\Phi(x) < \phi(x) / x$ for all $x > 0$, we deduce that 
		\[
		{\alpha \over 2np} = 1 - \Phi(t_{n,\alpha}) < {\phi(t_{n,\alpha}) \over t_{n,\alpha}} < 0.25 \exp\left(-{t_{n,\alpha}^2 \over 2}\right).
		\]
		Therefore, $t_{n,\alpha} < \sqrt{2 \log(np/(2\alpha))}$ and 
		\begin{equation}
		\label{eqn:type_II_error_T_n*}
		q_{T_{n}^{*} \mid X_{1}^{n}}(1-\alpha) \le K_1 \bar\psi \log^{1/2}(np/\alpha)
		\end{equation}
		for some universal constant $K_1 > 0$. Define 
		\begin{equation*}
		\begin{gathered}
		\hat{S}_{n,s}^{\xi,-} = {1 \over s} \sum_{i=1}^s (\xi_i - \bar{\xi}_s^-) (\xi_i - \bar{\xi}_s^-)^\top, \\
		\hat{S}_{n,s}^{\xi,+} = {1 \over n-s} \sum_{i=s+1}^n (\xi_i - \bar{\xi}_s^+) (\xi_i - \bar{\xi}_s^+)^\top,
		\end{gathered}
		\end{equation*}
		where $\bar{\xi}_s^- = s^{-1} \sum_{i=1}^s \xi_i$ and $\bar{\xi}_s^+ = (n-s)^{-1} \sum_{i=s+1}^n \xi_i$. By Lemma \ref{lem:bound_bar_psi}, we have 
		$$
		\bar\psi^2 \le 2 \max_{\us \le s \le n-\us} \max_{1 \le j \le p} \left\{ {n-s \over n} \hat{S}^{\xi,-}_{n,s,jj} + {s \over n} \hat{S}^{\xi,+}_{n,s,jj} \right\} + 4 |\delta_n|_\infty^2.
		$$
		Since 
		$$
		\max_{\us \le s \le n-\us} \max_{1 \le j \le p} {n-s \over n} \hat{S}^{\xi,-}_{n,s,jj} \le \max_{1 \le j \le p} \Sigma_{jj} + \max_{\us \le s \le n-\us} \max_{1 \le j \le p} \left| {1 \over s} \sum_{i=1}^s (\xi_{ij}^2  - \Sigma_{jj}) \right| + \max_{\us \le s \le n-\us} \max_{1 \le j \le p} \left| \bar{\xi}_{sj}^- \right|^2,
		$$
		it follows from Lemma \ref{lem:hatDelta} that there exists a constant $C_4 > 0$ depending only on $\bar{b}, K$ such that 
		$$
		\Prob\left( \max_{\us \le s \le n-\us} \max_{1 \le j \le p} \left| \sum_{i=1}^s {1 \over s} (\xi_{ij}^2  - \Sigma_{jj}) \right| \ge C_4 \us^{-1/2} \log^{3/2}(np) \right) \le \gamma / 4
		$$
		and the same probability bound holds for $ \max_{\us \le s \le n-\us} \max_{1 \le j \le p} | \bar{\xi}_{sj}^-|^2$. Combining with (\ref{eqn:type_II_error_T_n*}), we deduce that there exists a constant $C_5 > 0$ depending only on $\bar{b}, K$ such that 
		$$
		\Prob_{1} \left( q_{T_{n}^{*} \mid X_{1}^{n}}(1-\alpha) \ge C_5 \max\{|\delta_n|_\infty, 1\} \log^{1/2}(np/\alpha) \right) \le \gamma.
		$$
		Then (\ref{eqn:power_lower_bound_subexp}) follows from the last inequality together with (\ref{eqn:power_signal_lower_bound}), (\ref{eqn:type_II_error_bound}), and (\ref{eqn:type_II_error_tilde_T_n}).
		
		\underline{\it{Part (ii).}} Assume (D). By the Gaussian approximation (\ref{eqn:GA_poly}), there exists a constant $C_1 := C_1(\ub, \bar{b}, K, q) > 0$ such that 
		$$
		\Prob(\tilde{T}_n > t) \le \Prob(\bar{Y}_n > t) + C_1 \{\varpi_{1,n} + \varpi_{2,n}\}
		$$
		holds for all $t \in \R$. By the same argument as in Part (i), we have (\ref{eqn:type_II_error_T_n*}) and (\ref{eqn:type_II_error_tilde_T_n}) hold with $\beta_n = C_1 \{\varpi_{1,n} + \varpi_{2,n}\} + 2 \zeta$. By Lemma \ref{lem:hatDelta}, there exists a constant $C_2 > 0$ depending only on $\bar{b}, K, q$ such that 
		$$
		\Prob\left( \max_{\us \le s \le n-\us} \max_{1 \le j \le p} \left| \sum_{i=1}^s {1 \over s} (\xi_{ij}^2  - \Sigma_{jj}) \right| \ge C_2 \{ \us^{-1/2} \log^{1/2}(np) + \gamma^{-2/q} \us^{-1} n^{2/q} \log(np) \} \right) \le \gamma / 4
		$$
		and the same probability bound holds for $ \max_{\us \le s \le n-\us} \max_{1 \le j \le p} | \bar{\xi}_{sj}^-|^2$. Then the rest of the proof follows similar lines as in Part (i).
	\end{proof}

	\begin{lem}[Bound on $\bar\psi$]
		\label{lem:bound_bar_psi}
		Assume that $X_1,\dots,X_n$ are independent random vectors that are generated from the model (\ref{eqn:mean_shifting_model}). Then we have 
		\begin{equation}
		\label{eqn:bound_bar_psi}
		\bar\psi^2 \le 2 \max_{\us \le s \le n-\us} \max_{1 \le j \le p} \left\{ {n-s \over n} \hat{S}^{\xi,-}_{n,s,jj} + {s \over n} \hat{S}^{\xi,+}_{n,s,jj} \right\} + 4 |\delta_n|_\infty^2.
		\end{equation}
	\end{lem}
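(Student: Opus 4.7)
The plan is a direct decomposition argument: write $X_i - \bar X_s^{\pm}$ as $(\xi_i - \bar\xi_s^{\pm})$ plus a deterministic correction proportional to $\delta_n$, then apply the elementary inequality $(a+b)^2 \le 2a^2 + 2b^2$ coordinatewise inside the sums that define $\hat{S}^{\pm}_{n,s,jj}$.

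More concretely, using the mean-shift representation \eqref{eqn:power_Y_mean_shift}, I would split into the two cases $s \le m$ and $s > m$. For $s \le m$, the left sample mean $\bar X_s^-$ is computed entirely from pre-change indices, so $\bar X_s^- = \bar\xi_s^-$ and hence $\hat S^-_{n,s,jj} = \hat S^{\xi,-}_{n,s,jj}$ exactly. For the right sample mean, a direct computation gives $\bar X_s^+ = \bar\xi_s^+ + \tfrac{n-m}{n-s}\delta_n$, so that $X_i - \bar X_s^+ = (\xi_i - \bar\xi_s^+) + c_{is}$, where the correction is $c_{is} = -\tfrac{n-m}{n-s}\delta_n$ for $s < i \le m$ and $c_{is} = \tfrac{m-s}{n-s}\delta_n$ for $m < i \le n$. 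Both correction vectors have $\ell^\infty$-norm bounded by $|\delta_n|_\infty$, so $(X_i-\bar X_s^+)_j^2 \le 2(\xi_i - \bar\xi_s^+)_j^2 + 2|\delta_n|_\infty^2$ and averaging over $i = s+1,\dots,n$ yields $\hat S^+_{n,s,jj} \le 2\hat S^{\xi,+}_{n,s,jj} + 2|\delta_n|_\infty^2$. The case $s > m$ is entirely symmetric: now $\bar X_s^+ = \bar\xi_s^+ + \delta_n$ so $\hat S^+_{n,s,jj} = \hat S^{\xi,+}_{n,s,jj}$ while $\bar X_s^- = \bar\xi_s^- + \tfrac{s-m}{s}\delta_n$, and a coordinatewise splitting as above gives $\hat S^-_{n,s,jj} \le 2\hat S^{\xi,-}_{n,s,jj} + 2|\delta_n|_\infty^2$.

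Combining the two cases, in either regime one has
\[
\frac{n-s}{n}\hat S^-_{n,s,jj} + \frac{s}{n}\hat S^+_{n,s,jj} \le 2\left[\frac{n-s}{n}\hat S^{\xi,-}_{n,s,jj} + \frac{s}{n}\hat S^{\xi,+}_{n,s,jj}\right] + 2|\delta_n|_\infty^2,
\]
and taking the maximum over $\us \le s \le n-\us$ and $1 \le j \le p$ delivers \eqref{eqn:bound_bar_psi} (with the stated constant $4$ on $|\delta_n|_\infty^2$ providing ample slack).

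This step is mostly bookkeeping; there is no real obstacle. The only thing to be a little careful about is tracking precisely which of the summation indices $i \in \{s+1,\dots,n\}$ (or $\{1,\dots,s\}$) are shifted by $\delta_n$ relative to where the change point $m$ sits, so that the formulas for $\bar X_s^{\pm}$ and the corrections $c_{is}$ are correct and the $\ell^\infty$-norm bound $|c_{is}|_\infty \le |\delta_n|_\infty$ is uniform in $i$ and $s$.
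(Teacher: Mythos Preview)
Your argument is correct. It is also a genuinely different route from the paper's proof.

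The paper does not work directly with the sums defining $\hat S^{\pm}_{n,s,jj}$. Instead, it recalls that
\[
\bar\psi^2 = \max_{\us \le s \le n-\us}\max_{1 \le j \le p}\Var\big(Z^*_{nj}(s)\,\big|\,X_1^n\big),
\]
writes $Z^*_n(s)$ as a linear combination in the Gaussian multipliers $e_1,\dots,e_n$, and then decomposes that linear combination into the corresponding $\xi$-bootstrap piece $Z^{\xi,*}_n(s)$ plus a correction of the form $\delta_n \cdot (\text{weighted sum of } e_i)$. The Cauchy--Schwarz inequality is applied at the level of these conditional variances (i.e., $\Var(A+B) \le 2\Var(A)+2\Var(B)$), and the residual term is controlled by explicitly evaluating $\sum_i b_{is}^2$ and $\sum_i (b'_{is})^2$. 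This is natural given how $\bar\psi^2$ enters the proof of Theorem \ref{thm:power_bootstrap_cusum_test}, but it introduces the multiplier variables unnecessarily for a statement that is purely about sample covariances.

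Your approach bypasses the $e_i$ entirely: you decompose $X_i - \bar X_s^{\pm}$ pointwise as $(\xi_i - \bar\xi_s^{\pm}) + c_{is}$ with $|c_{is}|_\infty \le |\delta_n|_\infty$, and apply $(a+b)^2 \le 2a^2+2b^2$ inside the empirical averages. This is shorter and more elementary, and---since for each fixed $s$ only one of the two sides (left or right) is perturbed---it in fact delivers the constant $2$ rather than $4$ on $|\delta_n|_\infty^2$, as you observed. The paper's bound loses that factor because it sums the two residual contributions without exploiting that they are never simultaneously active.
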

	
	\subsection{Proof of Theorem \ref{thm:rate_location_estimator}}
	
	\begin{proof}[of Theorem \ref{thm:rate_location_estimator}]
		In this proof, we use $K_1, K_2,\dots$ to denote universal constants. Note that $\E [Z_n(s)] = \Delta_s$, where $\Delta_s$ is defined in (\ref{eqn:Delta_s}). Therefore, $|\E [Z_n(\cdot)] |_\infty$ reaches its maximum at $m$ and we have 
		$$
		|\E [Z_n(m)]|_\infty - |\E [Z_n(s)]|_\infty = \left\{
		\begin{array}{cc}
		\sqrt{m(n-m) \over n} \Big(1-\sqrt{s(n-m) \over m(n-s)}\Big) |\delta_n|_\infty, & \text{if } 1 \le s \le m \\
		\sqrt{m(n-m) \over n} \Big(1-\sqrt{m(n-s) \over s(n-m)}\Big) |\delta_n|_\infty, & \text{if } m < s \le n \\
		\end{array} \right. .
		$$
		If $1 \le s \le m$, then
		\begin{eqnarray*}
			1-\sqrt{s(n-m) \over m(n-s)} &=& {(\sqrt{m(n-s)} - \sqrt{s(n-m)})(\sqrt{m(n-s)} + \sqrt{s(n-m)}) \over \sqrt{m(n-s)}(\sqrt{m(n-s)} + \sqrt{s(n-m)})}\\
			&=& {n(m-s) \over m(n-s) +\sqrt{m(n-m)s(n-s)}}\\
			&\ge& {n(m-s) \over mn+ {n\over 2} \sqrt{m(n-m)} }\\
			&\ge& {m-s \over \sqrt{m}(\sqrt{m} + \sqrt{n-m})}.
		\end{eqnarray*}
		So we get
		\begin{eqnarray*}
			\sqrt{m(n-m) \over n} \Big(1-\sqrt{s(n-m) \over m(n-s)}\Big) |\delta_n|_\infty &\ge& \sqrt{m(n-m) \over n}  {m-s \over \sqrt{m}(\sqrt{m} + \sqrt{n-m})} |\delta_n|_\infty \\
			&=&  \sqrt{n} {\sqrt{1-t_{m}} \over \sqrt{t_{m}} + \sqrt{1-t_{m}}} (t_{m}-t_{s}) |\delta_n|_\infty.
		\end{eqnarray*}
		Likewise, if $m < s \le n$, then 
		$$
		1-\sqrt{m(n-s) \over s(n-m)} \ge {s-m \over \sqrt{n-m}(\sqrt{m} + \sqrt{n-m})}
		$$
		and
		$$
		\sqrt{m(n-m) \over n} \Big(1-\sqrt{m(n-s) \over s(n-m)}\Big) |\delta_n|_\infty \ge   \sqrt{n}{\sqrt{t_{m}} \over \sqrt{t_{m}} + \sqrt{1-t_{m}}} (t_{s}-t_{m}) |\delta_n|_\infty.
		$$
		Hence, for any $1 \le s \le n$, we have 
		\begin{equation}
		\label{eq:EZdiff}
		|\E [Z_n(m)]|_\infty - |\E [Z_n(s)]|_\infty \ge   \sqrt{n}\,{\sqrt{t_{m}} \wedge \sqrt{1-t_{m}} \over \sqrt{t_{m}} + \sqrt{1-t_{m}}} |t_{m}-t_{s}| |\delta_n|_\infty.
		\end{equation}
		By the triangle inequality, 
		\begin{eqnarray*}
			|Z_n(s)|_\infty - |Z_n(m)|_\infty &\le& |Z_n(s)-\E [Z_n(s)]|_\infty + |\E [Z_n(s)]|_\infty \\
			&& \qquad + |Z_n(m)-\E [Z_n(m)]|_\infty - |\E [Z_n(m)]|_\infty\\
			&\le& 2\max_{1 \le s \le n}  |Z_n(s)-\E Z_n(s)|_\infty + |\E [Z_n(s)]|_\infty  - |\E [Z_n(m)]|_\infty.
		\end{eqnarray*}
		Combining the last inequality with (\ref{eq:EZdiff}) and using $\sqrt{t_m} + \sqrt{1-t_m} \le \sqrt{2}$, we get
		$$
		\sqrt{n \over 2} (\sqrt{t_{m}} \wedge \sqrt{1-t_{m}}) |t_{m}-t_{s}|  |\delta_n|_\infty \le 2\max_{1 \le s \le n}  |Z_n(s)-\E [Z_n(s)]|_\infty + |Z_n(m)|_\infty - |Z_n(s)|_\infty.
		$$
		Replacing $s$ by $\hat{m}_{1/2}$ and noticing that $|Z_n(m)|_\infty \le |Z_n(\hat{m}_{1/2})|_\infty$, we obtain that
		\begin{align}
		\nonumber
		|t_{m}-t_{\hat{m}_{1/2}}| \le & {2 \sqrt{2} \max_{1 \le s \le n}  |Z_n(s)-\E [Z_n(s)]|_\infty \over \sqrt{n} (\sqrt{t_{m}} \wedge \sqrt{1-t_{m}}) |\delta_n|_\infty} \\
		\label{eqn:location_bound}
		\le & {2 \sqrt{2} \max_{1 \le s \le n}  |Z_n(s)-\E [Z_n(s)]|_\infty \over\sqrt{n t_m (1-t_m)} |\delta_n|_\infty},
		\end{align}
		where the last step follows from the inequality $t (1-t) \le t \wedge (1-t) \le 2 t (1-t)$ for all $t \in [0,1]$. Next, we bound $\max_{1 \le s \le n}  |Z_n(s)-\E [Z_n(s)]|_\infty$. Recall that $Z_n(s)-\E [Z_n(s)] = \sum_{i=1}^n a_{is}(X_i-\mu_i)$, where $a_{is}$ is defined in (\ref{eqn:a_is_cusum_statistic}). 
		
		\underline{\it{Part (i).}} Assume (C). By Theorem 4  in \cite{adamczak2008}, we have $\forall t > 0$,
		\begin{eqnarray}
		\nonumber
		&& \Prob(\max_{1 \le s \le n} |Z_n(s) - \E[Z_n(s)]|_\infty \ge 2 \E[\max_{1 \le s \le n} |Z_n(s) - \E[Z_n(s)]|_\infty] + t) \\
		\label{eqn:location_rate_concentration_subexp}
		&& \qquad \le \exp\left(-{t^2 \over 3 \tau^2}\right) + 3 \exp\left(-{t \over K_1 \|M\|_{\psi_1}} \right),
		\end{eqnarray}
		where 
		\begin{eqnarray*}
			\tau^2 &=& \max_{1 \le s \le n} \max_{1 \le j \le p} \sum_{i=1}^n a_{is}^2 \E(X_{ij}-\mu_{ij})^2, \\
			M &=& \max_{1 \le i,s \le n} \max_{1 \le j \le p} |a_{is} (X_{ij}-\mu_{ij})|.
		\end{eqnarray*}
		Since $\sum_{i=1}^n a_{is}^2 = 1$, we have $\tau^2 \le \bar{b}$. Since $\max_{1 \le i,s \le n} |a_{is}| \le 1$, by Lemma 2.2.2 in  \cite{vandervaartwellner1996}, we have $\|M\|_2 \le 2 \|M\|_{\psi_1} \le 2 \bar{b} \log(np)$. By Lemma E.1 in \cite{cck2016a}, we have 
		\begin{equation}
		\label{eqn:rate_location_expectation_bound}
		\E[\max_{1 \le s \le n} |Z_n(s) - \E[Z_n(s)]|_\infty] \le K_2 \{ \tau \log^{1/2}(np) + \|M\|_2 \log(np) \}.
		\end{equation}
		Choosing $t = K_3 \bar{b} \log(np) \log(\gamma^{-1})$ in (\ref{eqn:location_rate_concentration_subexp}) for some large enough universal constant $K_3 > 0$, we deduce that there exists a constant $C := C(\bar{b},K) > 0$ such that 
		$$
		\Prob(\max_{1 \le s \le n} |Z_n(s) - \E[Z_n(s)]|_\infty \ge C \log^2(np) ) \le 2 \gamma.
		$$
		Combining the last inequality with (\ref{eqn:location_bound}), we obtain (\ref{eqn:rate_location_estimator_subexp}).
		
		\underline{\it{Part (ii).}} Assume (D) with $q \ge 2$. By Theorem 2 in \cite{adamczak2010_JTP}, we have $\forall t > 0$,
		\begin{eqnarray}
		\nonumber
		&& \Prob(\max_{1 \le s \le n} |Z_n(s) - \E[Z_n(s)]|_\infty \ge 2 \E[\max_{1 \le s \le n} |Z_n(s) - \E[Z_n(s)]|_\infty] + t) \\
		\label{eqn:location_rate_concentration_poly}
		&& \qquad \le \exp\left(-{t^2 \over 3 \tau^2}\right) + C(q) {\E[M^q] \over t^q},
		\end{eqnarray}
		where $\tau^2$ and $M$ have the same definitions as in Part (i). By Lemma 2.2.2 in \cite{vandervaartwellner1996}, we have $\|M\|_2 \le \|M\|_q \le n^{1/q} \bar{b}$. Note that $\tau^2 \le \bar{b}$ and $\E[\max_{1 \le s \le n} |Z_n(s) - \E[Z_n(s)]|_\infty]$ obeys the bound in (\ref{eqn:rate_location_expectation_bound}). Hence, choosing 
		$$t = C(q) \{\bar{b}^{1/2} \log(\gamma^{-1})+ n^{1/q} \bar{b}^{1/q} \gamma^{-1/q} \}$$
		in (\ref{eqn:location_rate_concentration_poly}), we get 
		$$
		\Prob\left(\max_{1 \le s \le n} |Z_n(s) - \E[Z_n(s)]|_\infty \ge C(q)  n^{1/q} (\log(np) + \gamma^{-1/q}) \right) \le 2 \log^{-q}(np).
		$$
		Combining the last inequality with (\ref{eqn:location_bound}), we obtain (\ref{eqn:rate_location_estimator_poly}).
	\end{proof}
	
	%
	%
	
	\subsection{Proof of Theorem \ref{thm:rate_location_estimator_nonstationary}}
	
	\begin{proof}[of Theorem \ref{thm:rate_location_estimator_nonstationary}]
		Without loss of generality, we may assume $\delta_{nj} \le 0$ for all $1 \le j \le p$. In addition, we may assume that
		\begin{equation}
		\label{eqn:reduction_min_signal_delta}
		\underline{\delta}_{n} \gg {\log^2(np) \over n^{1/2}}
		\end{equation}
		in Part (i), and 
		\begin{equation}
		\label{eqn:reduction_min_signal_delta_poly}
		\underline{\delta}_n \gg {\log^{1/2}(np) \over n^{1/2}} \vee {\log(np) \over \gamma^{1/q}  n^{1/2-1/q}}
		\end{equation}
		in Part (ii), because otherwise (\ref{eqn:rate_cp_estimation_theta=0_subsexp}) and (\ref{eqn:rate_cp_estimation_theta=0_poly}) trivially hold by choosing the constant $C > 0$ large enough. Denote $h(t_m) = t_m \wedge (1-t_m)$. To simplify the notation, we write  
		\[
		\tilde{Z}_n(s) := Z_{0,n}(s) = \sum_{i=1}^s X_i - {s \over n} \sum_{i=1}^n X_i = \sqrt{s(n-s) \over n} Z_n(s),
		\]
		where $Z_n(s)$ is defined in (\ref{eqn:cusum_mean_Rp}), and $\tilde{m} = \hat{m}_0$. Let $j^*$ be an index in $\{1,\dots,p\}$ such that $\max_{1 \le j \le p} \tilde{Z}_{nj}(m) = \tilde{Z}_{nj^*}(m)$. It is clear that $j^*$ is a random variable depending on $m$. By Lemma \ref{lem:prob_event_G}, $\tilde{Z}_{nj^*}(m) = \max_{1 \le j \le p} |\tilde{Z}_{nj}(m)| \ge 0$ holds with probability greater than $1-\gamma/36$. For $r \ge 1$, observe that 
		\begin{align*}
		& \{ |t_{\tilde{m}} - t_m| > r/n \} \\
		\subset & \{ \tilde{m}-m > r \} \medcup \{ \tilde{m}-m < -(r-1) \} \\
		\subset & \{ \max_{s \le m+r} |\tilde{Z}_n(s)|_\infty < |\tilde{Z}_n(\tilde{m})|_\infty \} \medcup \{ \max_{s \le m-(r-1)} |\tilde{Z}_n(s)|_\infty = |\tilde{Z}_n(\tilde{m})|_\infty \} \\
		\subset & \{ \max_{s \ge m+r} |\tilde{Z}_n(s)|_\infty \ge |\tilde{Z}_n(m)|_\infty \} \medcup \{ \max_{s \le m-(r-1)} |\tilde{Z}_n(s)|_\infty \ge |\tilde{Z}_n(m)|_\infty \}.
		\end{align*}
		Thus we have $\Prob( |t_{\tilde{m}} - t_m| > r/n ) \le I + II$, where 
		\begin{align*}
		I =& \Prob\left( \max_{s \ge m+r} |\tilde{Z}_n(s)|_\infty \ge |\tilde{Z}_n(m)|_\infty \right) \le \Prob\left( \max_{s \ge m+r} |\tilde{Z}_n(s)|_\infty \ge \tilde{Z}_{nj^*}(m) \right), \\
		II =& \Prob\left( \max_{s \le m-(r-1)} |\tilde{Z}_n(s)|_\infty \ge |\tilde{Z}_n(m)|_\infty \right) \le \Prob\left( \max_{s \le m-(r-1)} |\tilde{Z}_n(s)|_\infty \ge \tilde{Z}_{nj^*}(m) \right).
		\end{align*}
		Because of the symmetry, we only deal with $I$ since $II$ obeys the same bound. Let 
		\[
		r = \left\{
		\begin{array}{cc}
		C(\bar{b}, K, c_1, c_2) {\log^4(np) \over \underline{\delta}_n^{2}} & \text{in Part (i)} \\
		C(\bar{b}, K, q, c_1, c_2) {\log(np) \over \underline{\delta}_n^{2}} \max \left\{1, { n^{2/q} \log(np) \over \gamma^{2/q} } \right\} & \text{in Part (ii)}
		\end{array} \right. 
		\]
		and $\mathcal{G}$ be the event where $\max_{s \ge m+r} |\tilde{Z}_n(s)|_\infty$ is attained at the coordinates of $j \in \calS$, i.e.,
		\begin{align}
		\label{eqn:event_G}
		\mathcal{G} = & \left\{ \max_{s \ge m+r} \max_{1 \le j \le p} \left|\tilde{Z}_{nj}(s) \right| = \max_{s \ge m+r} \max_{j \in \calS} \left|\tilde{Z}_{nj}(s) \right| \right\}.
		\end{align}
		By Lemma \ref{lem:prob_event_G}, $\Prob	(\mathcal{G}) \ge 1- \gamma/18$.  From now on, our analysis will be restricted to events $\mathcal{G}$ and $\tilde{Z}_{nj^*}(m) \ge 0$, where the union event holds for probability greater than $1- \gamma/12$. Note that $|x| \ge y \ge 0$ implies that either $x-y \ge 0$ or $x+y \le 0$. Then we have 
		\begin{align*}
		I \le & \Prob\left( \max_{s \ge m+r} \max_{j \in \calS} |\tilde{Z}_n(s)|_\infty \ge \tilde{Z}_{nj^*}(m) \right) + \gamma/12 \\
		\le & \Prob \left( \medcup_{s \ge m+r} \medcup_{j \in \calS} \left\{ \tilde{Z}_{nj}(s) - \tilde{Z}_{nj^*}(m) \ge 0 \right\} \right) \\
		& \quad + \Prob \left( \medcup_{s \ge m+r} \medcup_{j \in \calS} \left\{ \tilde{Z}_{nj}(s) + \tilde{Z}_{nj^*}(m) \le 0 \right\} \right) + \gamma/12\\
		\le & \Prob \left( \max_{s \ge m+r} \left[ \max_{j \in \calS} \tilde{Z}_{nj}(s) - \max_{j \in \calS} \tilde{Z}_{nj}(m) \right] \ge 0 \right) \\
		& \quad + \Prob \left( \min_{s \ge m+r} \left[ \min_{j \in \calS} \tilde{Z}_{nj}(s) + \max_{j \in \calS} \tilde{Z}_{nj}(m) \right] \le 0 \right)  + \gamma/12\\
		:= & III + IV + \gamma/12.
		\end{align*}
		Since $\max_i a_i - \max_i b_i \le \max_i(a_i-b_i)$ and $\min_i a_i - \min_i b_i \ge \min_i (a_i-b_i)$ hold for any sequences $\{a_i\}$ and $\{b_i\}$, we have 
		\begin{align*}
		III \le& \Prob \left( \max_{s \ge m+r}  \max_{j \in \calS} \left[ \tilde{Z}_{nj}(s) - \tilde{Z}_{nj}(m) \right] \ge 0 \right), \\
		IV \le& \Prob \left( \min_{s \ge m+r} \min_{j \in \calS} \left[  \tilde{Z}_{nj}(s) + \tilde{Z}_{nj}(m) \right] \le 0 \right).
		\end{align*}
		For each $s \ge m+r$ and $j = 1,\dots,p$, since $\tilde{Z}_{nj}(s) - \tilde{Z}_{nj}(m) \ge 0$ if and only if 
		\begin{align*}
		\tilde{Z}_{nj}(s) - \E[\tilde{Z}_{nj}(s)] & - \tilde{Z}_{nj}(m) + \E[\tilde{Z}_{nj}(m)] \\
		& \ge \E[\tilde{Z}_{nj}(m)] - \E[\tilde{Z}_{nj}(s)]  = -{s-m \over n} m \delta_{nj}.
		\end{align*}
		Since $\xi_i = X_i - \E(X_i)$ and $\delta_{nj} \le 0$ for all $1 \le j \le p$, we have 
		\begin{align*}
		& \left\{ \tilde{Z}_{nj}(s) - \tilde{Z}_{nj}(m) \ge 0 \right\} \subset \left\{ \left| \sum_{i=m+1}^s \xi_{ij} - {s-m \over n} \sum_{i=1}^n \xi_{ij} \right| \ge (s-m) h(t_m) |\delta_{nj}| \right\} \\
		& \qquad \subset \left\{ \left| \sum_{i=m+1}^s \xi_{ij} \right| \ge {1 \over 2} (s-m) h(t_m) |\delta_{nj}| \right\} \medcup \left\{ {1 \over n} \left| \sum_{i=1}^n \xi_{ij} \right| \ge {1 \over 2} h(t_m) |\delta_{nj}| \right\}.
		\end{align*}
		Then we have $III \le V+ VI$, where
		\begin{align}
		\nonumber
		V = & \Prob\left( \max_{s \ge m+r}  \max_{j \in \calS}  \left| {1 \over s-m} \sum_{i=m+1}^s \xi_{ij} \right| \ge {h(t_m) \over 2} \underline{\delta}_{n} \right) \\
		\label{eqn:rate_theta=0_V}
		\le & \Prob\left( \max_{r \le s' \le n-m}  \max_{j \in \calS}  \left| {1 \over s'} \sum_{i=1}^{s'} \xi_{ij} \right| \ge {h(t_m) \over 2} \underline{\delta}_{n} \right), \\
		\label{eqn:rate_theta=0_VI}
		VI = & \Prob\left( \max_{j \in \calS}  \left| {1 \over n} \sum_{i=1}^n \xi_{ij} \right| \ge {h(t_m) \over 2} \underline{\delta}_{n} \right).
		\end{align}
		Here the second inequality for bounding $V$ is due to $\xi_1,\dots,\xi_n$ are i.i.d.
		Similarly, for each $s \ge m+r$ and $j = 1,\dots,p$, $-\tilde{Z}_{nj}(s) - \tilde{Z}_{nj}(m) \ge 0$ if and only if 
		\begin{align*}
		-\tilde{Z}_{nj}(s) + \E[\tilde{Z}_{nj}(s)] &- \tilde{Z}_{nj}(m) + \E[\tilde{Z}_{nj}(m)] \\
		&\ge \E[\tilde{Z}_{nj}(m)] + \E[\tilde{Z}_{nj}(s)]  = -{2n-s-m \over n} m \delta_{nj}.
		\end{align*}
		Then we have 
		\begin{align*}
		& \qquad \left\{ -\tilde{Z}_{nj}(s) - \tilde{Z}_{nj}(m) \ge 0 \right\} \\
		& \subset \left\{ \left| {1 \over n} \sum_{i=1}^n \xi_{ij} \right| \ge {h(t_m) \over 2} |\delta_{nj}| \right\} \medcup \left\{ {1 \over 2n-s-m} \left| \sum_{i=m+1}^n \xi_{ij} + \sum_{i=s+1}^n \xi_{ij} \right| \ge {h(t_m) \over 2} |\delta_{nj}| \right\}.
		\end{align*}
		Since $(2n-s-m)^{-1} \le (n-m)^{-1}$, we have 
		\begin{align*}
		&\left\{ {1 \over 2n-s-m} \left| \sum_{i=m+1}^n \xi_{ij} + \sum_{i=s+1}^n \xi_{ij} \right| \ge {h(t_m) \over 2} |\delta_{nj}| \right\}\\
		&\subset  \left\{ {1 \over n-m} \left| \sum_{i=m+1}^n \xi_{ij}  \right| \ge {h(t_m) \over 4} |\delta_{nj}| \right\} \medcup \left\{ {1 \over n-m} \left|  \sum_{i=s+1}^n \xi_{ij} \right| \ge {h(t_m) \over 4} |\delta_{nj}| \right\}.
		\end{align*}
		Then we obtain that 
		\begin{align*}
		IV =  \Prob \left( \max_{s \ge m+r} \max_{j \in \calS} \left[ - \tilde{Z}_{nj}(s) - \tilde{Z}_{nj}(m) \right] \ge 0 \right) \le  VI + 2 VII,
		\end{align*}
		where 
		\begin{equation}
		\label{eqn:rate_theta=0_VII}
		VII = \Prob \left( \max_{s \ge m} \max_{j \in \calS} \left| {1 \over n} \sum_{i=s+1}^n \xi_{ij} \right| \ge { h(t_m) \over 8} \underline{\delta}_{n} \right).
		\end{equation}
		So now we have $I \le V + 2VI +2VII + \gamma/12$.

		\underline{\it{Part (i)}}. 	Suppose (C) holds. 
		To bound V, applying Lemma \ref{lem:maximal_subexponential}, we have for any $u > 0$
		\begin{align*}
		& \Prob \left( \max_{r \le s' \le n-m}  \max_{1 \le j \le p}  \left|  \sum_{i=1}^{s'} {1 \over s'} \xi_{ij} \right| \ge 2 \E\left[  \max_{r \le s' \le n-m} \max_{1 \le j \le p} \left| \sum_{i=1}^{s'} {1 \over s'} \xi_{ij} \right| \right] + u \right)\\
		& \qquad \le \exp \left( -{u^2 \over 3 \tau_1^2} \right) + 3 \exp \left( -{u \over K_1 \|M_1\|_{\psi_1}}\right),
		\end{align*}
		where
		\begin{align*}
		\tau_1^2 = \max_{r \le s' \le n-m} \max_{1 \le j \le p} \sum_{i=1}^{s'} {1 \over s'^2} \E(\xi_{ij}^2)  \quad \text{and} \quad M_1 = \max_{1 \le i \le n} \max_{r \le s' \le n-m} \max_{1 \le j \le p} {1 \over s'} |\xi_{ij}|.
		\end{align*}
		Note that $\tau_1^2 \le r^{-1} \bar{b} $ and 
		\begin{align*}
		\|M_1\|_2 & \le K_2 \|M_1\|_{\psi_1} = K_2 \left\| \max_{1 \le i \le n} \max_{1 \le j \le p} \max_{r \le s' \le n-m} ({s'}^{-1}) |\xi_{ij}| \right\|_{\psi_1} \\
		& \quad \le K_2 r^{-1} \log(np) \max_{1 \le i \le n} \max_{1 \le j \le p}  \left\| \xi_{ij} \right\|_{\psi_1} \le K_2 \bar{b} r^{-1} \log(np).
		\end{align*}
		Using Lemma E.1 in \cite{cck2016a}, we have
		\begin{align*}
		\E\left[  \max_{r \le s' \le n-m} \max_{1 \le j \le p} \left| \sum_{i=1}^{s'} {1 \over s'} \xi_{ij} \right| \right] \le & K_3 \left\{ \sqrt{\log(np)} \tau_1 + \log(np) \|M_1\|_2 \right\} \\
		\le & C_1(\bar{b}) \left\{ r^{-1/2} \log^{1/2}(np) + r^{-1} \log^2(np) \right\}\\
		\le & C_1(\bar{b}) r^{-1/2} \log^2(np).
		\end{align*}
		Therefore, we have 
		\begin{align*}
		& \Prob \left( \max_{r \le s' \le n-m}  \max_{1 \le j \le p}  \left|  \sum_{i=1}^{s'} {1 \over s'} \xi_{ij} \right| \ge 2  C_1(\bar{b}) r^{-1/2} \log^2(np) + u \right)\\
		& \qquad \le \exp \left( -{r u^2 \over 3 \bar{b}} \right) + 3 \exp \left( -{r u \over K_3 \bar{b} \log(np)}\right).
		\end{align*}
		Let $u^* = C^*(\bar{b}, K) r^{-1/2} \log^2(np)$. Then it follows from the assumption $\log(1/\gamma) \le K \log(np)$ that 
		\begin{align*}
		V \le \Prob \left( \max_{r \le s' \le n-m}  \max_{1 \le j \le p}  \left| {1 \over s'} \sum_{i=1}^{s'} \xi_{ij} \right| \ge u^* \right)  \le \gamma/12.
		\end{align*} 
		%
		Similarly, to bound VI, by Lemma \ref{lem:maximal_subexponential}, we have for any $u > 0$
		\begin{align*}
		& \Prob\left( \max_{1 \le j \le p}  \left| {1 \over n} \sum_{i=1}^n \xi_{ij} \right| \ge  2 \E \left[\max_{1 \le j \le p}  \left| {1 \over n} \sum_{i=1}^n \xi_{ij} \right|\right] + u \right) \\
		& \qquad \le  \exp \left( -{u^2 \over 3 \tau_{2}^2} \right) + 3 \exp \left( -{u \over K_1 \|M_{2}\|_{\psi_1}}\right),
		\end{align*}
		where
		\begin{align*}
		\tau_{2}^2 = \max_{1 \le j \le p} \sum_{i=1}^{n} {1 \over n^2} \E(\xi_{ij}^2) \le \bar{b} n^{-1} \quad \text{and} \quad M_{2} = \max_{1 \le i \le n} \max_{1 \le j \le p} {1 \over n} |\xi_{ij}|.
		\end{align*}
		Note that $\|M_{2}\|_2 \le \sqrt{2} \|M_{2}\|_{\psi_1} \le K_4 \bar{b}  n^{-1} \log(np)$. Since $\log^3(np) \le K n$, we have 
		\begin{align*}
		\E \left[\max_{1 \le j \le p}  \left| {1 \over n} \sum_{i=1}^n \xi_{ij} \right|\right] \le & C_2(\bar{b}) \left\{ n^{-1/2} \log^{1/2}(np) + n^{-1} \log^2(np) \right\} \\
		\le & C_2(\bar{b}, K) n^{-1/2} \log^{1/2}(np).
		\end{align*}
		Let $u^\diamond = C^\diamond(\bar{b}, K) n^{-1/2} \log^{1/2}(np)$. Then it yields that 
		\begin{align*}
		VI \le 	\Prob\left( \max_{1 \le j \le p}  \left| {1 \over n} \sum_{i=1}^n \xi_{ij} \right| \ge u^\diamond  \right) \le \gamma/12.
		\end{align*}
		For VII, notice that  
		\begin{align*}
		&\Prob \left(\max_{n-1 \ge s \ge m} \max_{1 \le j \le p} \left| {1 \over n} \sum_{i=s+1}^n \xi_{ij} \right| \ge 2 \E \left[ \max_{n-1 \ge s \ge m} \max_{1 \le j \le p} \left| {1 \over n} \sum_{i=s+1}^n \xi_{ij} \right| \right] + u\right) \\
		& \qquad \le \exp \left( -{u^2 \over 3 \tau_{3}^2} \right) + 3 \exp \left( -{u \over K_1 \|M_{3}\|_{\psi_1}}\right),
		\end{align*}
		where
		\begin{align*}
		&\tau_{3}^2 = \max_{n-1 \ge s \ge m} \max_{1 \le j \le p} \sum_{i=s+1}^{n} {1 \over n^2} \E(\xi_{ij}^2) \le \bar{b} n^{-2}(n-m) \le \bar{b} n^{-1}, \\
		&M_{3} =  \max_{n-1 \ge s \ge m} \max_{1 \le i \le n} \max_{1 \le j \le p} |{1 \over n}\xi_{ij}| = M_{2}, \\
		& \E \left[\max_{n-1 \ge s \ge m} \max_{1 \le j \le p} \left| {1 \over n} \sum_{i=s+1}^n \xi_{ij} \right|\right] \le C_3(\bar{b}) \left\{ n^{-1/2} \log^{1/2}(np) + n^{-1} \log^2(np) \right\}.
		\end{align*}
		Then it follows that 
		\begin{align*}
		VII \le \Prob  \left( \max_{n-1 \ge s \ge m} \max_{1 \le j \le p} \left| {1 \over n} \sum_{i=s+1}^n \xi_{ij} \right| \ge u^\diamond \right) \le \gamma/12.
		\end{align*}
		Now combining these estimates into (\ref{eqn:rate_theta=0_V}), (\ref{eqn:rate_theta=0_VI}), and (\ref{eqn:rate_theta=0_VII}), we conclude that $I \le \gamma / 2$ holds under the assumption (\ref{eqn:reduction_min_signal_delta}) and choosing a large enough constant $C(\bar{b}, K, c_1, c_2) > 0$ in the definition of $r$. Same bound holds for $II$ and (\ref{eqn:rate_cp_estimation_theta=0_subsexp}) follows.
		
		\underline{\it{Part (ii)}}. Suppose (D) holds. To bound V, applying Lemma \ref{lem:maximal_subexponential}, we have
		\begin{align*}
		& \Prob \left( \max_{r \le s' \le n-m}  \max_{1 \le j \le p}  \left|  \sum_{i=1}^{s'} {1 \over s'} \xi_{ij} \right| \ge 2 \E\left[  \max_{r \le s' \le n-m} \max_{1 \le j \le p} \left| \sum_{i=1}^{s'} {1 \over s'} \xi_{ij} \right| \right] + u \right)\\
		& \qquad \le \exp \left( -{u^2 \over 3 \tau_1^2} \right) + C(q)  {\E [M_1^q] \over u^q}
		\end{align*}
		holds for all $u > 0$. Note that $\tau_1^2 \le r^{-1} \bar{b} $, and for $q \ge 2$ we have $\|M_1\|_2 \le \|M_1\|_q$ with
		\begin{align*}
		\|M_1\|_q^q = \E \left[ \max_{1 \le i \le n} \max_{1 \le j \le p} (\max_{s' \ge r} {1 \over s'}) |\xi_{ij}|^q \right] \le r^{-q} \sum_{i=1}^n \E \left[  \max_{1 \le j \le p} |\xi_{ij}|^q \right] \le \bar{b}^q r^{-q} n.
		\end{align*}
		Thus, 
		\begin{align*}
		\E\left[  \max_{r \le s' \le n-m} \max_{1 \le j \le p} \left| \sum_{i=1}^{s'} {1 \over s'} \xi_{ij} \right| \right] \le & K_5 \left\{ \sqrt{\log(np)} \tau_1 + \log(np) \|M_1\|_2 \right\} \\
		\le & C_4(\bar{b},q) \left\{  r^{-1/2} \log^{1/2}(np) + r^{-1} n^{1/q} \log(np) \right\}.
		\end{align*}
		Let 
		$$
		u^* = C^*(\bar{b}, K, q) r^{-1/2} \log^{1/2}(np) \max \{ 1, \gamma^{-1/q} n^{1/q} \log^{1/2}(np) \}. 
		$$
		Then, we have 
		\begin{align*}
		V &\le \Prob \left( \max_{r \le s' \le n-m}  \max_{1 \le j \le p}  \left|  \sum_{i=1}^{s'} {1 \over s'} \xi_{ij} \right| \ge u^* \right) \le \gamma/12.
		\end{align*}
		To bound VI, note that
		\begin{align*}
		\Prob\left( \max_{1 \le j \le p}  \left| {1 \over n} \sum_{i=1}^n \xi_{ij} \right| \ge  2 \E \left[\max_{1 \le j \le p}  \left| {1 \over n} \sum_{i=1}^n \xi_{ij} \right| \right] + t \right) \le  \exp \left( -{t^2 \over 3 \tau_{2}^2} \right) + C(q)  {\E [M_2^q] \over t^q},
		\end{align*}
		where $\tau_{2}^2$ and $M_{2}$ are defined the same as in in Part (i). Since	$\tau_{2}^2 \le n^{-1} \bar{b}$ and
		\begin{align*}
		&\|M_2\|_q^q = {1 \over n^q} \E \left[ \max_{1 \le i \le n} \max_{1 \le j \le p} |\xi_{ij}|^q \right] \le {1 \over n^q} \sum_{i=1}^n \E \left[  \max_{1 \le j \le p} |\xi_{ij}|^q \right] \le \bar{b}^q n^{1-q},\\
		& \E \left[ \max_{1 \le j \le p}  \left| {1 \over n} \sum_{i=1}^n \xi_{ij} \right| \right] \le K_6 \left\{ \sqrt{\log(np)} \tau_2 + \log(np) \|M_2\|_2 \right\} \\
		&\qquad \qquad \qquad \qquad\le C_5(\bar{b},q) \left\{ n^{-1/2} \log^{1/2}(np) + n^{1/q-1} \log(np) \right\} .
		\end{align*}
		As $\log(\gamma^{-1}) \le K \log(np)$, we can take 
		\begin{align*}
		u^\diamond &= C^\diamond(\bar{b},K,q) n^{-1/2} \log^{1/2}(np) \max \{ 1, \gamma^{-1/q} n^{1/q-1/2} \log^{1/2}(np) \} 
		\end{align*}
		so that 
		\begin{align*}
		VI \le 	\Prob\left( \max_{1 \le j \le p}  \left| \sum_{i=1}^n \xi_{ij} \right| \ge u^\diamond  \right) \le \gamma/12.
		\end{align*} 
		For VII, notice that
		\begin{align*}
		&\Prob \left(\max_{s \ge m} \max_{1 \le j \le p} \left| {1 \over n} \sum_{i=s+1}^n \xi_{ij} \right| \ge 2 \E \left[ \max_{s \ge m} \max_{1 \le j \le p} \left| {1 \over n} \sum_{i=s+1}^n \xi_{ij} \right| \right] + t\right) \\
		& \qquad \le \exp \left( -{t^2 \over 3 \tau_{3}^2} \right) + C(q) {\E [M_3^q] \over t^q},
		\end{align*}
		where $\tau_{3}^2 \le \bar{b} n^{-1}$ and $M_{3}$ are defined the same as in Part (i). 
		Then we have 
		\begin{align*}
		VII \le \Prob  \left( \max_{s \ge m} \max_{1 \le j \le p} \left| {1 \over n} \sum_{i=s+1}^n \xi_{ij} \right| \ge t^\diamond \right) \le \gamma/12.
		\end{align*}
		Hence, $I \le \gamma / 2$ under the assumption (\ref{eqn:reduction_min_signal_delta_poly}), and choosing a large enough constant $C(\bar{b},K,q,c_1,c_2) > 0$ in the definition of $r$. By a similar argument, $II$ obeys the same bound as $I$.
	\end{proof}
	
	\begin{lem}
		\label{lem:prob_event_G}
		Suppose that (B) holds and $H_1$ is true with a change point $m$ satisfying $c_1 \le t_m \le c_2$ for some constants $c_1,c_2 \in (0,1)$. Suppose that $\log^3(np) \le K n$ and $\log(\gamma^{-1}) \le K\log(np)$ for some constant $K > 0$. Let $\mathcal{G}$ be defined in (\ref{eqn:event_G}). 
		(i) If (C) and (\ref{eqn:reduction_min_signal_delta}) hold, then $\Prob ( \mathcal{G} ) \ge 1- \gamma/18$, where  $r = C(\bar{b}, K, c_1, c_2) \underline{\delta}_n^{-2} \log^4(np)$.
		(ii) If (D) and (\ref{eqn:reduction_min_signal_delta_poly}) hold, then $\Prob ( \mathcal{G} ) \ge 1- \gamma/18$, where $r = C(\bar{b}, K, q, c_1, c_2) \underline{\delta}_n^{-2} \log(np) \max\{1, \gamma^{-2/q} n^{2/q} \log(np)\}$.
		In addition, if $\delta_{nj} \le 0$ for all $1 \le j \le p$, then $\Prob \left( \max_{1 \le j \le p} \left| \tilde{Z}_{nj}(m) \right| = \tilde{Z}_{nj^*}(m) \right) \ge 1 - \gamma/36$ in both (i) and (ii), where $j^* \in \{1,\dots,p\}$ is defined as $\tilde{Z}_{nj^*}(m) = \max_{1 \le j \le p} \tilde{Z}_{nj}(m)$.
	\end{lem}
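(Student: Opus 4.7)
The plan is to reduce both claims to a uniform noise bound on
\[
U_{nj}(s) := \tilde Z_{nj}(s) - \E[\tilde Z_{nj}(s)] = \sum_{i=1}^{s} \xi_{ij} - \frac{s}{n}\sum_{i=1}^{n} \xi_{ij},
\]
combined with the explicit mean formula $\E[\tilde Z_{nj}(s)] = -m(n-s)\delta_{nj}/n$ valid for $s\ge m$. Writing $U_{nj}(s) = \sum_i b_{is}\xi_{ij}$ with $|b_{is}|\le 1$ and $\sum_i b_{is}^2 = s(n-s)/n \le n/4$, I would apply Lemma~\ref{lem:maximal_subexponential} (with $\sigma^2\le \bar b\,n/4$, and $\|M\|_{\psi_1}\lesssim \bar b\log(np)$ under (C), $\|M\|_q\lesssim \bar b\,n^{1/q}$ under (D)) together with \cite[Lemma E.1]{cck2016a} for the expected-supremum term, to obtain on an event $E$ with $\Prob(E) \ge 1-\gamma/36$ the bound $\tilde M := \max_{s,j}|U_{nj}(s)| \le C\sqrt{n}\,\log^{3/2}(np)$ under (C), and the analogous polynomial bound $\tilde M \le C\bigl(\sqrt{n\log(np)} + n^{1/q}\log(np) + \gamma^{-1/q}\,n^{1/q}\bigr)$ under (D); the hypotheses $\log^3(np)\le Kn$ and $\log(\gamma^{-1})\le K\log(np)$ are used to simplify.

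Next I would pick $j_0\in\calS$ with $|\delta_{nj_0}|=\underline{\delta}_n$ and $s_0 = m+r$. Under (\ref{eqn:reduction_min_signal_delta}) (respectively (\ref{eqn:reduction_min_signal_delta_poly})), choosing the constant $C$ in the definition of $r$ appropriately forces $r\le (1-c_2)n/4$, so $|\E[\tilde Z_{nj_0}(s_0)]| = m(n-s_0)\underline{\delta}_n/n \ge c_1(1-c_2)\,n\,\underline{\delta}_n/2$; the same reductions are calibrated so that this lower bound exceeds $4\tilde M$ on $E$. On $E$, for every $j\notin\calS$ and every $s$ one has $|\tilde Z_{nj}(s)| = |U_{nj}(s)| \le \tilde M$ (since $\delta_{nj}=0$), while $|\tilde Z_{nj_0}(s_0)| \ge |\E[\tilde Z_{nj_0}(s_0)]| - \tilde M > \tilde M$. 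Hence $\max_{s\ge m+r}\max_{j\notin\calS}|\tilde Z_{nj}(s)| < |\tilde Z_{nj_0}(s_0)| \le \max_{s\ge m+r}\max_{j\in\calS}|\tilde Z_{nj}(s)|$, which is exactly $\mathcal G$, giving $\Prob(\mathcal G) \ge \Prob(E) \ge 1-\gamma/18$.

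For the second statement, at $s=m$ the sign assumption $\delta_{nj}\le 0$ forces $\E[\tilde Z_{nj}(m)] = m(n-m)|\delta_{nj}|/n \ge 0$ for every $j$; hence on $E$, $\min_j \tilde Z_{nj}(m)\ge -\tilde M$, while $\max_j\tilde Z_{nj}(m)\ge c_1(1-c_2)\,n\,\underline{\delta}_n - \tilde M > \tilde M \ge -\min_j \tilde Z_{nj}(m)$, so the unsigned maximum is attained with a positive sign and coincides with $\tilde Z_{nj^*}(m)$, with probability at least $1-\gamma/36$. The main obstacle is the coupled bookkeeping: the constant $C$ in $r$ must be matched with the constants implicit in the reductions (\ref{eqn:reduction_min_signal_delta})/(\ref{eqn:reduction_min_signal_delta_poly}) so that \emph{both} $r \le (1-c_2)n/4$ and $|\E[\tilde Z_{nj_0}(s_0)]| \ge 4\tilde M$ hold simultaneously. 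Since both requirements reduce to the same $\underline{\delta}_n$-threshold up to constants, this is routine; all the genuine probabilistic content is concentrated in the Talagrand bound of the first step.
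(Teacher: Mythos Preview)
Your proposal is correct and follows essentially the same route as the paper: both arguments write $\tilde Z_{nj}(s)-\E[\tilde Z_{nj}(s)]=\sum_i v_{is}\xi_{ij}$ with $\sum_i v_{is}^2\le n/4$, invoke Lemma~\ref{lem:maximal_subexponential} together with the expectation bound from \cite{cck2016a} to control the noise uniformly in $(s,j)$, and then compare the resulting threshold against the signal $\E[\tilde Z_{nj}(s)]=-m(n-s)\delta_{nj}/n$. Two cosmetic differences: the paper obtains the sharper threshold $t^\dagger=C\sqrt{n\log(np)}$ under (C) (your $C\sqrt{n}\log^{3/2}(np)$ is valid but looser) and compares against $|\delta_n|_\infty$ rather than $\underline{\delta}_n$, but neither affects the conclusion since (\ref{eqn:reduction_min_signal_delta}) gives enough margin for both.
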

	
	\subsection{Proof of Theorem \ref{thm:binseg_consistency_mcp}}
	
	\begin{proof}[of Theorem \ref{thm:binseg_consistency_mcp}]
		\underline{\it{Part (i)}} Assuming (C).
		Suppose there are undetected change-points in $[b,e]$ which satisfies
		\[
		m_{k_0} \le b < m_{k_0+1} < \dots < m_{k_0+q} < e \le m_{k_0+q+1}  
		\]
		for $0 \le k_0 \le \nu -q$.  
		Let $\lambda_1 = \lambda_2 = \bar{b} \log^2 (np)$,
		\[
		\calA_n = \left\{ | (e-b+1)^{-1/2}  \sum_{i=b}^e \xi_i|_\infty < K_2 \lambda_2 , \forall 1 \le b \le e \le n\right\}
		\]
		and
		\[
		\calB_n = \left\{ \max_{1 \le b \le s \le e \le n} |Z_{n,b,e}(s) - EZ_{n,b,e}(s)|_\infty < K_1 \lambda_1 \right\}.
		\]
		By Lemma \ref{lem:randomness_ctrl}, $\Prob (\calA_n \cap \calB_n) \ge 1- 2\gamma$. In the following, we will only consider the case of $\calA_n \cap \calB_n$ where the random component of $\{X_i\}_{i=1}^n$, namely the behavior of $\{\xi_i\}_{i=1}^n$, is well characterized.
		
		Let $\hat{m}_{b,e} = \argmax_{b \le s \le e} |Z_{n,b,e}(s)|_\infty $ be the location where the maximum CUSUM statistic within interval $[b,e]$ is reached and let $h = \argmax_{j=1, \dots, p} |Z_{n,b,e}(\hat{m}_{b,e})|_j$ be its dimension. According to Algorithm \ref{alg:multi_cp}, if $\hat{m}_{b,e}$ is large enough to pass the bootstrap test and it is close to one of $\{ m_{k_0+1}, \dots, m_{k_0+q} \}$, then one change point is consistently identified. The remaining proof follows the structure of proof in Theorem 1 in \cite{fryzlewicz2014} to complete this claim with modifications on the key steps: i) $\hat{m}_{b,e} \in \calS_n$; ii) Power of bootstrap test is guaranteed such that $\Prob (\calS_n)$ is bounded below.
		Note that, the main differences between the proof of Theorem 1 in \cite{fryzlewicz2014} and our argument are in the following aspects.
		First, \cite{fryzlewicz2014} considers one-dimensional observations while we extend it to $p$-dimensional case. The set $\calA_n$ and $\calB_n$ are adapted to sub-exponential random vectors.
		Second, Lemma A.5 in \cite{fryzlewicz2014} provides stopping conditions when the following (\ref{eqn:m_r_within_range}) and (\ref{eqn:boundary_cut}) fail, that is the search stops if (i) no change point in $[b,e]$ or (ii) the only 1 or 2 change points left in $[b,e]$ are within the distance of $\us$ from $b$ or $e$ (therefore they should be classified as the same change point as $b$ or $e$). But under our context, testing procedure substitutes subjective pick of threshold so that we do not need to control the magnitude of $ |Z_{n,b,e}(s)|_\infty$. 
		
		Apply Theorem 1 in \cite{fryzlewicz2014}, we can make the following statements on set $\calB_n \cap \calA_n$. 
		When 
		\begin{equation}
		\label{eqn:m_r_within_range}
		b < m_{k_0+r} - C_3 D_\nu < m_{k_0+r} + C_3 D_\nu <e \text{ for some } 1 \le r \le q
		\end{equation}
		and
		\begin{equation}
		\label{eqn:boundary_cut}
		\max \left\{ \min \{m_{k_0 + 1} - b, b - m_{k_0} \} , \min \{m_{k_0 + q + 1} -e, e - m_{k_0+ q} \} \right\} \le C_4 \epsilon_n
		\end{equation}
		hold for some constants $C_3, C_4$, by Lemma A.3 in \cite{fryzlewicz2014},
		$$
		| \hat{m}_{b,e} - m_{k_0+r}| \le C_5 \epsilon_n' = C_5 \lambda_2^2 n^2 D_\nu^{-2} (\delta^{(p_0+r)}_h)^{-2} 
		$$
		i.e. $\hat{m}_{b,e}$ falls within the distance of $C_5 \bar{b}^2 \epsilon_n$ of a previously undetected change point $m_{k_0+r}$.
		
		Note that $h \in \calD_{k_0+r}$, i.e. $\delta^{(k_0+r)}_h \neq 0$. Otherwise, if the $h^{th}$-dimension has a mean-shift within the working interval $[b,e]$, then it contradicts with $|Z_{n,b,e}(s)|_\infty = |Z_{n,b,e}(\hat{m}_{b,e})|_h$. If there is no mean-shift in the $h^{th}$-dimension within $[b,e]$, then $|Z_{n,b,e}( \hat{m}_{b,e} )|_h \le  C_6(\bar{b}) \log^2(np)$.
		But by Lemma \ref{lem:max_delta_s_location},
		\begin{align*}
		\max_{b + \us \le s \le e-\us} |E Z_{n,b,e}(s)|_\infty &\ge \max_{l=k_0, \dots, k_0+q+1} {C D_\nu^2 \bar\delta_n \over \sqrt{(e-b) (m_{l}-b) (e-m_{l})}}\\ 
		&\ge {C D_\nu^2 \udelta_n \over \sqrt{n D_\nu (n-D_\nu)}} \\
		&\ge C_7(\bar{b}) D_{\nu}^{3/2} n^{-1} \udelta_n
		\end{align*}
		The second line is due to $D_{\nu} \ge \epsilon_n \gtrsim b - m_{k_0} \ge m_{k_0+1} - m_{k_0} \ge D_{\nu}$ by (\ref{eqn:boundary_cut}) and Assumption a), b), e). 
		It leads to contradiction. When $\delta^{(p_0+r)}_h = 0$ but $\delta^{(p_0+r+1)}_h \neq 0$, we just need to record $r+1$ as $r$. 
		
		Next, according to Lemma \ref{lem:power_mcp}, in order to have type II error (of performed test in current interval $[b,e]$) is less than $\gamma +2\zeta + C_2 \varpi_{1,(e-b)}$, we only need to show $|\tilde{\Delta}|_\infty = |\E Z_{n,b,e}(s)|_\infty \ge C \nu \log^{1/2}((e-b)p)$ for some $s \in [b+\us,e-\us]$, $C_1 = C_1 (\ub, \bar{b}, K)$ and $C_2 = C_2 (\ub, \bar{b}, K)$. Here, $\varpi_{1,(e-b)} = (\log^7 ((e-b)p) /\us)^{1/6}$ with $\us$ determined by $n$, not $(e-b)$.
		Then by Assumption d), 
		\begin{align*}
		\max_{b + \us \le s \le e-\us} |E Z_{n,b,e}(s)|_\infty \ge C_7 D_{\nu}^{3/2} n^{-1} \udelta_n &\ge C_7 C^2 n^{{3 \over 2}\Theta-1-\omega}\\ 
		&> C_8(\bar{b},\ub,K) \sqrt{\log(\zeta^{-1}) \log(np) + \nu^2 \log(np/\alpha)} ,
		\end{align*}
		
		which indicates the change point is significant to pass the bootstrap test with probability no less than $1-\gamma -2\zeta - C_0^{'} \varpi_{1,n}$ where $\varpi_{1,n} \ge \varpi_{1,(e-b)}$ by Assumption e).
		
		As a consequence, the procedure then moves on to operate on the intervals $[b, \hat{m}_{b,e}]$ and $[\hat{m}_{b,e}, e]$ where both (\ref{eqn:m_r_within_range}) and (\ref{eqn:boundary_cut}) still hold. Therefore, all change points will be detected one by one until the conditions in Lemma A.5 of \cite{fryzlewicz2014} are met. Assumption c) is implicitly called in Theorem 1 of \cite{fryzlewicz2014}.

		\underline{\it{Part (ii)}} Assuming (D).
		Let $\lambda_1 = \lambda_2 = C(q) \bar{b} n^{3/q} (\log(np) + \gamma^{-1/q})$. Similarly, Lemma \ref{lem:randomness_ctrl} shows $\Prob (\calA_n \cap \calB_n) \ge 1- 2\gamma$.
		According to the proof of Theorem \ref{thm:power_bootstrap_cusum_test}, Type-II error is less than $\gamma +2\zeta + C_0^{'} ( \varpi_{1,(e-b)} + \varpi_{2,(e-b)})$ when $|\tilde{\Delta}|_\infty = |\E Z_{n,b,e}(s)|_\infty \ge C \log((e-b)p)$ for some $s \in [b+\us,e-\us]$. Using the same arguments, we conclude that for each step in binary segmentation the bootstrap test is passed with probability no less than $1-\gamma -2\zeta - C_0^{'} (\varpi_{1,n} + \varpi_{2,n})$ and estimated location $\hat{m}_{b,e}$ falls within distance of $C\epsilon_n = C(\bar{b},q) n^{2+6/q} D_\nu^{-2} (\udelta_n)^{-2} (\log^2(np) + \gamma^{-2/q})$ until the stopping conditions are met.
	\end{proof}
	
	\begin{lem}
		\label{lem:randomness_ctrl}
		$\Prob (\calA_n) \ge 1- \gamma$ and $\Prob (\calB_n) \ge 1- \gamma$ for $\gamma$ defined the same as Theorem \ref{thm:binseg_consistency_mcp}.
	\end{lem}

	\begin{proof}[of Lemma \ref{lem:power_mcp}]
		The structure of this proof is similar to the one for Theorem \ref{thm:power_bootstrap_cusum_test}.
		Without less of generality we may assume $\mu=0$. For $\xi_ i = X_i - \mu_i$ where $\mu_i = \E[X_i]$ has the form of 
		\[
		\mu_{m_k + 1} = \dots = \mu_{m_{k+1}} = \sum_{l=0}^k \delta^{(l)},
		\]
		the CUSUM statistic computed on $X_1,\dots,X_n$ can be decomposed as $Z_n(s) = Z_n^\xi(s) + \Delta_s$, where $	Z_n^\xi(s) = \sqrt{s(n-s) \over n} \left\{{1\over s} \sum_{i=1}^s \xi_i - {1 \over n-s} \sum_{i=s+1}^n \xi_i \right\}$ is defined the same as in the proof of Theorem \ref{thm:power_bootstrap_cusum_test} but $\Delta_s = \E Z_{n}(s)$ is extended to multiple change point model as in (\ref{eqn:Delta_s_mcp}). Again, by Lemma \ref{lem:max_delta_s_location}, $|\Delta_s|_\infty$ reaches its maximum at one of the change points, i.e.,
		\begin{equation}
		\label{eqn:max_signal_lowerbd_mcp}
		\max_{\us \le s \le n - \us} |\Delta_s|_\infty = \max_{k = 1, \dots, \nu} |\Delta_{m_k}|_\infty \gtrsim \max_{l=1, \dots, \nu} {C D_\nu^2 \bar\delta_n \over \sqrt{n m_{l} (n-m_{l})}} =: \tilde\Delta.
		\end{equation}
		The type II error obeys the same bound in arguments of (\ref{eqn:type_II_error_lower_bound}) and (\ref{eqn:type_II_error_bound}). 
		
		\underline{\it{Part (i).}} Assume (C). Take $\beta_n = C_1 \varpi_{1,n} + 2 \zeta$, $q_{\tilde{T}_n}(1-\beta_n)$ holds for the same bound in (\ref{eqn:type_II_error_tilde_T_n}) . Note that $q_{T_{n}^{*} \mid X_{1}^{n}}(1-\alpha)$ is bounded the same as in (\ref{eqn:type_II_error_T_n*}), but $\psi$ changes to
		\begin{equation}
		\label{eqn:bound_bar_psi_mcp}
		\bar\psi^2 \le 2 \max_{\us \le s \le n-\us} \max_{1 \le j \le p} \left\{ {n-s \over n} \hat{S}^{\xi,-}_{n,s,jj} + {s \over n} \hat{S}^{\xi,+}_{n,s,jj} \right\} + 4 \nu^2 \bar\delta_n^2.
		\end{equation}
		The sketch of proof for (\ref{eqn:bound_bar_psi_mcp}) is shown in Part (iii).
		Therefore, based on the same probability bounds of $\max_{\us \le s \le n-\us} \max_{1 \le j \le p} \left| {1 \over s} \sum_{i=1}^s (\xi_{ij}^2  - \Sigma_{jj}) \right|$ and $\max_{\us \le s \le n-\us} \max_{1 \le j \le p} \left| \bar{\xi}_{sj}^- \right|^2$, we deduce that
		\[
		\Prob \left( q_{T_{n}^{*} \mid X_{1}^{n}}(1-\alpha) \ge C_5 \max\{\nu \bar\delta_n, 1\} \log^{1/2}(np/\alpha) \right) \le \gamma.
		\]
		Therefore, (\ref{eqn:power_lower_bound_subexp}) follows.
		
		\underline{\it{Part (ii).}} Assume (D). Arguments are exactly the same as Part (ii) in Theorem \ref{thm:power_bootstrap_cusum_test}.
		
		\underline{\it{Part (iii).}} The result of (\ref{eqn:bound_bar_psi_mcp}) comes from the proof of Lemma \ref{lem:bound_bar_psi} with a modification to multiple mean-shifts model (\ref{eqn:model_mcp}).
		Recall that $\bar{X}_s^- = s^{-1} \sum_{i=1}^s X_i$, $\bar{X}_s^+ = (n-s)^{-1} \sum_{i=s+1}^n X_i$, and $\bar{\xi}_s^-, \bar{\xi}_s^+, \bar{\mu}_s^-, \bar{\mu}_s^+$ are similarly defined by replacing $X_1^n$ with $\xi_1^n$ and $\mu_1^n$, respectively.
		Then, elementary calculations yield $\bar{X}_s^- = \bar{\xi}_s^- + \bar{\mu}_s^-$ and $\bar{X}_s^+ = \bar{\xi}_s^+ + \bar{\mu}_s^+ $, where
		\[
		\bar{\mu}_s^- = {1 \over s} \sum_{l=0}^k (s-m_l) \delta^{(l)} \text{  and  } \bar{\mu}_s^+ = \sum_{l=0}^k \delta^{(l)} + {1 \over n-s} \sum_{l=k+1}^{\nu} (n-m_l) \delta^{(l)} \text{  for  } m_k < s \le m_{k+1}.
		\]
		Note that 
		\[
		\left\{
		\begin{array}{l}
		{1 \over \sqrt{s}} \sum_{i=1}^s e_i (X_i - \bar{X}_s^-) = \underbrace{ {1 \over \sqrt{s}} \sum_{i=1}^s e_i (\xi_i - \bar{\xi}_s^-)}_{:=A} + {1 \over \sqrt{s}} \sum_{i=1}^s e_i (\mu_i - \bar{\mu}_s^-) \\
		{1\over \sqrt{n-s}} \sum_{i=s+1}^n e_i (X_i - \bar{X}_s^+) = \underbrace{ {1\over \sqrt{n-s}}  \sum_{i=s+1}^n e_i (\xi_i - \bar{\xi}_s^+)}_{:=B} + {1\over \sqrt{n-s}} \sum_{i=s+1}^n e_i (\mu_i - \bar{\mu}_s^+) \\
		\end{array} \right. ,
		\]
		where
		\[ 
		\left\{
		\begin{array}{lllll}
		\sum_{i=1}^s (\mu_{ij} - \bar{\mu}_{sj}^-)^2  &=\sum_{i=1}^s \mu_{ij}^2 -s (\bar{\mu}_{sj}^-)^2 &\le  \sum_{i=1}^s \mu_{ij}^2 &\le& s \nu^2 \bar\delta_n^2  \\
		\sum_{i=s+1}^n  (\mu_{ij} - \bar{\mu}_{sj}^+)^2  &=\sum_{i=s+1}^n \mu_{ij}^2 - (n-s) (\bar{\mu}_{sj}^+)^2 &\le \sum_{i=s+1}^n \mu_{ij}^2 &\le& (n-s) \nu^2 \bar\delta_n^2   \\
		\end{array} \right. 
		\]
		as $|\mu_s|_\infty = |\sum_{l=0}^k \delta^{(l)}|_\infty \le \nu \bar\delta_n$ for $s = m_{k}+1 , \dots, m_{k+1}$.
		Similarly, we have
		\begin{eqnarray*}
			\bar\psi^2 &=& \max_{\us \le s \le n-\us} \max_{1 \le j \le p} \Big\{ {n-s \over n} \Cov_e \big( A_j + {1 \over \sqrt{s}} \sum_{i=1}^s  e_i (\mu_{ij} - \bar{\mu}_{sj}^-) \big) \\
			&& \qquad \qquad \qquad + {s \over n}  \Cov_e \big( B_j + {1 \over \sqrt{n-s}} \sum_{i=s+1}^n e_i (\mu_{ij} - \bar{\mu}_{sj}^+) \big) \Big\} \\
			&\le& 2 \max_{\us \le s \le n-\us} \max_{1 \le j \le p} \Big\{ {n-s \over n} \Cov_e(A_j) + {s \over n}  \Cov_e( B_j) \Big\} \\
			&& + 2 \max_{\us \le s \le n-\us} \max_{1 \le j \le p} \left[ {1 \over s} \sum_{i=1}^s  (\mu_{ij} - \bar{\mu}_{sj}^-)^2 +  {1 \over n - s} \sum_{i=s+1}^n  (\mu_{ij} - \bar{\mu}_{sj}^+)^2\right] .
		\end{eqnarray*}
		Then (\ref{eqn:bound_bar_psi_mcp}) is immediate.
	\end{proof}
	
	\begin{lem}
		\label{lem:max_delta_s_location}
		$\argmax_{s=1, \dots, n-1} |\E Z_{n}(s)|_\infty \subset \{m_k, k = 1, \dots, \nu\}$. Consequently, we automatically have $\argmax_{s=b, \dots, e} |\E Z_{n,b,e}(s)|_\infty \subset \{m_k, k = 1, \dots, \nu\} \cap [b,e]$. Moreover, there exist some constant $C$ such that
		$$
		\max_{l=1, \dots, \nu} |\Delta_{m_l}|_\infty \ge \max_{l=1, \dots, \nu} {C D_\nu^2 \bar\delta_n \over \sqrt{n m_{l} (n-m_{l})}} \ge \max_{l=1, \dots, \nu} C n^{-5/2}  \sqrt{m_{l} (n-m_{l})} D_\nu^2 \bar\delta_n.
		$$
	\end{lem}

	\section{Proof of auxiliary lemmas}
	
	This section contains auxiliary lemmas for the supplementary material Section~\ref{sec:proofs}. 
	
	\begin{proof}[of Lemma \ref{lem:maximal_subexponential}]
		For $i=1,\dots,n$, let 
		\[
		Y_{i} = \left( \begin{array}{ccc}
		a_{1i} X_{i1} & \dots & a_{1i} X_{ip} \\
		\vdots & \ddots & \vdots \\
		a_{ni} X_{i1} & \dots & a_{ni} X_{ip} \\
		\end{array} \right).
		\]
		Then $Y_1,\dots,Y_n$ is a sequence of independent mean-zero random matrices in $\R^{n \times p}$. Note that $W_n = \sum_{i=1}^n Y_i$ and $Z_n = |W_n|_\infty$. Then (\ref{eqn:maximal_subexponential}) is an immediate consequence of Lemma E.3 in \cite{cck2016a}.
	\end{proof}
	
	\begin{proof}[of Lemma \ref{lem:hatDelta}]
		\underline{\it{Part (i)}}. Assume (C). Write  
		$$
		\hat\Delta_1 = \max_{\us \le s \le n-\us}  \max_{1 \le j,k \le p} \left |\sum_{i=1}^n b_{is} (X_{ij} X_{ik} - \sigma_{jk})  \right|,
		$$
		where
		$$
		b_{is} = \left\{
		\begin{array}{cc}
		s^{-1}, & \text{if } 1 \le i \le s \\
		0, & \text{if } s+1 \le i \le n \\
		\end{array} \right. .
		$$
		By Part (i) of Lemma \ref{lem:maximal_subexponential}, there exists a universal constant $K_1 > 0$ such that for all $t > 0$, 
		$$
		\Prob(\hat\Delta_1 \ge 2 \E [\hat\Delta_1] + t) \le \exp\left(-{t^2 \over 3\tau^2} \right) + 3 \exp \left[-\left({t \over K_1 \|M\|_{\psi_{1/2}}}\right)^{1/2} \right],
		$$
		where 
		\begin{eqnarray*}
			\tau^2 &=& \max_{\us \le s \le n-\us} \max_{1 \le j,k \le p} \sum_{i=1}^n b_{is}^2 \E(X_{ij} X_{ik} - \sigma_{jk})^2, \\
			M &=& \max_{\us \le s \le n-\us} \max_{1 \le j,k \le p} \max_{1 \le i \le s} | s^{-1} (X_{ij} X_{ik} - \sigma_{jk}) |.
		\end{eqnarray*}
		Note that 
		\begin{align*}
		\|M\|_2 \le K_2 \|M\|_{\psi_{1/2}} \le & {K_2 \over \us} \| \max_{1 \le i \le n}  \max_{1 \le j,k \le p} |X_{ij} X_{ik}| \|_{\psi_{1/2}} \le {K_2 \over \us} \|\max_{1 \le i \le n} \max_{1 \le j \le p} X_{ij}^2 \|_{\psi_{1/2}} \\
		= & {K_2 \over \us} \|\max_{1 \le i \le n} \max_{1 \le j \le p} |X_{ij}| \|_{\psi_1} \le {K_2 \over \us} \bar{b}^2 \log^2(np).
		\end{align*}
		By the Cauchy-Schwarz inequality and assumption (B), we have 
		$$
		\tau^2 \le \max_{\us \le s \le n-\us} \max_{1 \le j,k \le p} \sum_{i=1}^s {1 \over s^2}\E(X_{ij} X_{ik})^2 \le \bar{b}^2 \us^{-1}.
		$$
		By Lemma E.1 in \cite{cck2016a}, there exists a universal constant $K_3 > 0$ such that
		\begin{eqnarray*}
			\E[\hat\Delta_1] &\le& K_3 \left\{ \tau \log^{1/2}(np^2) + \|M\|_2 \log(np^2) \right\} \\
			&\le& K_3  \left\{ \bar{b} \log^{1/2}(np) \us^{-1/2} + \bar{b}^2 \log^3(np) \us^{-1} \right\}.
		\end{eqnarray*}
		Therefore, we get
		\begin{align*}
		& \Prob\left(\hat\Delta_1 \ge 2 K_3  \left\{ \bar{b} \log^{1/2}(np) \us^{-1/2} + \bar{b}^2 \log^3(np) \us^{-1} \right\} + t\right) \\
		& \qquad \qquad \le \exp\left(-{ t^2 \us \over 3 \bar{b}^2}\right) + 3 \exp\left(- {t^{1/2} \us^{1/2} \over K_4 \bar{b} \log(np)} \right).
		\end{align*}
		Choose
		$t = C_1  \us^{-1/2} \log^{1/2}(np)$
		for some large enough constant $C_1 := C_1(\bar{b}, K) \ge 1$. Using $\log(\gamma^{-1}) \le K \log(np)$ and $\log^5(np) \le \us$, we obtain that 
		\begin{equation}
		\label{eqn:hat_Delta_1_subexp}
		\Prob\left(\hat\Delta_1 \ge C  \us^{-1/2} \log^{3/2}(np) \right) \le \gamma / 4.
		\end{equation}
		Since $X_1,\dots,X_n$ are i.i.d. under $H_0$, $\hat\Delta_1$ and $\hat\Delta_2$ share the same distribution and therefore $\hat\Delta_2$ also obeys the bound (\ref{eqn:hat_Delta_1_subexp}). $\hat\Delta_3$ and $\hat\Delta_4$ can be dealt similarly. Indeed, by Lemma \ref{lem:maximal_subexponential}, there exists a universal constant $K_5 > 0$ such that for all $t > 0$, 
		$$
		\Prob(\hat\Delta_3 \ge 2 \E [\max_{\us \le s \le n-\us}  |\sum_{i=1}^n b_{is}X_{i}|_\infty \ ] + t) \le \exp\left(-{t^2 \over 3 \tilde\tau^2} \right) + 3 \exp \left(-{t \over K_5 \|\tilde M\|_{\psi_1}} \right),
		$$
		where $\tilde\tau^2 = \max_{\us \le s \le n-\us, 1 \le j \le p} \sum_{i=1}^n b_{is}^2 \E X_{ij}^2$ and $\tilde{M} = \max_{ 1 \le i \le n, \us \le s \le n-\us, 1 \le j \le p }   |b_{is}X_{ij}|$. By (B), $\tilde\tau^2 \le \bar{b} \us^{-1}$. By (C) and Lemma 2.2.2 in \cite{vandervaartwellner1996}, there exists a universal constant $K_6 > 0$ such that $\|\tilde{M}\|_{\psi_1} \le K_6 \bar{b} \log(np) \us^{-1}$. By Lemma E.1 in \cite{cck2016a}, there exists a universal constant $K_7 > 0$ such that
		\begin{eqnarray*}
			\E [\max_{\us \le s \le n-\us}  |\sum_{i=1}^n b_{si}X_{i}|_\infty \ ] &\le& K_7 \left\{ \tilde\tau \log^{1/2}(np) + \|\tilde{M}\|_2 \log(np) \right\} \\
			&\le& K_7  \left\{ \bar{b} \log^{1/2}(np) \us^{-1/2} + \bar{b} \log^2(np) \us^{-1} \right\}.
		\end{eqnarray*}
		So it follows that 
		\begin{align*}
		& \Prob\left(\hat\Delta_3 \ge 2 K_7   \left\{ \bar{b} \log^{1/2}(np) \us^{-1/2} + \bar{b} \log^2(np) \us^{-1} \right\} + t \right) \\
		& \qquad \qquad \le \exp\left(-{t^2 \us \over 3 \bar{b}} \right) + 3 \exp \left(-{t \us \over K_8 \bar{b} \log(np)} \right).
		\end{align*}
		Using $t = C  \us^{-1/2} \log^{1/2}(np) \log(\gamma^{-1})$, we get 
		\begin{equation*}
		\Prob\left(\hat\Delta_3 \ge C  \us^{-1/2} \log^{3/2}(np) \right) \le \gamma / 4.
		\end{equation*}
		
		\underline{\it{Part (ii)}}. Assume (D). By Part (ii) of Lemma \ref{lem:maximal_subexponential}, there exists a constant $C_1 := C_1(q) > 0$ such that for all $t > 0$, 
		$$
		\Prob(\hat\Delta_1 \ge 2 \E[\hat\Delta_1] + t) \le \exp\left(-{t^2 \over 3\tau^2} \right) + C_1 {\E[M^{q/2}] \over t^{q/2}},
		$$
		where $\tau^2$ and $M$ have the same definition as in Part (i). As in Part (i), $\tau^2 \le \bar{b}^2 \us^{-1}$. Note that 
		\begin{eqnarray*}
			\E[M^{q/2}] \le C_2(q) \us^{-q/2} \{ \E[\max_{1 \le i \le n} \max_{1 \le j \le p} |X_{ij}|^q] + \max_{1 \le j,k \le p} |\sigma_{jk}|^{q/2} \} \le C_2(q) \bar{b}^q n \us^{-q/2}.
		\end{eqnarray*}
		and 
		\begin{eqnarray*}
			\E[\hat\Delta_1] &\le& K_1 \{\tau \log^{1/2}(np^2) + \|M\|_{q/2} \log(np^2) \} \\
			&\le& K_1 \{ \bar{b} \us^{-1/2} \log^{1/2}(np) + C_2(q)^{2/q} \bar{b}^2 n^{2/q} \log(np) \us^{-1} \}.
		\end{eqnarray*}
		Therefore, there exists a constant $C_3(q) > 0$ such that 
		\begin{align*}
		& \Prob\left(\hat\Delta_1 \ge 2 K_1  \left\{ \bar{b} \us^{-1/2} \log^{1/2}(np) + C_2(q)^{2/q} \bar{b}^2 n^{2/q} \log(np) \us^{-1} \right\} + t\right) \\
		& \qquad \qquad \le \exp\left(-{ t^2 \us \over 3 \bar{b}^2}\right) + C_3(q) {n \bar{b}^q \over t^{q/2} \us^{q/2}}.
		\end{align*}
		Now, choosing $t = C_4 \{ \us^{-1/2} \log^{1/2}(np) + \gamma^{-2/q} \us^{-1} n^{2/q} \}$
		for some large enough constant $C_4 := C_4(\bar{b}, K, q) \ge 1$. Using $\log(\gamma^{-1}) \le K \log(np)$ and $\log^3(np) \le n$, we obtain that 
		\begin{equation}
		\label{eqn:hat_Delta_1_poly}
		\Prob\left(\hat\Delta_1 \ge C_5 \{\us^{-1/2} \log^{3/2}(np) + \gamma^{-2/q} \us^{-1} n^{2/q} \log(np) \} \right) \le \gamma / 4.
		\end{equation}
		Other terms $\hat\Delta_i, i =2,3,4$ can be similarly handled and details are omitted. 
	\end{proof}

	\begin{proof}[of Lemma \ref{lem:bound_bar_psi}]
		Recall that $\bar{X}_s^- = s^{-1} \sum_{i=1}^s X_i$, $\bar{X}_s^+ = (n-s)^{-1} \sum_{i=s+1}^n X_i$, and $\bar{\xi}_s^-, \bar{\xi}_s^+$ are similarly defined by replacing $X_1^n$ with $\xi_1^n$. Then, elementary calculations yield 
		$$
		\bar{X}_s^- = \left\{
		\begin{array}{ll}
		\bar{\xi}_s^-, & \text{if } 1 \le s \le m-1 \\
		\bar{\xi}_s^- + {s-m \over s} \delta_n, & \text{if } m \le s \le n-1 \\
		\end{array} \right. 
		$$
		and
		$$
		\bar{X}_s^+ = \left\{
		\begin{array}{ll}
		\bar{\xi}_s^+ + {n-m \over n-s} \delta_n, & \text{if } 1 \le s \le m-1 \\
		\bar{\xi}_s^+ + \delta_n, & \text{if } m \le s \le n-1 \\
		\end{array} \right. .
		$$
		Let 
		$$
		Z_n^{\xi,*}(s) = \sqrt{n-s \over n} \underbrace{s^{-1/2} \sum_{i=1}^s e_i (\xi_i - \bar{\xi}_s^-)}_{:= A} - \sqrt{s \over n} \underbrace{(n-s)^{-1/2} \sum_{i=s+1}^n e_i (\xi_i - \bar{\xi}_s^+)}_{:=B}
		$$ 
		be the bootstrap CUSUM statistic computed on the transformed data $\xi_1^n$. For $m+1 \le s \le n-1$, we define $b_{is} = -(s-m)/s$ if $1 \le i \le m$, $b_{is} = m / s$ if $m+1 \le i \le s$, and $b_{is} = 0$ otherwise. For $1 \le s \le m-1$, we define $b'_{is} = -(n-m)/(n-s)$ if $s+1 \le i \le m$, $b'_{is} = (m-s) / (n-s)$ if $m+1 \le i \le n$, and $b'_{is} = 0$ otherwise. Then routine algebra show that
		$$
		{1 \over \sqrt{s}} \sum_{i=1}^s e_i (X_i - \bar{X}_s^-) = \left\{
		\begin{array}{ll}
		A, & \text{if } 1 \le s \le m \\
		A + {\delta_n \over \sqrt{s}} \sum_{i=1}^s b_{is} e_i, & \text{if } m+1 \le s \le n \\
		\end{array} \right.
		$$
		and
		$$
		{1 \over \sqrt{n-s}} \sum_{i=s+1}^n e_i (X_i - \bar{X}_s^-) = \left\{
		\begin{array}{ll}
		B + {\delta_n \over \sqrt{n-s}} \sum_{i=s+1}^n b'_{is} e_i, & \text{if } 1 \le s \le m-1 \\
		B, & \text{if } m \le s \le n \\
		\end{array} \right. .
		$$
		Denote $\Cov_e(\cdot)$ as the covariance operator taken w.r.t. the random variables $e_1,\dots,e_n$ By the Cauchy-Schwarz inequality, we have 
		\begin{eqnarray*}
			\bar\psi^2 &=& \max_{\us \le s \le n-\us} \max_{1 \le j \le p} \Big\{ {n-s \over n} \Cov_e(A_j + {\delta_{nj} \over \sqrt{s}} \sum_{i=1}^s b_{is} e_i \vone_{(m+1 \le s \le n-1)} ) \\
			&& \qquad \qquad \qquad + {s \over n}  \Cov_e( B_j + {\delta_{nj} \over \sqrt{n-s}} \sum_{i=s+1}^n b'_{is} e_i \vone_{(1 \le s \le m-1)} ) \Big\} \\
			&\le& 2 \max_{\us \le s \le n-\us} \max_{1 \le j \le p} \Big\{ {n-s \over n} \Cov_e(A_j) + {s \over n}  \Cov_e( B_j) \Big\} \\
			&& + 2 |\delta_n|_\infty^2 \max_{\us \le s \le n-\us} \left[ {1 \over s} \sum_{i=1}^s b_{is}^2 \vone_{(m+1 \le s \le n-1)} \right] \\
			&& + 2 |\delta_n|_\infty^2 \max_{\us \le s \le n-\us} \left[ {1 \over n-s} \sum_{i=s+1}^n {b'}_{is}^2 \vone_{(1 \le s \le m-1)} \right].
		\end{eqnarray*}
		Note that
		\begin{eqnarray*}
			\sum_{i=1}^s b_{is}^2 &=& {m (s-m) \over s} \le s \qquad \text{for } m+1 \le s \le n-1, \\
			\sum_{i=s+1}^n {b'}_{is}^2 &=& {(n-m) (m-s) \over n-s} \le n-s \qquad \text{for } 1 \le s \le m-1.
		\end{eqnarray*}
		Then (\ref{eqn:bound_bar_psi}) is immediate.
	\end{proof}

	\begin{proof}[of Lemma \ref{lem:prob_event_G}]
		\underline{\it{Part (i)}}. If (C) holds,
		First, note that we can write $\tilde{Z}_n(s) = \sum_{i=1}^n v_{is} X_i$, where 
		\[
		v_{is} = \left\{
		\begin{array}{cc}
		{n-s \over n} & \text{if } 1 \le i \le s \\
		-{s \over n} & \text{if } s+1 \le i \le n\\
		\end{array}
		\right. .
		\] 
		Since $\xi_i = X_i - \E(X_i)$, we have $\tilde{Z}_{n}(s) - \E[\tilde{Z}_{n}(s)] = \sum_{i=1}^n v_{is} \xi_i$. By Lemma \ref{lem:maximal_subexponential}, we have
		\begin{align*}
		& \Prob \left( \max_{s \ge m+r} \max_{1 \le j \le p} \left| \sum_{i=1}^n v_{is} \xi_{ij} \right|  \ge 2 \E\left[  \max_{s \ge m+r} \max_{1 \le j \le p} \left| \sum_{i=1}^n v_{is} \xi_{ij} \right| \right] + t \right) \\
		& \quad \le \exp \left( - {t^2 \over 3 \tau^2} \right) + 3 \exp \left( -{t \over K_1 \|M\|_{\psi_1}}\right),
		\end{align*}
		where 
		\[
		\tau^2 = \max_{s \ge m+r} \max_{1 \le j \le p} \sum_{i=1}^n v_{is}^2 \E(\xi_{ij}^2) \quad \text{and} \quad M = \max_{1 \le i \le n} \max_{s \ge m+r} \max_{1 \le j \le p} |v_{is} \xi_{ij}|.
		\]
		Note that 
		\begin{align*}
		\tau^2 & \le \bar{b} \max_{s \ge m+r} \sum_{i=1}^n v_{is}^2 = {\bar{b} \over n} \max_{s \ge m+r} (n-s) s \le {\bar{b} n \over 4}, \\
		\|M\|_2 & = \left\| \max_{1 \le i \le n} \max_{1 \le j \le p} (\max_{s \ge m+r} |v_{is}|) |\xi_{ij}| \right\|_2 \le K_2 \left\| \max_{1 \le i \le n} \max_{1 \le j \le p} (\max_{s \ge m+r} |v_{is}|) |\xi_{ij}| \right\|_{\psi_1} \\
		& \quad \le K_2 \log(np) \max_{1 \le i \le n} \max_{1 \le j \le p}  (\max_{s \ge m+r} |v_{is}|) \left\| \xi_{ij} \right\|_{\psi_1} \le K_2 \bar{b} \log(np).
		\end{align*}
		Using Lemma E.2 in \cite{cck2016a} and $\log^3(np) \le K n$, we have 
		\begin{align*}
		\E\left[  \max_{s \ge m+r} \max_{1 \le j \le p} \left| \sum_{i=1}^n v_{is} \xi_i \right| \right] \le & K_3 \left\{ \sqrt{\log(np)} \tau + \log(np) \|M\|_2 \right\} \\
		\le & K_3 \left\{ \sqrt{\bar{b} n \log(np)} + \bar{b} \log^2(np) \right\} \\
		\le & C_1(\bar{b}, K) \sqrt{n \log(np)}.
		\end{align*}
		Thus we get 
		\begin{align*}
		& \Prob \left( \max_{s \ge m+r} \max_{1 \le j \le p} \left| \sum_{i=1}^n v_{is} \xi_i \right|  \ge  C_1(\bar{b}, K) \sqrt{n \log(np)}  + t \right) \\
		& \quad \le \exp \left( - {4 t^2 \over 3 \bar{b} n} \right) + 3 \exp \left( -{t \over K_4 \bar{b} \log(np)}\right).
		\end{align*}
		Choosing $t = C_2(\bar{b},K)  \sqrt{n \log(\gamma^{-1})}$ and using $\log(\gamma^{-1}) \le K \log(np)$, we have 
		\begin{align*}
		\Prob \Bigg( \max_{s \ge m+r} \max_{1 \le j \le p} \left|\tilde{Z}_{nj}(s) - \E[\tilde{Z}_{nj}(s)]  \right| \ge t^\dagger \Bigg) \le \gamma/36,
		\end{align*}
		where $t^\dagger = C_3(\bar{b}, K) \sqrt{n \log(np)}$. Note that for any two sequences $\{a_i\}$ and $\{b_i\}$, we have by the elementary inequality $| \max_i |a_i| - \max_i |b_i| | \le \max_i | |a_i|-|b_i| | \le \max_i | a_i-b_i |$ that 
		\begin{align*}
		& \Prob \left( \left| \max_{s \ge m+r} \max_{j \in \calS} \left|\tilde{Z}_{nj}(s) \right| - \max_{s \ge m+r} \max_{j \in \calS} \left|\E\tilde{Z}_{nj}(s) \right| \right| \ge t^\dagger \right) \le \gamma/24, \\
		& \Prob \left( \max_{s \ge m+r} \max_{j \in \calS^c} \left|\tilde{Z}_{nj}(s) \right| \ge t^\dagger \right) \le \gamma/36.
		\end{align*}
		Since  
		\begin{align*}
		\max_{s \ge m+r} \max_{j \in \calS} \left|\E\tilde{Z}_{nj}(s) \right| = \max_{s \ge m+r} \max_{j \in \calS} \left|{(n-s)m \over n} \delta_{nj} \right| = n t_m(1-t_m-t_r) |\delta_n|_\infty,
		\end{align*}
		it follows that 
		\begin{align*}
		|\delta_n|_\infty \ge {C_4 \log^{1/2}(np) \over n^{1/2}}
		\end{align*}
		for some large enough constant $C_4 = C_4(\bar{b}, K, c_1, c_2) > 0$ implies that
		\begin{align*}
		\Prob(\mathcal{G}^c) \le &\Prob \left( \max_{s \ge m+r} \max_{j \in \calS} \left|\tilde{Z}_{nj}(s) \right| - \max_{s \ge m+r} \max_{j \in \calS^c} \left|\tilde{Z}_{nj}(s) \right| \le 0 \right) \\
		\le& \Prob \left(\max_{s \ge m+r} \max_{j \in \calS} \left|\tilde{Z}_{nj}(s) \right| \le \max_{s \ge m+r} \max_{j \in \calS} \left|\E\tilde{Z}_{nj}(s) \right| - t^\dagger \right)\\
		&\qquad \qquad + \Prob \left( - \max_{s \ge m+r} \max_{j \in \calS^c} \left|\tilde{Z}_{nj}(s)\right| \le - t^\dagger \right) \\
		\le& \gamma/36 + \gamma/36 = \gamma/18.	
		\end{align*}
		
		In addition, following the same arguments, 
		\begin{align*}
		\Prob \Bigg(  \max_{1 \le j \le p} \left|\tilde{Z}_{nj}(m) - \E[\tilde{Z}_{nj}(m)]  \right| \ge t^\dagger \Bigg) \le \gamma/36.
		\end{align*}
		If $\delta_{nj} \le 0, \forall 1 \le j \le p$, then $\E[\tilde{Z}_{nj}(m)] = n t_m (1-t_m) |\delta_{nj}| \ge 0$, and 
		\begin{align*}
		\min_{1 \le j \le p} \tilde{Z}_{nj}(m) \ge - t^\dagger, \quad \max_{1 \le j \le p} \tilde{Z}_{nj}(m) \ge n t_m (1-t_m) |\delta_{n}|_\infty - t^\dagger \ge t^\dagger 
		\end{align*}
		with probability greater than $1-\gamma/36$ when $C_4 \ge 2 C_3$. In other words, $\left| \max_{1 \le j \le p} \tilde{Z}_{nj}(m) \right| \ge \left| \min_{1 \le j \le p} \tilde{Z}_{nj}(m) \right|$, which implies $ \max_{1 \le j \le p} \left| \tilde{Z}_{nj}(m) \right| =  \max_{1 \le j \le p} \tilde{Z}_{nj}(m) = \tilde{Z}_{nj^*}(m) \ge 0$. Therefore,
		\begin{align*}
		\Prob \left( \max_{1 \le j \le p} \left| \tilde{Z}_{nj}(m) \right| = \tilde{Z}_{nj^*}(m) \right) \ge 1 - \gamma/36.
		\end{align*}
		
		\underline{\it{Part (ii)}}. If (D) holds,
		By Lemma \ref{lem:maximal_subexponential}, we have
		\begin{align*}
		& \Prob \left( \max_{s \ge m+r} \max_{1 \le j \le p} \left| \sum_{i=1}^n v_{is} \xi_i \right|  \ge 2 \E\left[  \max_{s \ge m+r} \max_{1 \le j \le p} \left| \sum_{i=1}^n v_{is} \xi_i \right| \right] + t \right) \\
		& \quad \le \exp \left( -{t^2 \over 3 \tau^2} \right) + K_5 {\E \left[ M^q \right] \over t^q },
		\end{align*}
		where $\tau^2$ and $M$ have the same definition as in Part (i). Since $\tau^2 \le n \bar{b} /4$, we have 
		\begin{align*}
		\|M\|_q^q & = \E \left( \max_{1 \le i \le n} \max_{1 \le j \le p} (\max_{s \ge m+r} |v_{is}|^q) |\xi_{ij}|^q \right) \le \sum_{i=1}^n \E \left(  \max_{1 \le j \le p} |\xi_{ij}|^q \right) \le n\bar{b}^q,
		\end{align*}
		which implies that $\|M\|_2 \le \|M\|_q = n^{1/q} \bar{b}$ for $q \ge 2$. Using Lemma E.2 in \cite{cck2016a} and $\log^3(np) \le K n$, we have 
		\begin{align*}
		\E\left[  \max_{s \ge m+r} \max_{1 \le j \le p} \left| \sum_{i=1}^n v_{is} \xi_i \right| \right] \le & K_4 \left\{ \sqrt{\bar{b} n \log(np)} + \bar{b} n^{1/q}\log(np) \right\}.
		\end{align*}
		Thus we get 
		\begin{align*}
		& \Prob \left( \max_{s \ge m+r} \max_{1 \le j \le p} \left| \sum_{i=1}^n v_{is} \xi_i \right|  \ge  C_5(\bar{b},K) \left\{ \sqrt{ n \log(np)} +  n^{1/q}\log(np) \right\}  + t \right) \\
		& \quad \le \exp \left( - {4 t^2 \over 3 \bar{b} n} \right) + K_5 {n \bar{b^q} \over t^q }.
		\end{align*}
		Choosing $t = C_6(\bar{b},K,q)  \{\sqrt{n \log(\gamma^{-1})} + \gamma^{-1/q} n^{1/q} \}$ and using $\log(\gamma^{-1}) \le K \log(np)$, we have 
		\begin{align*}
		\Prob \Bigg( \max_{s \ge m+r} \max_{1 \le j \le p} \left|\tilde{Z}_{nj}(s) - \E[\tilde{Z}_{nj}(s)]  \right| \ge t^\dagger \Bigg) \le \gamma/36,
		\end{align*}
		where $t^\dagger = C_7(\bar{b}, K, q) \{\sqrt{n \log(np)} + \gamma^{-1/q} n^{1/q} \log(np)\}$. 
		If 
		\begin{align*}
		|\delta_n|_\infty \ge {C_8 \log^{1/2}(np) \over n^{1/2}} \max\left\{ 1, \gamma^{-1/q} n^{1/q-1/2} \log^{1/2}(np) \right\},
		\end{align*}
		for some large enough constant $C_8 = C_8(\bar{b}, K, q, c_1, c_2) > 0$, then it follows from the same argument as in Part (i) that $\Prob(\mathcal{G}^c) \le \gamma/18$. In addition, $\Prob \left( \max_{1 \le j \le p} \left| \tilde{Z}_{nj}(m) \right| = \tilde{Z}_{nj^*}(m) \right) \ge 1 - \gamma/36$.
	\end{proof}

	\begin{proof}[of Lemma~\ref{lem:randomness_ctrl}]
		\underline{\it{Part (i).} Assume (C).} Consider $\calA_n$ first.
		Apply Lemma~\ref{lem:maximal_subexponential} to $a_{i,s_1,s_2} = {1 \over \sqrt{s_2-s_1+1} } \vone_{\{s_1 \le i \le s_2\}}$ (i.e. $s = (s_1,s_2) \in \N^2$) and $X_{ij} = \xi_{ij}$, we have for $\forall t > 0$
		\begin{eqnarray}
		\nonumber
		&& \Prob(\max_{1 \le s_1 \le s_2 \le n} | \sum_{i=1}^{n} a_{i,s_1,s_2} \xi_i|_\infty \ge 2 \E[\max_{1 \le s_1 \le s_2 \le n} | \sum_{i=1}^{n} a_{i,s_1,s_2} \xi_i|_\infty ] + t) \\
		\label{eqn:location_rate_concentration_subexp_mcp}
		&& \qquad \le \exp\left(-{t^2 \over 3 \tau^2}\right) + 3 \exp\left(-{t \over K_1 \|M\|_{\psi_1}} \right),
		\end{eqnarray}
		where 
		\begin{eqnarray*}
			\tau^2 &=& \max_{1 \le s_1 \le s_2 \le n} \max_{1 \le j \le p} \sum_{i=1}^n a_{i,s_1,s_2}^2 \E \xi_{ij}^2, \\
			M &=& \max_{1 \le i \le n} \max_{1 \le s_1 \le s_2 \le n} \max_{1 \le j \le p} |a_{i,s_1,s_2} \xi_{ij}|.
		\end{eqnarray*}
		Since $\tau^2 \le \bar{b}$, and by Lemma 2.2.2 in \cite{vandervaartwellner1996}, we have $\|M\|_2 \le 2 \|M\|_{\psi_1} \le C \bar{b} \log(n^3p) \le C' \bar{b} \log(np)$. By Lemma E.1 in \cite{cck2016a}, we have 
		\begin{equation}
		\label{eqn:rate_location_expectation_bound_mcp}
		\E[\max_{1 \le s_1 \le s_2 \le n} | \sum_{i=1}^{n} a_{i,s_1,s_2} \xi_i|_\infty] \le K_2 \{ \tau \log^{1/2}(np) + \|M\|_2 \log(np) \}.
		\end{equation}
		Choosing $t = K_3 \bar{b} \log(np) \log(\gamma^{-1})$ in (\ref{eqn:location_rate_concentration_subexp_mcp}) for some large enough universal constant $K_3 > 0$, we deduce that there exists a constant $C := C(\bar{b},K) > 0$ such that 
		\[
		\Prob(\max_{1 \le s_1 \le s_2 \le n} |a_{i,s_1,s_2}  \sum_{i=1}^s \xi_i|_\infty \ge K \bar{b}  \log^2 (np) ) \le \gamma.
		\]
		So $\Prob (\calA_n) \ge 1- \gamma$. 
		
		For $\calB_n$, replacing $a_{i,s}$ by  $a_{i,s_1,s_2} = {1 \over \sqrt{s_2-s_1+1} } \vone_{\{s_1 \le i \le s_2\}}$ in our [45] and [46], we have 
		\[
		\Prob(\max_{1 \le s_1 \le s \le s_2 \le n} |Z_n(s) - \E[Z_n(s)]|_\infty \ge K \bar{b}  \log^2 (np) ) \le \gamma.
		\]
		for $\log(\gamma^{-1}) \le K  \log (np)$. Then, $\Prob (\calB_n) \ge 1- \gamma$.
		
		\underline{\it{Part (ii).} Assume (D).} Assume (D) with $q \ge 2$. 
		Similarly, apply Lemma~\ref{lem:maximal_subexponential} to $a_{i,s_1,s_2}$ and $X_{ij} = \xi_{ij}$, we have for $\forall t > 0$
		\begin{eqnarray}
		\nonumber
		&& \Prob(\max_{1 \le s_1 \le s_2 \le n} | \sum_{i=1}^{n} a_{i,s_1,s_2} \xi_i|_\infty \ge 2 \E[\max_{1 \le s_1 \le s_2 \le n} | \sum_{i=1}^{n} a_{i,s_1,s_2} \xi_i|_\infty ] + t) \\
		\label{eqn:location_rate_concentration_poly_mcp}
		&& \qquad \le \exp\left(-{t^2 \over 3 \tau^2}\right) + C(q) {\E[M^q] \over t^q},
		\end{eqnarray}
		where $\tau^2$ and $M$ have the same definitions as in Part (i). By Lemma 2.2.2 in \cite{vandervaartwellner1996}, we have $\|M\|_2 \le \|M\|_q \le n^{3/q} \bar{b}$. Note that $\tau^2 \le \bar{b}$ and $\E[\max_{1 \le s \le n} |Z_n(s) - \E[Z_n(s)]|_\infty]$ obeys the bound in (\ref{eqn:rate_location_expectation_bound_mcp}). Hence, choosing $t = C(q) \{ \bar{b}^{1/2} \log(\gamma^{-1}) +  \bar{b} n^{3/q}  \gamma^{-1/q} \}$ in (\ref{eqn:location_rate_concentration_poly_mcp}), we get 
		$$
		\Prob\left(\max_{1 \le s \le n} |Z_n(s) - \E[Z_n(s)]|_\infty \ge C(q) \bar{b} n^{3/q} (\log(np) + \gamma^{-1/q}) \right) \le 2 \log^{-q}(np).
		$$
		
	\end{proof}

	\begin{proof}[of Lemma~\ref{lem:max_delta_s_location}]
		Denote the CUSUM mean computed on $X_1^n$ as 
		\[
		\Delta_s = \E Z_{n}(s) = -\sum_{k=1}^{\nu} \left[ \sqrt{s \over n(n-s)} (n-m_k) \ \delta^{(k)} \ \vone_{\{s \le m_k\}} + \sqrt{(n-s) \over ns} m_k  \ \delta^{(k)} \ \vone_{\{s > m_k\}} \right].
		\]
		In particular, for $m_k < s \le m_{k+1}$ of $k = 0, \dots, \nu$,
		\begin{equation}
		\label{eqn:Delta_s_mcp}
		\Delta_s = - \sqrt{s(n-s) \over n} \left( {m_0 \delta^{(0)} + \dots + m_k \delta^{(k)} \over s } + { (n- m_{k+1}) \delta^{(k+1)} + \dots + (n-m_{\nu+1}) \delta^{(\nu+1)} \over n-s }  \right).
		\end{equation}
		Let $\Delta (s)$ be defined as the same expression of (\ref{eqn:Delta_s_mcp}) but on the whole real numbers $s \in (1,n)$. 
		
		\underline{Step 1: Suppose time of changes $\nu = 2$ and data are univariate $p=1$.}
		
		\begin{equation}  
		f (s) := {d  \over ds}\Delta(s) = \left\{  
		\begin{aligned}
		&- {1 \over 2} \sqrt{n \over s(n-s)^3} \left( (n-m_1) \delta^{(1)} + (n-m_2) \delta^{(2)} \right), s < m_1  \\  
		& \ {1 \over 2} \sqrt{n \over s(n-s)} \left( {m_1 \over s} \delta^{(1)} - {n-m_2 \over n-s} \delta^{(2)} \right), m_1 < s < m_2 \\  
		& \ {1 \over 2} \sqrt{n \over s(n-s)^3} \left( m_1 \delta^{(1)} + m_2 \delta^{(2)} \right), s>m_2     
		\end{aligned}  
		\right.  
		\end{equation}  
		
		(i) Suppose $\sign(\delta^{(1)}) = \sign(\delta^{(2)}) \neq 0$, then the sign of the first derivative of $\Delta(s)$ is summarized as the table below, where $s_0$ satisfies $s_0/(n-s_0) = (m_1 \delta^{(1)}) / ((n-m_2)\delta^{(2)}) $ when $|\delta^{(1)}| / |\delta^{(2)}| \in ( {n-m_2 \over n-m_1}, {m_2 \over m_1}) $.
		\begin{table}
			\begin{tabular}{c|c||c|c|c|c}
				\hline
				\multicolumn{2}{c||}{$\sign(f(s))$}                                                & $s<m_1$                                 & \multicolumn{2}{c|}{$m_1<s<m_2$}                & $m_2<s$                                \\ \hline
				\multirow{4}{*}{$|\delta^{(1)}| \over  |\delta^{(2)}|$} & $(0, {n-m_2 \over n-m_1})$                                 & $-\sign (\delta^{(1)})$                 & \multicolumn{3}{c}{$\sign(\delta^{(1)})$}                                               \\ \cline{2-6} 
				& \multirow{2}{*}{$( {n-m_2 \over n-m_1}, {m_2 \over m_1})$} & \multirow{2}{*}{$-\sign(\delta^{(1)})$} & $m_1<s<s_0$           & $s_0<s<m_2$            & \multirow{2}{*}{$\sign(\delta^{(1)})$} \\ \cline{4-5}
				&                                                            &                                         & $\sign(\delta^{(1)})$ & $-\sign(\delta^{(1)})$ &                                        \\ \cline{2-6} 
				& $({m_2 \over m_1}, + \infty)$                               & \multicolumn{3}{c|}{$-\sign(\delta^{(1)})$}                                               & $\sign(\delta^{(1)})$                  \\ \hline
			\end{tabular}
		\end{table}
		Observing that $\Delta(s)$ is continuous and always with sign of $-\sign(\delta^{(1)})$. The maximum of $|\Delta(s)|$ locates at either $m_1$ or $m_2$. 
		
		(ii) Suppose $\sign(\delta^{(1)}) = -\sign(\delta^{(2)}) \neq 0$, then based on the summarized $\sign(f(s))$ in the table below and the fact that $\Delta(s)$ is continuous and always with sign $-\sign(\delta^{(1)})$, we claim the maximum of $|\Delta(s)|$ locates at either $s=m_1$ or $s=m_2$. 
		
		\begin{table}
			\begin{tabular}{c|c||c|c|c}
				\hline
				\multicolumn{2}{c||}{$\sign(f(s))$}                                                                  & $s<m_1$                & $m_1<s<m_2$           & $m_2<s$                \\ \hline
				\multirow{3}{*}{$|\delta^{(1)}| \over  |\delta^{(2)}|$} & $(0, {n-m_2 \over n-m_1})$                & \multicolumn{2}{c|}{$\sign (\delta^{(1)})$}     & $-\sign(\delta^{(1)})$ \\ \cline{2-5}
				& $( {n-m_2 \over n-m_1}, {m_2 \over m_1})$ & $-\sign(\delta^{(1)})$ & $\sign(\delta^{(1)})$ & $-\sign(\delta^{(1)})$ \\ \cline{2-5} 
				& $({m_2 \over m_1}, + \infty)$              & $-\sign(\delta^{(1)})$  & \multicolumn{2}{c}{$\sign(\delta^{(1)})$}   \\ \hline
			\end{tabular}
		\end{table}

		\underline{Step 2: Generalization to multiple change points $\nu \ge 2$ and multiple dimension $p \ge 1$.}
		Based on arguments in Step 1, for any $s \in (m_k, m_{k+1})$ of $k = 1, \dots, \nu-1$, the maximum of $Z_n(s)$ happens at boundaries of intervals, i.e. $\{m_1, \dots, m_\nu\}$. Notice that when $s \in (m_k, m_{k+1})$ for $k=0, \nu$,  it reduces to the case of single change point where $Z_n(s)$ reaches its maximum at $m_1$ or $m_\nu$. In addition, 
		\[
		\underset{s=1, \dots, n-1}{\argmax} |\Delta_s|_\infty \subset \underset{j=1, \dots, p}{\cup} \underset{s}{\argmax} |\Delta_j (s)| \subset \{m_k, k = 1, \dots, \nu\}
		\]
		Therefore, $\argmax_{s=1, \dots, n-1} |\Delta_s|_\infty \subset \{m_k, k = 1, \dots, \nu\}$ for $\nu \ge 1$ and $p \ge 1$.
		
		\underline{Step 3: Characterization of signal size $\max_{k = 1, \dots, \nu} |\Delta_{m_k}|_\infty$.} 
		Suppose $p=1$. Let $\nu \ge 2$ otherwise it reduces to single change point case which obeys the lower bound in Lemma (see (\ref{eqn:Delta_s}) in the proof of Theorem \ref{thm:power_bootstrap_cusum_test}). Denote $L = \sum_{k=1}^l m_k \delta^{(k)}, R = \sum_{k=l+1}^\nu (n-m_k) \delta^{(k)}$ for some $1\le l \le \nu$. Then by (\ref{eqn:Delta_s_mcp}),
		\[
		\left\{
		\begin{array}{l}
		\Delta_{m_{l}} = - {1 \over \sqrt{n}} \underbrace{\left[ \sqrt{n-m_l \over m_l} L + \sqrt{m_l \over n-m_l} R \right]}_{:=A} \\
		\Delta_{m_{l+1}} =  - {1 \over \sqrt{n}} \underbrace{\left[ \sqrt{n-m_{l+1} \over m_{l+1}} L + \sqrt{m_{l+1} \over n-m_{l+1}} R \right]}_{:=B} \\
		\end{array} \right. .
		\]
		Since $A$ and $B$ cannot be 0 at the same time, we will show that (i) when $L$ and $R$ have the same sign, $|A|$ itself is large and (ii) when $L$ and $R$ have the opposite sign, $|A-B|$ is still large even if signal cancellation exists. Therefore,  $\max \{O(|A|), O(|B|)\} ) = O(|A-B|)$ can be bounded below in either case, and so does $\max_{l=1, \dots, \nu} |\Delta_{m_l}|$.
		
		First, suppose $L$ and $R$ have the opposite sign. WLOG, let $L>0$. 
		\begin{align*}
		|A-B| &= \left( \sqrt{n-m_l \over m_l} - \sqrt{n-m_{l+1} \over m_{l+1}} \right) |L| + \left( \sqrt{m_l \over n-m_l} -\sqrt{m_{l+1} \over n-m_{l+1}} \right) |R| \\
		&= {n (m_{l+1} - m_{l}) / \sqrt{m_{l+1} m_{l}}  \over \sqrt{m_{l+1} (n-m_{l})}  \sqrt{m_{l1} (n-m_{l+1})}} \ |L| +  {n (m_{l+1} - m_{l}) / \sqrt{ (n- m_{l+1}) (n- m_{l})} \over  \sqrt{m_{l+1} (n-m_{l})}  \sqrt{m_{l1} (n-m_{l+1})}} \ |R| \\
		&\ge  { m_{l+1} - m_{l} \over 2 m_{l+1}}{ n  \over \sqrt{m_{l} (n-m_{l})} } \ |L| + { m_{l+1} - m_{l} \over 2 (n-m_{l}) }{ n  \over \sqrt{m_{l+1} (n-m_{l+1})} } \ |R|
		\end{align*}
		Denote $\udelta = \min_{k = 1, \dots, \nu} |\delta^{(k)}|$. Claim that we can find an $l$ such that $\max\{|L|, |R| \} \ge C_1 D_v \udelta$ for some constant $C_1$. Otherwise, we just need to move $l$ to $l+1$ such that $L$ and $R$ change $m_{l+1} \delta^{(l+1)} \ge D_\nu \udelta$ and $(n-m_{l+1}) \delta^{(l+1)} \ge D_\nu \udelta$, respectively, in opposite direction so that they still have opposite signs.  (Move $l$ to $l-1$ if $l$ is the last change point.) 
		Note that $\max\{  { m_{l+1} - m_{l} \over 2 m_{l+1}}, { m_{l+1} - m_{l} \over 2 (n-m_{l}) }\} \ge {D_\nu \over n}$. Consequently, we have $|A-B| \ge C_1 {D_\nu^2 \udelta \over \sqrt{m_{l} (n-m_{l})}}$. 
		Next, suppose $L$ and $R$ have the same sign. 
		We still have $\max\{|L|, |R| \} \ge C_2 D_v \udelta$ for some constant $C_2$. Otherwise, we move $l$ to $l+1$ and it reduces to the case above. Then, $|A| \ge C_2 (\sqrt{n-m_l \over m_l} \wedge \sqrt{m_l \over n-m_l}) D_v \udelta \ge C_2 {D_\nu^2 \udelta \over \sqrt{m_{l} (n-m_{l})}}$.
		Therefore, when $p=1$, there exists some constant $C$ such that  
		$
		\max_{l=1, \dots, \nu} |\Delta_{m_l}| \ge \max_{l=1, \dots, \nu} {C D_\nu^2 \udelta \over \sqrt{n m_{l} (n-m_{l})}}.
		$
		
		For the case of multiple dimension $p>1$, recall $\bar\delta_n = \min_{k=1, \dots, \nu} |\delta_n^{(k)}|_\infty$. Since for the dimension $h = \argmax_{j = 1,\dots, p} |\delta_{n,j}^{(l+1)}|$, when $l$ moves to $l+1$ the terms, $L_h$ and $R_h$ changes in opposite direction with distances at least $D_v |\delta_h^{(l+1)}| \ge D_v \bar\delta_n$.  Then, $\max\{|L|_\infty, |R|_\infty \} \ge C_3 D_v \bar\delta_n$ still holds. So based on similar argument, we can conclude that
		$$
		\max_{l=1, \dots, \nu} |\Delta_{m_l}|_\infty \ge \max_{l=1, \dots, \nu} {C D_\nu^2 \bar\delta_n \over \sqrt{n m_{l} (n-m_{l})}}.
		$$
		
	\end{proof}


	\section{Additional simulation results}
	\label{sec:addtional_sim_results}
	
	\subsection{Comparison of our bootstrap CUSUM test with other methods under $H_0$.}
	\label{subsec:compare_H0_benchmark_simu}
	Under the same set up as in Section~\ref{subsubsec:size_comparison} , we first compare our bootstrap CUSUM test with two benchmark methods in the following.
	
	\begin{enumerate}[leftmargin=1cm,itemindent=.5cm,labelwidth=\itemindent,align=parleft, label=(\roman*)]
		\item The bootstrapped log-ratio of maximized likelihood test (denoted as logLik) corresponds to $\log(\Lambda_{s})$ in \eqref{eqn:max_likelihood_ratio} when $p < n$ and
		$$\log (\Lambda_s^*) = Z_n^*(s)^T \hat{\Sigma}^{-1} Z_n^*(s)/2,$$
		where $\hat{\Sigma}$ is the sample covariance matrix of $X_{1}^{n}$ and $Z_n^*(s)$ is the bootstrap CUSUM statistic (\ref{eqn:bootstrap_cusum}). The testing procedure is similar to our bootstrap CUSUM test: reject $H_0$ if $\log(\Lambda_{s})$ is greater than the $(1-\alpha)$-th quantile of bootstrapped $\log (\Lambda_s^*)$.
		
		\item The oracle test is based on $\bar{Y}_n = |Y_n|_\infty$ , i.e., the Gaussian approximation of $T_n$, with {\it known} covariance matrix. By our definition (\ref{eqn:gaussian_analog_cusum_statistic}), $Y_n$ is a Gaussian copy of $\vec(\vZ_n) = (Z_n(\us)^\top, \dots, Z_n(n-\us)^\top)^\top$, whose covariance is close to $\Cov(Z_n^* | X_1^n)$. The oracle test rejects $H_0$ if $T_n$ is greater than the $(1-\alpha)$-th quantile of $\bar{Y}_n$. 
	\end{enumerate}
	%
	Table~\ref{tab:our_vs_benchmarks_size} lists the uniform error-in-size $\sup_{\alpha \in (0,1)} |\hat{R}(\alpha) - \alpha|$ for our test and the two benchmarks.  Each column corresponds to a combination of one distribution family and one cross-sectional dependence structures, and the rows compare the logLik (when $p<n$), our proposed bootstrap method with $\us=30, 40$ and the corresponding benchmark $\bar{Y}_n$ under different settings of $p=10,300,600$. 
	There are several observations we can draw. First, our proposed test has much smaller errors than logLik even when $p=10$ and $300$. 
	Second, the errors of our tests are comparable with those from corresponding $\bar{Y}_n$ in all settings. It shows that our bootstrapped $T_n^*$ is remarkably close to $\bar{Y}_n$ that is generated with given $\Sigma$.
	So our CUSUM test works well as a fully data-dependent approach compared to benchmarks.

	\begin{table}
		\caption{\label{tab:our_vs_benchmarks_size} Comparison of uniform error-in-size $\sup_{\alpha \in [0,1]} |\hat{R}(\alpha) - \alpha|$ under $H_0$ among our test and two benchmarks. (Parameters: $n=500$ $p=10,300,600$, $\us=30, 40$) and data are simulated from all combinations of distribution families and covariance dependence structures.)}
		\centering
		\fbox{
		\begin{tabular}{l|l|lll|lll|lll}
			\multicolumn{2}{c|}{\multirow{2}{*}{$n=500$}} & \multicolumn{3}{c|}{Gaussian} & \multicolumn{3}{c|}{$t_6$} & \multicolumn{3}{c}{ctm-Gaussian} \\ \cline{3-11} 
			\multicolumn{2}{c|}{}          & I        & II       & III     & I       & II      & III    & I         & II        & III       \\ \hline
			\multirow{5}{*}{$p=10$}        & logLik       & 0.150    & 0.151   & 0.159   & 0.122   & 0.134   & 0.124  & 0.136     & 0.134     & 0.124     \\ \cline{2-11} 
			& $\us=30$       & 0.034    & 0.036    & 0.041   & 0.048   & 0.041   & 0.039  & 0.036     & 0.042     & 0.021     \\
			& $\bar{Y}_n$        & 0.037    & 0.041    & 0.022   & 0.030   & 0.030   & 0.038  & 0.035     & 0.029     & 0.052     \\ \cline{2-11} 
			& $\us=40$       & 0.042    & 0.034    & 0.037   & 0.043   & 0.037   & 0.033  & 0.041     & 0.042     & 0.043     \\
			& $\bar{Y}_n$        & 0.028    & 0.041    & 0.030   & 0.028   & 0.053   & 0.044  & 0.023     & 0.033     & 0.035     \\ \hline
			\multirow{5}{*}{$p=300$}       & logLik       & 0.837    & 0.835    & 0.834   & 0.680   & 0.669   & 0.670  & 0.691     & 0.679     & 0.681     \\ \cline{2-11} 
			& $\us=30$       & 0.054    & 0.051    & 0.050   & 0.085   & 0.036   & 0.049  & 0.115     & 0.025     & 0.065     \\
			& $\bar{Y}_n$        & 0.024    & 0.046    & 0.039   & 0.057   & 0.064   & 0.066  & 0.044     & 0.067     & 0.036     \\ \cline{2-11} 
			& $\us=40$       & 0.046    & 0.026    & 0.035   & 0.058   & 0.030   & 0.040  & 0.057     & 0.032     & 0.055     \\
			& $\bar{Y}_n$        & 0.033    & 0.025    & 0.066   & 0.060   & 0.033   & 0.064  & 0.045     & 0.025     & 0.061     \\ \hline
			\multirow{4}{*}{$p=600$}       & $\us=30$       & 0.051    & 0.035    & 0.048   & 0.122   & 0.044   & 0.088  & 0.103     & 0.030     & 0.096     \\
			& $\bar{Y}_n$        & 0.030    & 0.045    & 0.049   & 0.048   & 0.043   & 0.068  & 0.039     & 0.055     & 0.051     \\ \cline{2-11} 
			& $\us=40$       & 0.060    & 0.055    & 0.046   & 0.083   & 0.038   & 0.087  & 0.079     & 0.026     & 0.057     \\
			& $\bar{Y}_n$        & 0.041    & 0.041    & 0.042   & 0.041   & 0.025   & 0.049  & 0.034     & 0.038     & 0.077     \\ 
		\end{tabular}
	}
	\end{table}

	Next, we compare our method with the tests in \cite{jirak2015} and \cite{enikeevaharchaoui2014} under the setting $n=500, p=600, \us=40$ under $H_0$. In \cite{jirak2015}, change points are allowed to occur at different locations in each coordinate. 
	To avoid simulation issue in estimating the long-run variance (which is necessary because \cite{jirak2015} uses non-stationary CUSUM statistics), we employ the true variance $\hat\sigma_h^2 = \sigma_h^2$ in their test statistic \citep[Equation (1.2)]{jirak2015} and take the suggested multiplier $\xi_l^2=1$ in $\hat{s}_h^2$ \citep[Equation (4.5)]{jirak2015} when calculating conditional long-run variance in bootstrap. This modified stronger algorithm is denoted by their test statistic $B_n$.
	In observation of the fact that long-run variances for all coordinates are the same in our simulation setting, we also enhanced it to another competitor denoted as $B_n$-enhanced by removing both $\hat\sigma_h^2$ and $\hat{s}_h^2$. 
	In \cite{enikeevaharchaoui2014}, the original test without boundary removing and an improved version with $\us=40$ are implemented (denoted as their test statistic $\psi$ and $\psi$-improved, respectively).
	We set the tuning parameter $\kappa$ to be 6.6 as suggested by the authors.
	Similar to Table~\ref{tab:our_size_and_nominal05}, uniform error-in-size empirical type I error at $\alpha = 0.05$ are shown in Table~\ref{tab:compareH0_size_nominal05}, where we draw the following comparison results. First, the test $\psi$ in \cite{enikeevaharchaoui2014} suffers from size distortion except for the case of Gaussian distribution with identity covariance structure because the $\psi$ relies heavily on the assumption $X_i \sim N(\vzero,\sigma^2 \Id_{p})$. 
	Second, boundary removal helps $\psi$ to reduce the uniform error-in-size, cf.\ the $\psi$-improved. 
	Third, the test $B_n$ in \cite{jirak2015} and $B_n$-enhanced behave similarly and they are comparable with ours. However, note that the two $B_n$ tests receive stronger priori that coordinate-wise (long-run) variances are all equal or assumed given, which is impractical. On the contrary, our proposed method does not involve such estimators of variance. 
	Lastly, in lower Table~\ref{tab:compareH0_size_nominal05}, the test $B_n$ inflates Type-I error than pre-specified 5\% while ours behaves conversely.

	\begin{table}
		\caption{\label{tab:compareH0_size_nominal05} Uniform error-in-size $\sup_{\alpha \in [0,1]} |\hat{R}(\alpha) - \alpha|$ and empirical type I error $\hat{R}(0.05)$ for \cite{jirak2015} ($B_{n}$ and $B_{n}$-enhanced) and \cite{enikeevaharchaoui2014} ($\psi$ and $\psi$-improved) under $H_0$. (Parameters: $n=500, p=600, \us = 1,40$ and data are simulated from all combinations of distribution families and covariance dependence structures.)}
		\centering
		\fbox{
		\begin{tabular}{ll|lll|lll|lll}
			\multicolumn{2}{c|}{\multirow{2}{*}{}} & \multicolumn{3}{c|}{Gaussian} & \multicolumn{3}{c|}{$t_6$} & \multicolumn{3}{c}{ctm-Gaussian} \\ \cline{3-11} 
			\multicolumn{2}{c|}{}                                & I        & II       & III     & I       & II      & III    & I         & II        & III       \\ \hline
			\multicolumn{11}{c}{$\sup_{\alpha \in [0,1]} |\hat{R}(\alpha) - \alpha|$} \\\hline
			$B_n$                        & $\us=40$              & 0.073    & 0.026    & 0.049   & 0.101   & 0.019   & 0.087  & 0.083     & 0.017     & 0.068     \\
			$B_n$-enhanced               & $\us=40$              & 0.071    & 0.025    & 0.063   & 0.063   & 0.041   & 0.068  & 0.061     & 0.016     & 0.067     \\ \hline
			$\psi$                       & $\us=1$               & 0.368    & 0.988    & 0.882   & 0.990   & 0.990   & 0.990  & 0.990     & 0.990     & 0.990     \\
			$\psi$-improved              & $\us=40$              & 0.186    & 0.959    & 0.646   & 0.990   & 0.990   & 0.990  & 0.990     & 0.987     & 0.990     \\ \hline
			\multicolumn{11}{c}{$\hat{R}(0.05)$} \\\hline
			$B_n$                        & $\us=40$              & 0.054    & 0.073    & 0.063   & 0.066   & 0.060   & 0.059  & 0.056     & 0.057     & 0.063     \\
			$B_n$-enhanced               & $\us=40$              & 0.057    & 0.065    & 0.061   & 0.038   & 0.052   & 0.054  & 0.061     & 0.059     & 0.058     \\ \hline
			$\psi$                       & $\us=1$               & 0.110    & 0.998    & 0.923   & 1       & 1       & 1      & 1         & 1         & 1         \\
			$\psi$-improved              & $\us=40$              & 0.055    & 0.973    & 0.696   & 1       & 1       & 1      & 1         & 0.997     & 1         \\ 
		\end{tabular}
	}
	\end{table}

\subsection{Additional results on RMSE for location estimators}
\label{subsec:rmse_Supplementary material}

Figure~\ref{fig:loc_compare} illustrates contrasts among the three algorithms. 
Figure~\ref{fig:loc_compare_subfig:a} compares our non-truncated $t_{\hat{m}_0}$ with \cite{wangsamworth2017} which has no boundary removal. On the left plot (where $t_m=3/10$ is fixed), the RMSEs of \cite{wangsamworth2017} are large when signal is sparse ($k=1$) or data are non-Gaussian distributed with cross-sectional dependent. However, our $t_{\hat{m}_0}$  can identify sparse signal under $t_6$ distribution with high cross-sectional correlation and performs even comparably to their estimator when there is dense signal ($k=50$). On the right plot (where distribution is fixed as ctm-Gaussian with cross-sectional structure III), both approaches have large RMSEs when the change point is close to boundary. But ours is uniformly better under the corresponding sparse alternative. 
Figure~\ref{fig:loc_compare_subfig:b} compares our truncated $t_{\hat{m}_{1/2}}$ with \cite{chofryzlewicz2015} which also has boundary removal in their R package. We set $\us = 40$ for both algorithms. Their method works well for the case of $t_m=1/10$ as shown in the left of Figure~\ref{fig:loc_compare_subfig:b}, and distribution has less effect on \cite{chofryzlewicz2015} than on \cite{wangsamworth2017}. But when signal size is 1 and larger, our $t_{\hat{m}_{1/2}}$ almost dominates solid-symbol curves that are under the same distribution. Corresponding to the non-monotone RMSE issue mentioned in simulation, the right of Figure~\ref{fig:loc_compare_subfig:b} shows similar phenomenon even we choose their threshold parameter by using upper $\alpha=0.01$ quantile from their bootstrap sample rather than default $\alpha= 0.05$ in their R package. Under dense alternative, their searching process fail to discover a change when signal is strong.

\begin{figure}
	\centering
	\subfigure[RMSEs of \texttt{Inspect} and our non-truncated $t_{\hat{m}_0}$ ($\us=1$). Left: distribution effect for $t_m = 3/10$; Right: location effect for data from ctm-Gaussian distribution with Covariance II.]{
		\includegraphics[trim = 0 0 0 50, clip,width=0.7\textwidth,height=0.27\textheight]{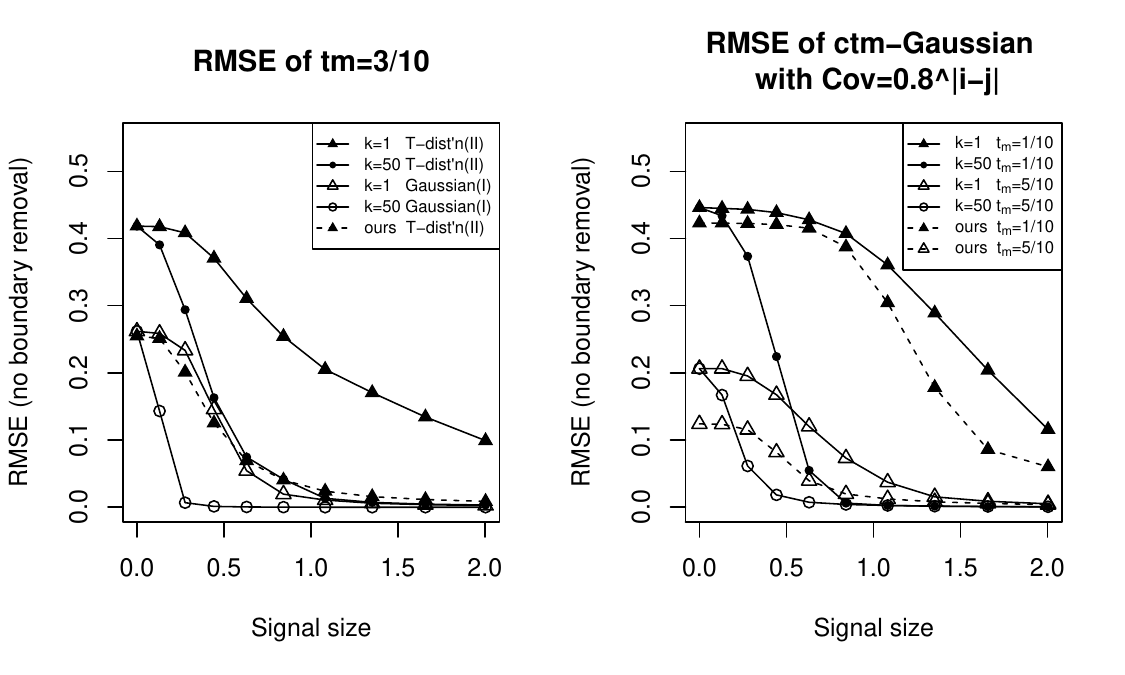}
		\label{fig:loc_compare_subfig:a}}
	\vspace{-0.1in}
	\subfigure[RMSEs of \texttt{SBS} and our truncated $t_{\hat{m}_{1/2}}$ ($\us=40$ for both methods). Left: distribution effect for $t_m = 1/10$; Right: non-monotone RMSE for $t_m = 5/10$, where data are from ctm-Gaussian distribution with Covariance II.]{
		\includegraphics[trim = 0 0 280 50, clip, width=0.35\textwidth,height=0.27\textheight]{RMSE_Choeg2.pdf}
		\includegraphics[trim = 0 0 0 50, clip,width=0.35\textwidth,height=0.27\textheight]{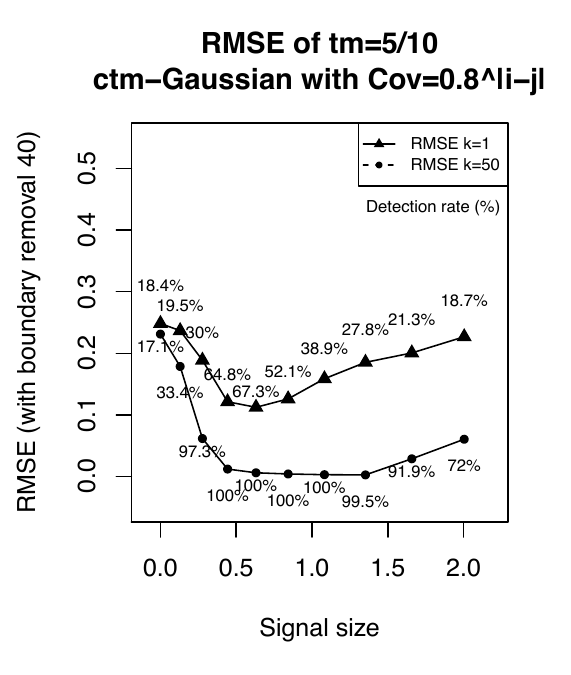}
		\label{fig:loc_compare_subfig:b}}
	\caption{Comparison of RMSEs among \texttt{Inspect}, \texttt{SBS} and ours (under sparse signal only). Parameters: $n=500, p=600$ and signal sparsity $k=1,50$.}
	\label{fig:loc_compare} 
\end{figure}


\subsection{Extension to time series: a block multiplier bootstrap}\label{subsec:extension_block_mb}	
\label{subsec:time_series_block_bootstrap_CUSUM_test}

	We shall study the empirical performance of the block multiplier bootstrap CUSUM test for some dependent process $\xi_{i}$. Theoretical supports for the block bootstrap CUSUM test are beyond the scope of this paper and they can be derived using the recent development of the Gaussian and bootstrap approximation results for high-dimensional time series (see e.g.,~\cite{zhangcheng2014,zhangwu2016a}).
	In our simulation, we consider the stationary vector autoregression of order 1 (denote as VAR(1)) error process:
	$
	\xi_i = A\xi_{i-1} + \eta_i = \sum_{k=0}^{\infty} A^{k} \eta_{i-k},
	$ 
	where $\{\eta_{i}\}_{i \in \mathbb{Z}}$ is a sequence of i.i.d.\ mean-zero random vectors in $\mathbb{R}^{p}$ and $A$ is a $p \times p$ coefficient matrix, where random matrix $A$ is generated with i.i.d.\ $N(0,1)$ entries. To ensure the stationarity of $\xi_{i}$ process, $A$ is normalized such that $\|A\|_2=1/1.8<1$. In this section, we fix $n=500, p=600, \us=40$, and consider different block sizes $M=2, 5, 10, 15$. 
	
	We first investigate the performance of the modified block Gaussian multiplier bootstrap CUSUM test. Since distributional impact has already been evaluated in Section~\ref{subsec:simulation_single_cp}, the same setup in Figure~\ref{fig: AlphaEg}  are selected as example to illustrate the impact of $M$. In Figure~\ref{fig: AlphaEg_TS}, the $\hat{R}(\alpha)$ curves show similar (and slightly less accurate) behavior as in temporally independent case. The approximation accuracy also depends on the block size parameter $M$ (which adjusts for the temporal dependence), in addition to cross-sectional dependence. 

	The block bootstrap algorithm $B_n$ \citep{jirak2015} delivers similar message in Figure~\ref{fig:JirakTS}. Note that, we primarily compare the performances of bootstrap testing instead of estimation of $\hat{\sigma}_h^2$ that is an influential factor in practice. So the long-run variance estimator $\hat{\sigma}_h^2$ is substituted by its theoretical value ${\sigma}_h^2, h=1,\cdots,p$, i.e.\ the $h$-th diagonal element of $\Sigma(0) + \sum_{l=1}^\infty A^l \Sigma(0) + \Sigma(0) \sum_{l=1}^\infty (A^T)^l$ where $\Sigma(0) = \sum_{l=0}^\infty A^l \Sigma (A^T)^l$ is the lag-0 auto-covariance of $\{X_i\}$.
	The size approximation is accurate under spatial independent scenarios $V=Id_p$ in general, while larger block size $M$ is suggested when $V=0.8J+0.2Id_p$.

	\begin{figure}
		\centering
		\includegraphics[trim = 0 20 10 30, clip, width= 5in]{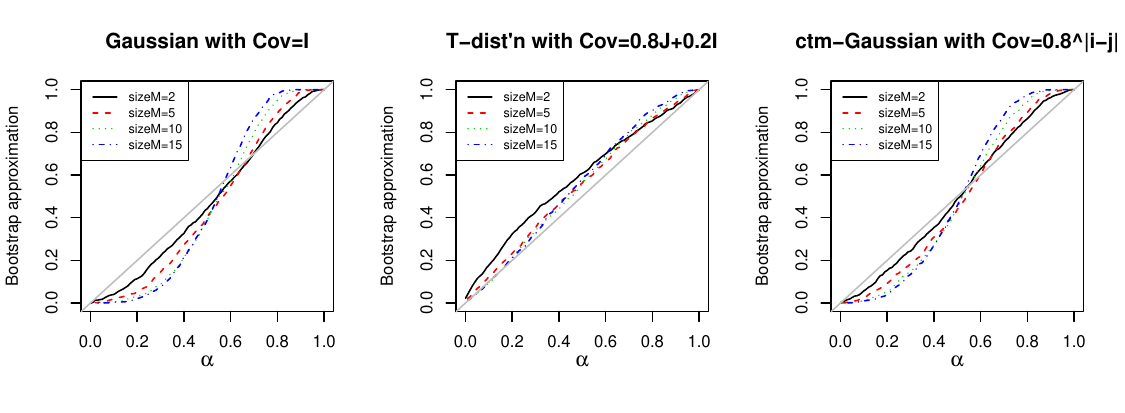} 
		\caption{Empirical rejection rate $\hat{R}(\alpha)$ in selected time-series data-generating schemes under $H_0$: (Left) Gaussian distribution with covariance structure I; (Middle) $t_6$ distribution with covariance structure II; (Right) ctm-Gaussian distribution with covariance structure III. (parameters: $n=500$, $p=600$, $\us=40$ and size $M = 2,5,10,15$) } 
		\label{fig: AlphaEg_TS} 
	\end{figure}

	\begin{figure}
		\centering
		\subfigure[]{
			\includegraphics[trim = 6 10 30 45, clip, width=0.45\textwidth]{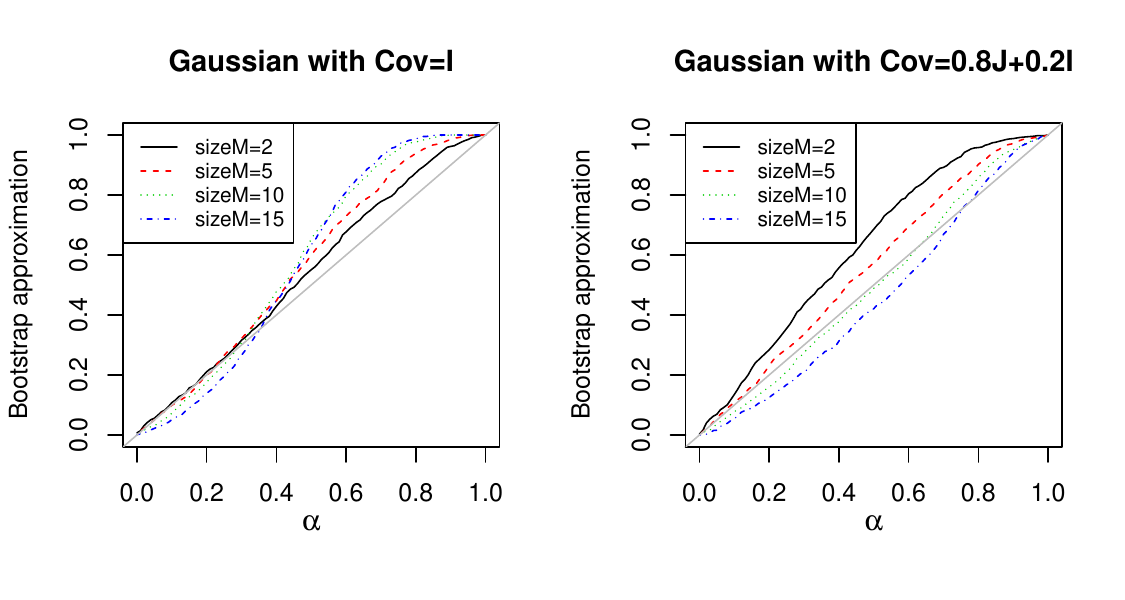}
			\label{fig:JirakTS_subfig:a}}
		\hspace{0.15in}
		\subfigure[]{
			\includegraphics[trim = 6 10 30 45, clip, width=0.45\textwidth]{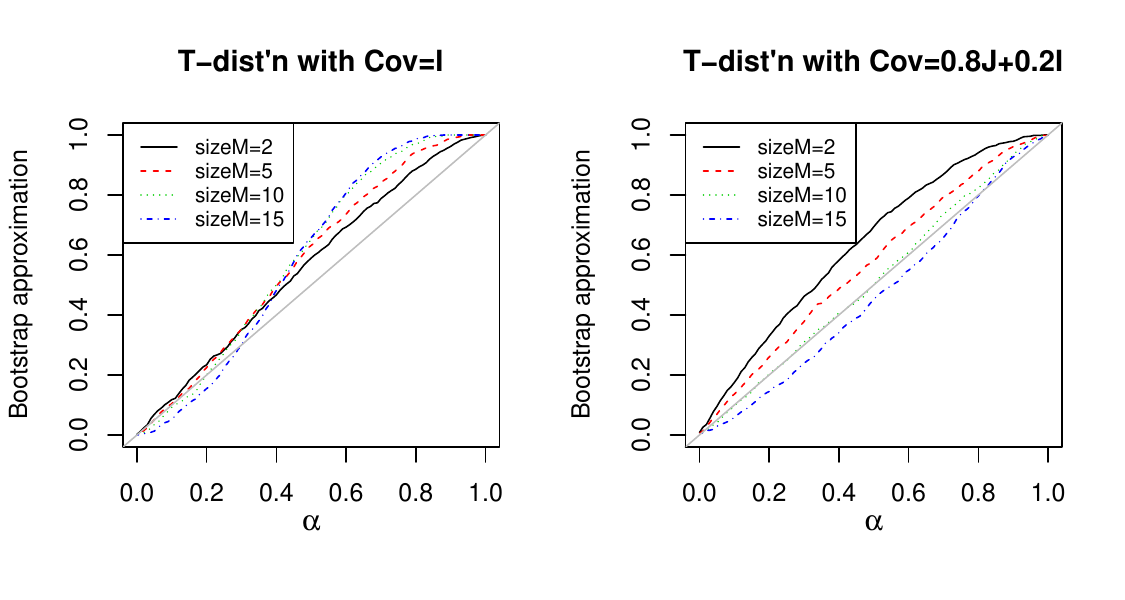}
			\label{fig:JirakTS_subfig:b}}
		\vspace{-0.1in}
		\caption{Empirical rejection rate $\hat{R}(\alpha)$ of the block-wise bootstrap test in \cite{jirak2015} under $H_0$: (a) Gaussian distributed $\eta_i$ with Covariance I (left) and II (right); (b) $T_6$ distributed $\eta_i$ with Covariance I (left) and II (right). (Parameters: $n=500, p=600, \us=40$, $\hat{\sigma}_h^2 = {\sigma}_h^2$ and $\xi_l=1$ is used in conditional long-run variance estimators $\hat{s}_h^2$ in bootstrap.) 
		}
		\label{fig:JirakTS} 
	\end{figure}
	
	We also examine our location estimators, \texttt{Inspect} \citep{wangsamworth2017} and \texttt{SBS} \citep{chofryzlewicz2015} for the temporal dependence case. Figure~\ref{fig:locTS}  provides RMSEs of the three algorithms for data from ctm-Gaussian with covariance III. Similar conclusions can be drawn as in temporal independence case. 
	The RMSEs are reported in Table~\ref{tab:RMSE_our}, \ref{tab:RMSE_Wang} and \ref{tab:RMSE_Cho}.

	\begin{figure}
		\centering
		\subfigure[]{
			\includegraphics[trim = 0 10 10 45, clip, width=0.3\textwidth]{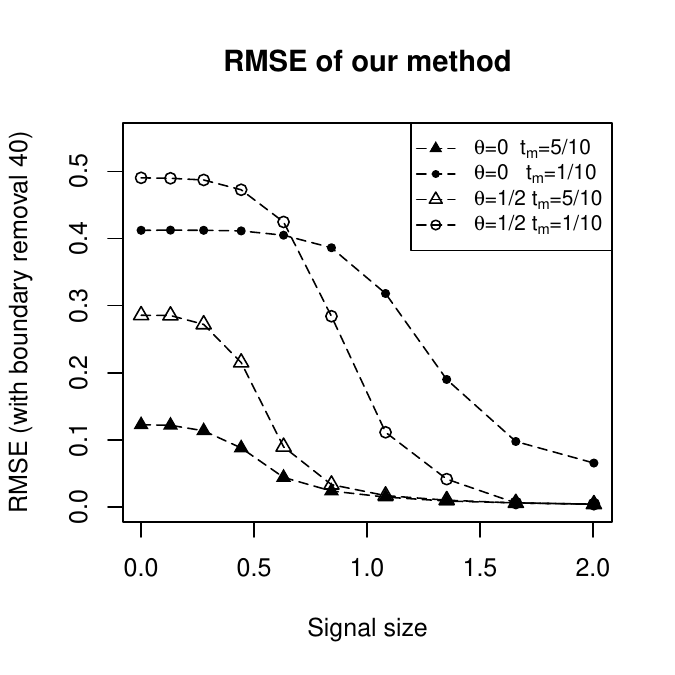}
			\label{fig:locTS_subfig:a}}
		\subfigure[]{
			\includegraphics[trim = 0 10 10 45, clip, width=0.3\textwidth]{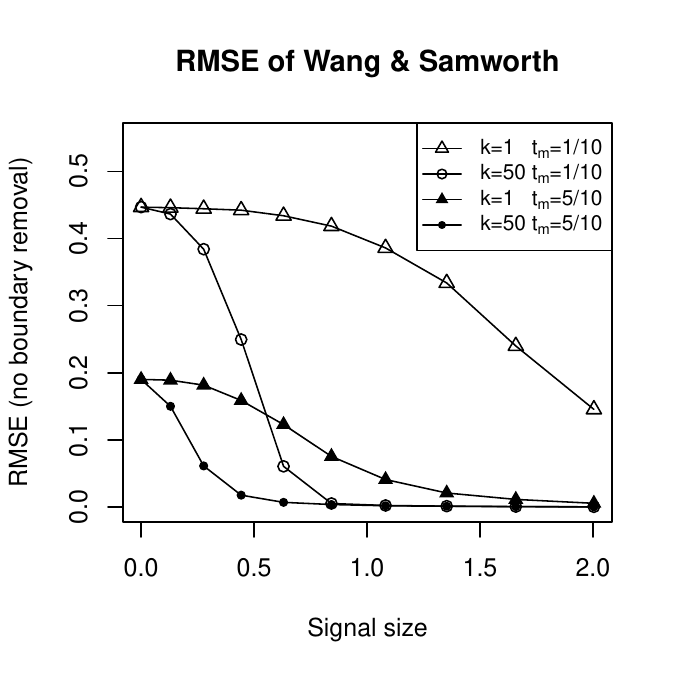}
			\label{fig:locTS_subfig:b}}
		\subfigure[]{
			\includegraphics[trim = 0 10 10 45, clip, width=0.3\textwidth]{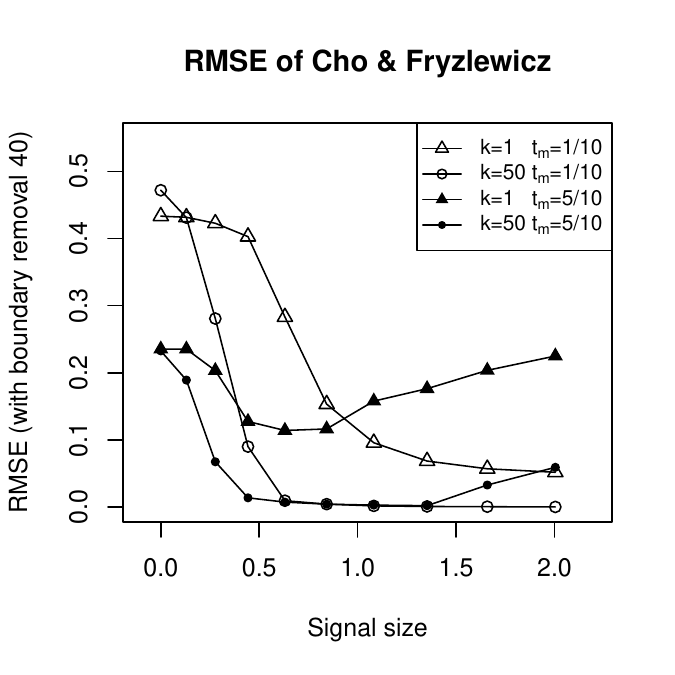}
			\label{fig:locTS_subfig:c}}
		\caption{Comparison of RMSEs: (a) our two estimators $t_{\hat{m}_0}$ and $t_{\hat{m}_{1/2}}$ with $k=1$ and $\us=40$; (b) \texttt{Inspect} with $k=1,50$ and $\us = 1$; (c) \texttt{SBS} with $k=1,50$ and $\us = 40$; . Parameters: $n=500, p=600, t_m = 1/10, 5/10$ and data are from ctm-Gaussian distribution with Covariance II.}
		\label{fig:locTS} 
	\end{figure}

	\subsection{Additional tables and figures}
	\label{subsec:additional_tabs_figs}
	
	\begin{table}
		\caption{\label{tab:power} Power report of our bootstrap CUSUM test for sparse alternative $k=1$. Parameters: $t_m=5/10, 3/10, 1/10$, $n=500, p=600, \us=40, \alpha=0.05$ and data are simulated from all combinations of distribution families and covariance dependence structures.}
		
		\centering
		\fbox{
		\begin{tabular}{l|lll|lll|lll}
			\multicolumn{1}{c|}{Our method}                 & \multicolumn{3}{c|}{Gaussian}               & \multicolumn{3}{c|}{$t_6$}                 & \multicolumn{3}{c}{ctm-Gaussian} \\ \cline{2-10} 
			\multicolumn{1}{c|}{$|\delta|_\infty=\delta_1$} & I     & II    & \multicolumn{1}{l|}{III}    & I     & II    & \multicolumn{1}{l|}{III}   & I         & II        & III       \\ \hline
			\multicolumn{10}{c}{$t_m=5/10$}                                                                                                                                                                                                                                                        \\ \hline
			0                          & 0.031                 & 0.038                  & 0.036 & 0.020                 & 0.044                  & 0.016 & 0.015                 & 0.042                  & 0.027                   \\
			0.13                       & 0.035                 & 0.043                  & 0.034 & 0.020                 & 0.044                  & 0.022 & 0.014                 & 0.047                  & 0.027                   \\
			0.28                       & 0.098                 & 0.295                  & 0.128 & 0.042                 & 0.137                  & 0.038 & 0.026                 & 0.126                  & 0.050                   \\
			0.44                       & 0.662                 & 0.884                  & 0.677 & 0.296                 & 0.559                  & 0.279 & 0.235                 & 0.567                  & 0.280                   \\
			0.63                       & 0.989                 & 1                      & 0.993 & 0.831                 & 0.939                  & 0.851 & 0.797                 & 0.958                  & 0.843                   \\
			0.84                       & 1                     & 1                      & 1     & 0.990                 & 1                      & 0.997 & 0.997                 & 1                      & 0.996                   \\ \hline
			\multicolumn{10}{c}{$t_m=3/10$}                                                                                                                                                \\ \hline
			0                          & 0.042 & 0.056 & 0.034  & 0.024 & 0.044 & 0.023 & 0.020     & 0.049     & 0.018     \\
			0.13                       & 0.043 & 0.060 & 0.034  & 0.022 & 0.047 & 0.027 & 0.021     & 0.047     & 0.018     \\
			0.28                       & 0.087 & 0.209 & 0.082  & 0.03  & 0.109 & 0.034 & 0.033     & 0.093     & 0.029     \\
			0.44                       & 0.502 & 0.756 & 0.513  & 0.181 & 0.477 & 0.192 & 0.186     & 0.431     & 0.187     \\
			0.63                       & 0.966 & 0.996 & 0.972  & 0.652 & 0.919 & 0.711 & 0.675     & 0.890     & 0.690     \\
			0.84                       & 1     & 1     & 1      & 0.981 & 0.997 & 0.979 & 0.977     & 0.999     & 0.987     \\
			1.08                       & 1     & 1     & 1      & 1     & 1     & 0.999 & 0.999     & 1         & 1         \\ \hline
			\multicolumn{10}{c}{$t_m=1/10$}                                                                                                                                                \\ \hline
			0                          & 0.039 & 0.046 & 0.037  & 0.023 & 0.052 & 0.032 & 0.018     & 0.058     & 0.017     \\
			0.13                       & 0.036 & 0.045 & 0.035  & 0.025 & 0.053 & 0.033 & 0.025     & 0.052     & 0.016     \\
			0.28                       & 0.047 & 0.070 & 0.0400 & 0.024 & 0.057 & 0.035 & 0.025     & 0.053     & 0.020     \\
			0.44                       & 0.094 & 0.253 & 0.091  & 0.036 & 0.121 & 0.049 & 0.031     & 0.108     & 0.032     \\
			0.63                       & 0.370 & 0.645 & 0.432  & 0.119 & 0.344 & 0.147 & 0.123     & 0.310     & 0.109     \\
			0.84                       & 0.861 & 0.948 & 0.861  & 0.426 & 0.75  & 0.503 & 0.418     & 0.707     & 0.406     \\
			1.08                       & 0.998 & 1     & 0.997  & 0.870 & 0.967 & 0.860 & 0.826     & 0.953     & 0.847     \\
			1.35                       & 1     & 1     & 1      & 0.991 & 0.998 & 0.989 & 0.988     & 0.998     & 0.992     \\ 
			
		\end{tabular}
	}
	\end{table}
	
	\begin{table}
		\caption{\label{tab:Jirak_power} Power report of $B_n$ in \cite{jirak2015} for sparse alternative $k=1$. Parameters: $t_m=5/10, 3/10, 1/10$, $n=500, p=600, \us=40, \alpha=0.05$ and data are simulated from all combinations of distribution families and covariance structures.}
		\centering
		\fbox{
		\begin{tabular}{clll|lll|lll}
			\multicolumn{1}{c|}{$B_n$}                      & \multicolumn{3}{c|}{Gaussian}                                               & \multicolumn{3}{c|}{$t_6$}                                                  & \multicolumn{3}{c}{ctm-Gaussian}                                        \\ \cline{2-10} 
			\multicolumn{1}{c|}{$|\delta|_\infty=\delta_1$} & \multicolumn{1}{c}{I} & \multicolumn{1}{c}{II} & \multicolumn{1}{c|}{III}   & \multicolumn{1}{c}{I} & \multicolumn{1}{c}{II} & \multicolumn{1}{c|}{III}   & \multicolumn{1}{c}{I} & \multicolumn{1}{c}{II} & \multicolumn{1}{c}{III} \\ \hline
			\multicolumn{10}{c}{$t_m=5/10$}                                                                                                                                                                                                                                                        \\ \hline
			\multicolumn{1}{c|}{0}     & 0.053    & 0.062 & 0.043 & 0.070 & 0.049 & 0.050 & 0.061        & 0.053 & 0.060 \\
			\multicolumn{1}{c|}{0.13}  & 0.063    & 0.077 & 0.055 & 0.068 & 0.049 & 0.051 & 0.058        & 0.058 & 0.071 \\
			\multicolumn{1}{c|}{0.28}  & 0.219    & 0.418 & 0.210 & 0.147 & 0.219 & 0.110 & 0.111        & 0.210 & 0.109 \\
			\multicolumn{1}{c|}{0.44}  & 0.821    & 0.946 & 0.809 & 0.502 & 0.750 & 0.504 & 0.433        & 0.706 & 0.429 \\
			\multicolumn{1}{c|}{0.84}  & 1        & 1     & 1     & 1     & 1     & 1     & 0.998        & 1     & 1     \\ \hline
			\multicolumn{10}{c}{$t_m=3/10$}                                                                                                                                                                                                                                                        \\ \hline
			\multicolumn{1}{c|}{0}     & 0.061    & 0.052 & 0.053 & 0.056 & 0.053 & 0.061 & 0.067        & 0.056 & 0.056 \\
			\multicolumn{1}{c|}{0.13}  & 0.053    & 0.064 & 0.063 & 0.061 & 0.055 & 0.061 & 0.072        & 0.059 & 0.063 \\
			\multicolumn{1}{c|}{0.28}  & 0.110    & 0.232 & 0.110 & 0.081 & 0.132 & 0.073 & 0.076        & 0.121 & 0.080 \\
			\multicolumn{1}{c|}{0.44}  & 0.553    & 0.828 & 0.561 & 0.263 & 0.537 & 0.257 & 0.245        & 0.469 & 0.257 \\
			\multicolumn{1}{c|}{0.84}  & 1        & 1     & 1     & 0.996 & 1     & 0.996 & 0.987        & 1     & 0.988 \\ \hline
			\multicolumn{10}{c}{$t_m=1/10$}                                                                                                                                                                                                                                                        \\ \hline
			\multicolumn{1}{c|}{0}     & 0.054    & 0.073 & 0.063 & 0.066 & 0.060 & 0.059 & 0.056        & 0.057 & 0.063 \\
			\multicolumn{1}{c|}{0.13}  & 0.062    & 0.069 & 0.062 & 0.070 & 0.061 & 0.055 & 0.049        & 0.058 & 0.069 \\
			\multicolumn{1}{c|}{0.44}  & 0.058    & 0.084 & 0.061 & 0.073 & 0.068 & 0.065 & 0.050        & 0.054 & 0.071 \\
			\multicolumn{1}{c|}{0.84}  & 0.189    & 0.520 & 0.243 & 0.111 & 0.246 & 0.101 & 0.079        & 0.229 & 0.098 \\
			\multicolumn{1}{c|}{1.08}  & 0.684    & 0.925 & 0.730 & 0.264 & 0.615 & 0.302 & 0.210        & 0.556 & 0.252 \\
			\multicolumn{1}{c|}{1.35}  & 0.986    & 1     & 0.985 & 0.703 & 0.940 & 0.724 & 0.620        & 0.903 & 0.670 \\
			\multicolumn{1}{c|}{1.66}  & 1        & 1     & 1     & 0.99  & 1     & 0.986 & 0.962        & 0.997 & 0.968 \\ 
		\end{tabular}
	}
	\end{table}
	
	\begin{table}
		\caption{\label{tab:Psi_power} Power report of $\psi$ in \cite{enikeevaharchaoui2014} for both sparse $k=1$ and dense $k=50$  alternatives. Parameters: $\alpha=0.05$, $t_m=5/10, 3/10, 1/10$, $n=500, p=600$ and data are simulated from Gaussian distribution with Covariance~I.}
		\fbox{
		\begin{tabular}{l|lll|lll}
			\multicolumn{1}{c|}{$\psi$}            & \multicolumn{3}{c|}{$\us=1$}                              & \multicolumn{3}{c}{$\us=40$}        \\ \cline{2-7} 
			\multicolumn{1}{c|}{$|\delta|_\infty$} & $t_m=5/10$ & $t_m=3/10$ & \multicolumn{1}{l|}{$t_m=1/10$} & $t_m=5/10$ & $t_m=3/10$ & $t_m=1/10$ \\ \hline
			\multicolumn{7}{c}{Sparse $H_1$: Gaussian (I) }                                                                                                 \\ \hline
			0                 & 0.107      & 0.101      & 0.116      & 0.074      & 0.059      & 0.701      \\
			0.13              & 0.107      & 0.105      & 0.119      & 0.075      & 0.062      & 0.705      \\
			0.28              & 0.126      & 0.115      & 0.125      & 0.096      & 0.075      & 0.710      \\
			0.44              & 0.177      & 0.154      & 0.136      & 0.130      & 0.106      & 0.720      \\
			0.63            & 0.312      & 0.248      & 0.157      & 0.265      & 0.215      & 0.735      \\
			0.84             & 0.625      & 0.483      & 0.195      & 0.604      & 0.478      & 0.764      \\
			1.08              & 0.943      & 0.839      & 0.307      & 0.941      & 0.850      & 0.800      \\
			1.35             & 1          & 0.997      & 0.534      & 1          & 0.997      & 0.843      \\
			1.66              & 1          & 1          & 0.846      & 1          & 1          & 0.923      \\
			2.00                & 1          & 1          & 0.992      & 1          & 1          & 0.990      \\ \hline
			\multicolumn{7}{c}{Dense $H_1$: Gaussian (I) }                                                                                                  \\ \hline
			0                 & 0.110      & 0.116      & 0.120      & 0.055      & 0.067      & 0.074      \\
			0.13              & 0.755      & 0.608      & 0.233   & 0.735      & 0.573      & 0.169      \\
			0.28              & 1          & 1          & 0.977     & 1          & 1          & 0.973      \\ 
			
		\end{tabular}
	}
	\end{table}

	\begin{table}
		\caption{\label{tab:RMSE_our} RMSEs of our estimators $\hat{m}_\theta$ for $\theta = 0, 1/2$. Parameters: $t_m = 5/10, 1/10, \us = 1,40, n=500, p=600$, and data are temporally independent/dependent from selected distributions and covariance structures.}
		\fbox{
		\begin{tabular}{l|cc|cc|cc||cc}
			$ $    & \multicolumn{6}{c||}{Temporally Independent} & \multicolumn{2}{c}{Dependent} \\ 
			$ $    & \multicolumn{2}{c}{Gaussian (I)} & \multicolumn{2}{c}{$t_6$ (II)} & \multicolumn{2}{c||}{ctm-Gaussian (III)} & \multicolumn{2}{c}{TS: ctm-Gaussian (III)} \\ \cline{2-9}
			$|\delta|_\infty=\delta_1$ & $\theta=0$     & $\theta=1/2$     & $\theta=0$    & $\theta=1/2$    & $\theta=0$        & $\theta=1/2$        & $\theta=0$          & $\theta=1/2$          \\ \hline
			\multicolumn{9}{c}{$\us=1,t_m=5/10$}                                                                                                                                 \\ \hline
			0          & 0.116          & 0.382            & 0.167         & 0.407           & 0.124             & 0.431               & 0.124               & 0.429                 \\
			0.13       & 0.115          & 0.380            & 0.164         & 0.405           & 0.124             & 0.430               & 0.124               & 0.429                 \\
			0.28       & 0.101          & 0.346            & 0.118         & 0.354           & 0.115             & 0.422               & 0.116               & 0.423                 \\
			0.44       & 0.054          & 0.175            & 0.067         & 0.229           & 0.082             & 0.350               & 0.088               & 0.362                 \\
			0.63       & 0.026          & 0.033            & 0.037         & 0.099           & 0.039             & 0.178               & 0.044               & 0.207                 \\
			0.84       & 0.015          & 0.015            & 0.020         & 0.035           & 0.020             & 0.054               & 0.024               & 0.060                 \\
			1.35       & 0.005          & 0.005            & 0.008         & 0.009           & 0.008             & 0.010               & 0.011               & 0.011                 \\
			2.00       & 0.003          & 0.002            & 0.004         & 0.004           & 0.004             & 0.004               & 0.004               & 0.004                 \\ \hline 
			\multicolumn{9}{c}{$\us=1,t_m=1/10$}                                                                                                                                   \\ \hline
			0          & 0.416          & 0.545            & 0.436         & 0.552           & 0.423             & 0.575               & 0.416               & 0.573                 \\
			0.13       & 0.416          & 0.544            & 0.435         & 0.550           & 0.423             & 0.575               & 0.416               & 0.572                 \\
			0.28       & 0.416          & 0.539            & 0.433         & 0.539           & 0.422             & 0.573               & 0.416               & 0.571                 \\
			0.44       & 0.413          & 0.498            & 0.414         & 0.488           & 0.421             & 0.567               & 0.414               & 0.562                 \\
			0.63       & 0.394          & 0.353            & 0.349         & 0.350           & 0.415             & 0.525               & 0.410               & 0.525                 \\
			0.84       & 0.318          & 0.133            & 0.262         & 0.187           & 0.387             & 0.417               & 0.388               & 0.433                 \\
			1.35       & 0.081          & 0.006            & 0.116         & 0.032           & 0.178             & 0.065               & 0.193               & 0.112                 \\
			2.00       & 0.039          & 0.002            & 0.058         & 0.005           & 0.060             & 0.004               & 0.062               & 0.005                 \\ \hline \hline
			\multicolumn{9}{c}{$\us=40,t_m=5/10$}                                                                                                                                 \\ \hline
			0          & 0.119 & 0.280 & 0.168 & 0.296 & 0.124 & 0.290 & 0.123 & 0.286 \\
			0.13       & 0.118 & 0.279 & 0.162 & 0.292 & 0.124 & 0.290 & 0.122 & 0.285 \\
			0.28       & 0.103 & 0.245 & 0.116 & 0.237 & 0.115 & 0.276 & 0.114 & 0.272 \\
			0.44       & 0.055 & 0.126 & 0.068 & 0.132 & 0.077 & 0.204 & 0.088 & 0.215 \\
			0.63       & 0.022 & 0.033 & 0.037 & 0.066 & 0.035 & 0.082 & 0.044 & 0.090 \\
			0.84       & 0.013 & 0.017 & 0.020 & 0.026 & 0.019 & 0.028 & 0.024 & 0.034 \\
			1.35       & 0.005 & 0.006 & 0.008 & 0.009 & 0.008 & 0.009 & 0.009 & 0.010 \\
			2.00       & 0.002 & 0.002 & 0.003 & 0.004 & 0.004 & 0.004 & 0.005 & 0.004 \\ \hline
			\multicolumn{9}{c}{$\us=40,t_m=1/10$}                                                                                                                                   \\ \hline
			0          & 0.422 & 0.484 & 0.434 & 0.503 & 0.426 & 0.505 & 0.412 & 0.490 \\
			0.13       & 0.422 & 0.483 & 0.433 & 0.502 & 0.425 & 0.505 & 0.412 & 0.490 \\
			0.28       & 0.422 & 0.476 & 0.430 & 0.489 & 0.425 & 0.503 & 0.412 & 0.487 \\
			0.44       & 0.418 & 0.431 & 0.409 & 0.400 & 0.424 & 0.483 & 0.411 & 0.473 \\
			0.63       & 0.396 & 0.306 & 0.355 & 0.257 & 0.415 & 0.407 & 0.405 & 0.425 \\
			0.84       & 0.326 & 0.125 & 0.268 & 0.115 & 0.383 & 0.270 & 0.386 & 0.284 \\
			1.35       & 0.072 & 0.005 & 0.116 & 0.010 & 0.175 & 0.030 & 0.190 & 0.042 \\
			2.00       & 0.039 & 0.003 & 0.057 & 0.004 & 0.062 & 0.005 & 0.066 & 0.005 \\ 
		\end{tabular}
	}
	\end{table}

	\begin{table}
		\caption{\label{tab:RMSE_Wang} RMSEs of \texttt{Inspect} \citep{wangsamworth2017}. Parameters: $t_m = 5/10, 3/10, 1/10, k = 1,50, n=500, p=600$, and data are temporally independent/dependent from selected distributions and covariance structures.}
		\fbox{
		\begin{tabular}{l|ll|ll|ll||cc}
			$ $    & \multicolumn{6}{c||}{Temporally Independent} & \multicolumn{2}{c}{Dependent} \\ 
			$ $    & \multicolumn{2}{c}{Gaussian (I)} & \multicolumn{2}{c}{$t_6$ (II)} & \multicolumn{2}{c||}{ctm-Gaussian (III)} & \multicolumn{2}{c}{TS: ctm-Gaussian (III)} \\ \cline{2-9}
			$|\delta|_\infty$ & $k=1$     & $k=50$     & $k=1$    & $k=50$    & $k=1$        & $k=50$        & $k=1$          & $k=50$          \\ \hline
			\multicolumn{9}{c}{$t_m=5/10$}                                                                                                                                 \\ \hline
			0          & 0.166 & 0.166 & 0.364 & 0.364 & 0.206 & 0.206 & 0.190 & 0.190 \\
			0.13       & 0.164 & 0.079 & 0.363 & 0.340 & 0.206 & 0.167 & 0.189 & 0.150 \\
			0.28       & 0.139 & 0.005 & 0.354 & 0.251 & 0.195 & 0.061 & 0.182 & 0.061 \\
			0.44       & 0.080 & 0.001 & 0.315 & 0.132 & 0.167 & 0.018 & 0.159 & 0.018 \\
			0.63       & 0.034 & 0.000 & 0.262 & 0.061 & 0.120 & 0.007 & 0.123 & 0.007 \\
			0.84       & 0.018 & 0.000 & 0.211 & 0.027 & 0.073 & 0.004 & 0.076 & 0.004 \\
			1.35       & 0.006 & 0.000 & 0.132 & 0.008 & 0.015 & 0.001 & 0.021 & 0.001 \\
			2.00       & 0.003 & 0.000 & 0.075 & 0.003 & 0.005 & 0.001 & 0.006 & 0.001 \\ 
			\hline
			\multicolumn{9}{c}{$t_m=3/10$}                     \\ \hline
			0    & 0.262 & 0.262 & 0.419 & 0.419 & 0.287 & 0.287 & 0.277 & 0.277 \\
			0.13 & 0.259 & 0.143 & 0.418 & 0.391 & 0.286 & 0.246 & 0.277 & 0.241 \\
			0.28 & 0.233 & 0.007 & 0.408 & 0.294 & 0.279 & 0.104 & 0.272 & 0.108 \\
			0.44 & 0.146 & 0.001 & 0.371 & 0.163 & 0.249 & 0.019 & 0.257 & 0.022 \\
			0.63 & 0.054 & 0     & 0.311 & 0.075 & 0.202 & 0.007 & 0.214 & 0.007 \\
			0.84 & 0.019 & 0     & 0.254 & 0.042 & 0.121 & 0.004 & 0.145 & 0.004 \\
			1.35 & 0.006 & 0     & 0.171 & 0.007 & 0.029 & 0.001 & 0.037 & 0.001 \\
			2.00 & 0.002 & 0     & 0.099 & 0.003 & 0.006 & 0.001 & 0.007 & 0.001 \\
			\hline
			\multicolumn{9}{c}{$t_m=1/10$}                                                                                                                                 \\ \hline
			0          & 0.425 & 0.425 & 0.555 & 0.555 & 0.446 & 0.446 & 0.447 & 0.447 \\
			0.13       & 0.424 & 0.394 & 0.555 & 0.544 & 0.445 & 0.434 & 0.446 & 0.436 \\
			0.28       & 0.420 & 0.211 & 0.555 & 0.504 & 0.443 & 0.374 & 0.444 & 0.384 \\
			0.44       & 0.405 & 0.004 & 0.548 & 0.424 & 0.439 & 0.224 & 0.442 & 0.250 \\
			0.63       & 0.351 & 0.001 & 0.527 & 0.309 & 0.428 & 0.055 & 0.434 & 0.061 \\
			0.84       & 0.255 & 0.000 & 0.487 & 0.197 & 0.407 & 0.006 & 0.418 & 0.006 \\
			1.35       & 0.031 & 0.000 & 0.390 & 0.047 & 0.289 & 0.002 & 0.334 & 0.002 \\
			2.00       & 0.003 & 0.000 & 0.293 & 0.006 & 0.116 & 0.001 & 0.146 & 0.001 \\ 
		\end{tabular}
	}
	\end{table}

	\begin{table}
		\caption{\label{tab:RMSE_Cho} RMSEs of \texttt{SBS} \citep{chofryzlewicz2015}. Parameters: $t_m = 5/10, 1/10, k = 1,50, n=500, p=600$, and data are temporally independent/dependent from selected distributions and covariance structures.}
		\fbox{
		\begin{tabular}{l|cc|cc|cc||cc}
			$ $    & \multicolumn{6}{c||}{Temporally Independent} & \multicolumn{2}{c}{Dependent} \\ 
			$ $    & \multicolumn{2}{c}{Gaussian (I)} & \multicolumn{2}{c}{$t_6$ (II)} & \multicolumn{2}{c||}{ctm-Gaussian (III)} & \multicolumn{2}{c}{TS: ctm-Gaussian (III)} \\ \cline{2-9}
			$|\delta|_\infty$ & $k=1$     & $k=50$     & $k=1$    & $k=50$    & $k=1$        & $k=50$        & $k=1$          & $k=50$          \\ \hline
			\multicolumn{9}{c}{$t_m=5/10$}                                                                                                                                 \\ \hline
			0                 & 0.207                & 0.221  & 0.246      & 0.240  & 0.249              & 0.243  & 0.243                  & 0.229  \\
			0.13              & 0.201                & 0.113  & 0.246      & 0.205  & 0.244              & 0.210  & 0.237                  & 0.208  \\
			0.28              & 0.187                & 0.011  & 0.172      & 0.134  & 0.224              & 0.086  & 0.234                  & 0.098  \\
			0.44              & 0.131                & 0.004  & 0.107      & 0.072  & 0.174              & 0.017  & 0.185                  & 0.024  \\
			0.63              & 0.116                & 0.003  & 0.072      & 0.036  & 0.137              & 0.008  & 0.137                  & 0.009  \\
			0.84              & 0.119                & 0.002  & 0.061      & 0.028  & 0.130              & 0.006  & 0.123                  & 0.006  \\
			1.35              & 0.126                & 0.001  & 0.066      & 0.023  & 0.153              & 0.003  & 0.135                  & 0.003  \\
			2                 & 0.140                & 0.001  & 0.069      & 0.032  & 0.152              & 0.001  & 0.161                  & 0.001  \\
			\hline
			\multicolumn{9}{c}{$t_m=1/10$}                                                                                                                                 \\ \hline
			0                 & 0.379                & 0.381  & 0.433      & 0.431  & 0.440              & 0.430  & 0.430                  & 0.419  \\
			0.13              & 0.392                & 0.316  & 0.444      & 0.400  & 0.436              & 0.374  & 0.409                  & 0.416  \\
			0.28              & 0.365                & 0.099  & 0.426      & 0.321  & 0.415              & 0.326  & 0.414                  & 0.314  \\
			0.44              & 0.355                & 0.005  & 0.360      & 0.194  & 0.405              & 0.126  & 0.394                  & 0.159  \\
			0.63              & 0.267                & 0.002  & 0.202      & 0.106  & 0.348              & 0.026  & 0.363                  & 0.017  \\
			0.84              & 0.164                & 0.001  & 0.092      & 0.043  & 0.272              & 0.005  & 0.278                  & 0.005  \\
			1.35              & 0.056                & 0.000  & 0.034      & 0.028  & 0.105              & 0.002  & 0.105                  & 0.002  \\
			2                 & 0.040                & 0.000  & 0.036      & 0.023  & 0.078              & 0.001  & 0.061                  & 0.001 \\
		\end{tabular}
	}
	\end{table}

	\begin{figure}
		\centering
		\includegraphics[trim=190 0 0 35,clip,width = 0.75\textwidth]{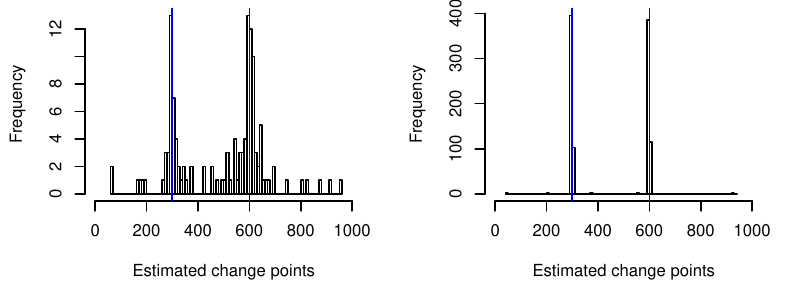}
		\caption{Histograms of estimated $m_i$'s over 500 simulation under multiple change point setup $(m_1, m_2) = (300, 600)$ using \texttt{BABS}: (Left) $\delta=0.317$; (Right) $\delta= 2.004$. Parameters: $n = 1000,p = 1200, \us = 40, \alpha = 0.05$ and data are from $t_6$ distribution with Covariance~III.}
		\label{fig:mcp_identification_2}
	\end{figure}

	\begin{figure}
		\centering
		\includegraphics[trim=190 0 0 35,clip,width = 0.75\textwidth]{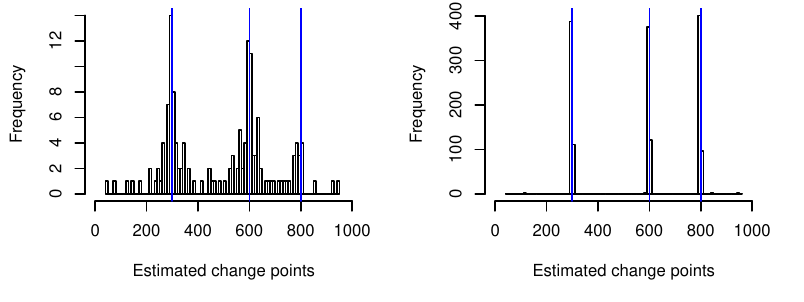}
		\caption{Histograms of estimated $m_i$'s over 500 simulation under multiple change point setup $(m_1, m_2, m_3) = (300, 600, 800)$ using \texttt{BABS}: (Left) $\delta=0.317$; (Right) $\delta= 2.004$. Parameters: $n = 1000,p = 1200, \us = 40, \alpha = 0.05$ and data are from $t_6$ distribution with Covariance~III.}
		\label{fig:mcp_identification_3}
	\end{figure}
	
	\begin{figure}
		\centering
		\subfigure[]{
			\includegraphics[trim=190 0 190 35,clip, width=0.35\textwidth]{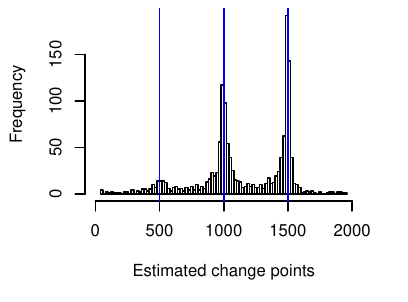}
			\includegraphics[trim=190 0 190 35,clip, width=0.35\textwidth]{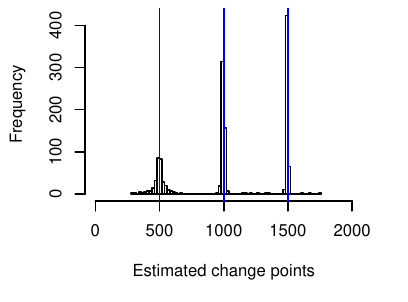}
			\label{fig:loc_mcp_subfig:a}}
		\hspace{0.2in}
		\subfigure[]{
			\includegraphics[trim=190 0 190 35,clip, width=0.35\textwidth]{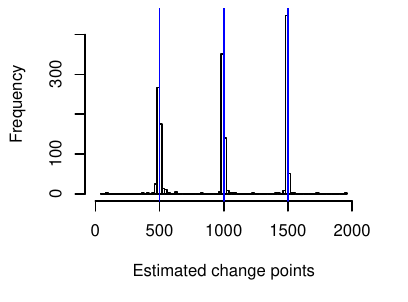}
			\includegraphics[trim=190 0 190 35,clip, width=0.35\textwidth]{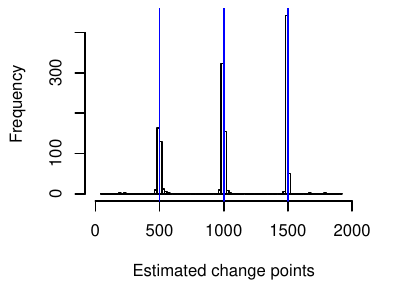}
			\label{fig:loc_mcp_subfig:b}}
		\caption{Histograms of estimated $m_i$'s over 500 simulation under complete-overlap multiple change point setup $(m_1, m_2, m_3) = (500,1000,1500), (|\delta_{n}^{(1)}|_2 , |\delta_{n}^{(2)}|_2 , |\delta_{n}^{(3)}|_2) = (0.6,1.2,1.8)$ using \texttt{BABS} algorithm with $\alpha = 0.05, B=200, \us = 40$ (left) and \texttt{Inspect} with default setting in R package \texttt{InspectChangepoint}: (a) dense signal of $k=40$; (b) sparse signal of $k=1$. Parameters: $n=2000, p=200$.}
		\label{fig:compare_binseg_inspect} 
		\vspace{-0.1in}
	\end{figure}

	\begin{figure}
		\centering 
		\includegraphics[trim=0 0 0 40,clip, width= 2.1in]{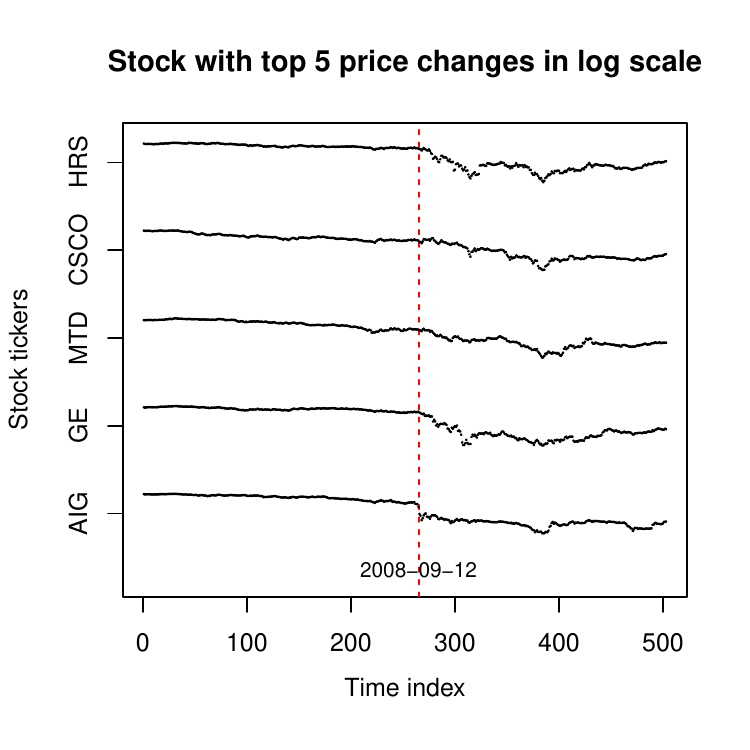} 
		\caption{Top 5 stocks with financial crash in the stock return data on log-scale.}
		\label{fig: Realdata_finance} 
	\end{figure}
	
%

\end{document}